\documentclass[reqno]{amsart}
\usepackage{graphicx}\usepackage{amsfonts} \usepackage{dsfont}
\usepackage{amsmath,amssymb}
\usepackage[rgb]{xcolor}
\usepackage{hyperref}
\usepackage[inline]{enumitem}
\vfuzz2pt 
\hfuzz2pt 
\newtheorem{thm}{Theorem}[section]

\newtheorem{lem}[thm]{Lemma}
\newtheorem{prop}[thm]{Proposition}
\newtheorem{que}{Question}
\theoremstyle{definition}

\theoremstyle{remark}
\newtheorem{rem}[thm]{Remark}
\numberwithin{equation}{section}

\newcommand{\abs}[1]{\left\vert#1\right\vert}

\newcommand{\realR}{\mathbb{R}}

\newcommand{\bigO}{\mathcal{O}}
\renewcommand{\Re}{\operatorname{Re}}
\renewcommand{\Im}{\operatorname{Im}}

\begin{document}
\title[Phase Transitions]{Phase transitions  for infinite products of large non-Hermitian random matrices}

\author{Dang-Zheng Liu}
\address{Key Laboratory of Wu Wen-Tsun Mathematics, CAS, School of Mathematical Sciences, University of Science and Technology of China, Hefei 230026, P.R.~China}
\email{dzliu@ustc.edu.cn}

\author{Yanhui Wang}
\address{School of Mathematics and Statistics, Henan University, Henan, 475001, P.R.~China}
\email{yhwang@henu.edu.cn}

\keywords{Products of random matrices, Ginibre matrices,  truncated unitary matrices, spherical ensembles, Ginibre statistics, Phase transition}

\begin{abstract}
Products of $M$ i.i.d. non-Hermitian random  matrices of size $N \times N$ relate  Gaussian fluctuation of  Lyapunov and stability exponents in dynamical systems (finite $N$ and large $M$) to local eigenvalue  universality in random matrix theory (finite $M$ and large $N$).
 The remaining task is to study  local eigenvalue  statistics as  $M$ and $N$  tend to infinity simultaneously,  which  lies   at the heart of understanding  two kinds of universal patterns.  For   products of  i.i.d. complex Ginibre matrices, truncated unitary matrices and spherical ensembles,  as $M+N\to \infty$ we prove  that  local  statistics   undergoes a transition when  the relative  ratio $M/N$ changes from $0$ to $\infty$:  Ginibre statistics when $M/N \to  0$, normality  when $M/N\to \infty$, and new critical phenomena when  $M/N\to \gamma \in  (0, \infty)$.  

\end{abstract}
\maketitle
\tableofcontents
\section{Introduction and main results}

\subsection{Lyapunov and stability exponents}


The study on  products of random matrices dates back  at least to  the seminal articles by Bellman \cite{Bellman54} in 1954 and by Furstenberg and Kesten \cite{Furstenberg-Kesten60} in 1960.  The asymptotic results about  products  of random matrices discovered  by Furstenberg and Kesten \cite{Furstenberg-Kesten60}, as generalizations  of  the law of large numbers and central limit  theorem  in probability theory,   
have initiated great interest in the area, see \cite{Cohen-Kesten-Newman86} and references therein for some early articles.   Recently,  a lot of  important applications are found  in Schr\"{o}dinger operator theory \cite{Bougerol-Lacroix85}, in statistical physics relating to disordered and chaotic dynamical systems \cite{Crisanti-Paladin-Vulpiani93}, in wireless communication \cite{Tulino-Verdu04} and in free probability theory \cite{Mingo-Speicher17}.

%
  Specifically, let  $X_{1}, X_2, \dotsc, X_{M}$ be i.i.d. random matrices of size ~$N\times N$ and  their product
  \begin{equation} \label{Mproduct}
  \Pi_{M}^{}=X_{M}\cdots X_{2}X_{1}, \end{equation}
   if $\mathbb{E}( \log^{+}\|X_{1}\|)<\infty$,
  then  the celebrated result \cite[Theorem 2]{Furstenberg-Kesten60}    shows that for any fixed $N$ the largest Lyapunov exponent, defined as
\begin{equation} \label{LE2}
  \lambda_{1}:=\lim_{M\rightarrow \infty} \frac{1}{M} \log\|\Pi_{M}\|,
\end{equation}
exists with probability $1$.  Furthermore,  by the Oseledets multiplicative ergodic theorem  (see \cite{Oseledec68} or  \cite{Raghunathan79}) all Lyapunov exponents (also called the Lyapunov spectrum) $ \lambda_{k}:=\lim_{M\rightarrow \infty}\lambda_{k,M}$ exist   with probability $1$ where the finite-time  Lyapunov exponents
\begin{equation} \label{LE3}
  \lambda_{k,M}:= \frac{1}{2M}\log\!\left( k\mathrm{^{th}\,   largest\,  eigenvalue\,   of\, } \Pi_{M}^{*}\Pi_{M}\right),\quad k=1,2, \ldots,N.
\end{equation}
Lyapunov exponents    play a key role   in   dynamical systems, products of random matrices and random maps, and spectral theory of random Schr\"{o}dinger operators;  see e.g.~\cite{
 Viana14,Wilkinson17}.
Particularly,   when all $X_j$ are independent real/complex Ginibre matrices, that is, with i.i.d.~standard real/complex Gaussian entries,     Newman \cite{Newman86} (real case, $\beta=1$) and Forrester \cite{Forrester13, Forrester15} (real and complex cases with $\beta=1, 2$ respectively)  have calculated  the Lyapunov spectrum as
\begin{equation}
  \lambda_{k} = \frac{1}{2} \Big( \log \frac{2}{\beta} +  \psi\big(\frac{\beta}{2}(N-k+1)\big)\Big), \quad k=1, \dotsc, N,
\end{equation}
where $\psi(z)=\Gamma'(z)/\Gamma(z)$ denotes the digamma function.  See  also \cite{Akemann-Burda-Kieburg14,Ipsen15,Kargin14,Reddy16}.

As  mentioned previously,   the Lyapunov exponents are defined in terms of singular values of  the product $\Pi_M$. But for complex  eigenvalues $z_1, \ldots,z_N$ of  $\Pi_M$ with $|z_1|\geq |z_2|\geq \cdots \geq  |z_N|$,  the finite-time stability exponents are defined as $  \log |z_k|^{1/M}$. Correspondingly,
 the  stability exponents refer to \begin{equation} \label{SE}
  \mu_{k}:=\lim_{M\rightarrow \infty} \frac{1}{M}  \log |z_k|,\quad k=1,2, \ldots,N,
\end{equation}
if they exist   with probability $1$.   This notion
was  first introduced  in the setting of dynamical systems in \cite{Gol} and therein
the question of the asymptotic equality between the finite-time Lyapunov and stability
exponents was  investigated (see also \cite[p.21]{Crisanti-Paladin-Vulpiani93}).
 In the recent articles \cite{Akemann-Burda-Kieburg14,Ipsen15, Forrester15}, for some special  cases that  $X_k$'s in \eqref{Mproduct}  are  real/complex Ginibre matrices, or truncated unitary matrices, the  question can be answered based on exact  asymptotic analysis of the joint eigenvalue  distribution of  $\Pi_M$.   Actually, Guivarc'h in an earlier article  \cite[Theorem 8]{Gui}  has verified   the equality of  Lyapunov  and stability exponents in a more general setting; see also  \cite {Reddy16}   for  a different derivation   in products of  isotropic random matrices.

 As to fluctuations of finite-time  Lyapunov  and stability exponents,   for any fixed $N$ and as $M\rightarrow \infty$, Akemann, Burda and Kieburg proved in \cite{Akemann-Burda-Kieburg14} that $N$  finite-time stability exponents  of  $\Pi_M$ are asymptotically independent Gaussian random variables.  Recently, Reddy has proved the Gaussian fluctuations for products of  isotropic random matrices; see  \cite[Theorem 4.1]{Reddy16}.

\subsection{Universality of non-Hermitian random matrices}
Unlike  the Lyapunov and stability exponents, which are studied in    the situation that  $M \to \infty$  and $N$ is kept fixed,
recently great  interest  in products of   random matrices has been    
   in the  opposite situation that $N \to \infty$  and $M$ is kept fixed.  See \cite{Akemann-Ipsen15, burda2013free} and references therein.

  Historically, the study of  a single non-Hermitian random matrix ($M=1$)  was first initiated  by  Ginibre  \cite{ginibre1965statistical}  for random matrices with i.i.d.   real, complex and quaternion Gaussian entries, and then was extended to i.i.d.  case.   On a macroscopic  level,     the limiting   empirical spectral distribution of  a non-Hermitian  random  matrix  with  i.i.d.   entries  under the certain  moment assumptions is governed by   the famous \textit{circular law}; see   e.g. Bai \cite{bai1997circular},  Girko \cite{girko1985circular},   G\"{o}tze and Tikhomirov \cite{gotze2010circular}, Pan and Zhou  \cite{pan2010circular},  and Tao and Vu \cite{tao2008random,tao2010random2}.
  However, on a   microscopic level,   some  finer structures of   local  eigenvalue statistics  are revealed    first  for  real and complex Ginibre ensembles (see  \cite{BS09,forrester1999exact}), or random truncated orthogonal and  unitary matrices  (see \cite{zyczkowski2000truncations, khoruzhenko2011,KSZ10})
   in the bulk and at the  soft edge,  with the help of the exact eigenvalue density.   These microscopic behaviors are conjectured to be true even  in  the i.i.d.  case,  although the proof  seems much more difficult  than the corresponding  Hermitian analogy.   In \cite{tao2015random},  Tao and Vu  established  a four moment match theorem as a non-Hermitian version of   \cite{Tao-Vu11a}. 
   In a recent article  \cite{CES},
Cipolloni, Erd\H{o}s and  Schr\H{o}der investigate   local universality at the edge hold for general i.i.d. case.

  For any finite and fixed $M$,  the limiting    empirical spectral distribution
  for    products of independent random  matrices 
    has been extensively studied  in  \cite{adhikari2013determinantal,AB12,ABKN14, burda2010spectrum,burda2010eigenvalues, GNT, gotze2010asymptotic, gotze2015Asymptotic,  LiuWang16, Nem17,o2011products} and references therein (the collection of references above  is  far from complete).
     However,  the  local eigenvalue statistics is only known for  products of complex Ginibre matrices  and of truncated unitary matrices (see \cite{AB12} \cite{ABKN14} \cite{LiuWang16}), or  for   products of random  matrices  with i.i.d. entries under a moment matching hypothesis (see \cite{KOV18}).

\subsection{Problem statement--double limit}
As noted before,  under the two different limits of $M\to \infty$ and $N\to \infty$ the local eigenvalue  statistics for products of random matrices  display Gaussian and Ginibre statistics,  respectively.
 Then  a very natural question arises:
 \begin{center}{ \bf What  happens when both $M$ and $N$  tend to infinity?} \end{center}
 Obviously,  this  question  lies at the heart of understanding both kinds of universal limits.

In fact, a similar  double limit problem on singular value statistics for products of random matrices has been   proposed by Akemann, Burda and Kieburg \cite[Section 5]{Akemann-Burda-Kieburg14}    and  Deift \cite{Deift17}.
  In the case of complex Ginibre  matrices,  the  two authors with   D. Wang  solved it completely and proved in \cite{LWW}  that the local singular value statistics undergoes a transition as the relative ratio  $M/N$ changes from 0 to $\infty$: GUE statistics when $M/N \to 0$, Gaussian fluctuation when $M/N\to \infty$, and new critical phenomena when $M/N \to \gamma\in  (0, \infty)$.  This phase transition  is also independently observed by  Akemann, Burda, and Kieburg in the physical language  \cite{Akemann-Burda-Kieburg18}.
See a  few recent articles  \cite{Ahn,GS,HN} for singular values of products of random matrices as $M$ and $N$    change simultaneously.
    As to   the products of non-Hermitian
    complex Ginibre (or  truncated unitary) matrices, Jiang and Qi  studied    absolute values of  $N$ complex eigenvalues, and proved a phase transition of  the largest  absolute value  as  $M/N$ changes from 0 to $\infty$: Gumbel distribution  when $M/N \to 0$, Gaussian  distribution when $M/N\to \infty$, and an interpolating distribution  when $M/N \to \gamma\in  (0, \infty)$; see \cite{JQ17,QX}.  Also, Jiang and Qi  proved the convergence of  the empirical distributions of  complex eigenvalues for the product ensembles,  after proper rescalings; see \cite{JQ19} or \cite{CLQ}.

   The  main goal of this  article  is to  study local statistical properties of complex eigenvalues for products of i.i.d.  non-Hermitian random matrices,  at least including  complex Ginibre  matrices, truncated unitary matrices and spherical ensembles, as $M$ and $N$ may tend to infinity  simultaneously.   We observe  a phase transition of complex eigenvalues when    $M/N$ goes  from 0 to   $\infty$,  which can be treated as an  analogue  of  singular values for products of Ginibre matrices   \cite{Akemann-Burda-Kieburg18,LWW}.  Especially  in the critical regime where  $M/N \to \gamma\in  (0, \infty)$,  we find two  interpolating correlation kernels  between the Gaussian phenomenon and Ginibre statistics  in the bulk and at the soft edge respectively.  Besides, we believe that  the approach we take here   will be useful in      solving similar problems.

%
%

\subsection{Main results} \label{sect1.4} When  $X_{1}, X_2, \dotsc, X_{M}$ in the product $\Pi_M$   defined by   \eqref{Mproduct} are i.i.d. complex Ginibre matrices,
  the  eigenvalues of $\Pi_M$  has been proved by Akemann and Burda  \cite{AB12} to   form a determinantal point process in the complex plane  with  correlation kernel
  \begin{align}
    K_{M,N}(z_{1}, z_{2}) &= \frac{1}{\pi} \sqrt{w(z_{1}) w( z_{2})}\,  T_{M,N}(z_1,z_2), \label{ginibrekernel}
  \end{align}
  where the weight function
  \begin{align}
    w(z) &= \int_{-c - i \infty}^{-c + i \infty} \frac{ds}{2 \pi i} (\Gamma(-s))^{M} \abs{z}^{2s}, \quad z\in \mathbb{C} \label{ginibreweight}
  \end{align}
  with some $c>0$ and the finite sum of a truncated series
  \begin{align}
    T_{M,N}(z_1,z_2) &= \sum_{j=0}^{N-1} \frac{(z_{1}\overline{z}_{2})^{j}}{(\Gamma(j+1))^{M}}. \label{ginibresum}
  \end{align}
 Recall that for a determinantal point process with correlation kernel $K_{M,N}(z_1, z_2)$, the $n$-point correlation functions are  given  by
\begin{equation}
  R^{(n)}_{M,N}(z_1, \ldots, z_{n})= \det[  K_{M,N}(z_i, z_j) ]_{i,j=1}^n,
\end{equation}
see e.g.~\cite{Forrester10,Mehta04}.

For a single matrix  where $M=1$,  limiting  correlation functions exist   in the bulk
      of  $ \abs{u} < 1$
         \begin{multline} \label{Ginibrebulk}
          \lim_{N \to \infty} R^{(n)}_{1,N}\Big(\sqrt{N}(u + \frac{v_{1}}{ \sqrt{N}}),\ldots, \sqrt{N}(u + \frac{v_{n}}{ \sqrt{N}})\Big) =  \det\!{\big[\mathrm{K^{(bulk)}_{Gin}}(v_i,v_j)\big]}_{i,j=1}^n         \end{multline}
     with
   \begin{equation}\mathrm{K^{(bulk)}_{Gin}}(v_1,v_2)=
  \frac{1}{\pi} e^{-\frac{1}{2} (\abs{v_{1}}^{2} + \abs{v_{2}}^{2} - 2v_{1}\overline{v}_{2})},
   \end{equation}
   and  at the edge of  $u= e^{i\theta}$ with $\theta\in(-\pi,\pi]$,
    \begin{multline} \label{Ginibreedge}
          \lim_{N \to \infty} R^{(n)}_{1,N}\Big(\sqrt{N}(u + \frac{v_{1}}{ \sqrt{N}}),\ldots, \sqrt{N}(u + \frac{v_{n}}{ \sqrt{N}})\Big) =  \det\!{\big[\mathrm{K^{(edge)}_{Gin}}(v_i,v_j)\big]}_{i,j=1}^n         \end{multline}
     with
   \begin{equation}\mathrm{K^{(edge)}_{Gin}}(v_1,v_2)=
  \frac{1}{2\pi} e^{-\frac{1}{2} (\abs{v_{1}}^{2} + \abs{v_{2}}^{2} - 2v_{1}\overline{v}_{2})} \mathrm{erfc}\big(\frac{1}{\sqrt{2}}(e^{-i\theta}v_{1}+e^{i\theta}\overline{v}_{2} )\big),
   \end{equation}
   where \begin{equation}\mathrm{erfc}(x)=\frac{2}{\sqrt{\pi}}\int_{x}^{\infty}e^{-t^2}dt;\end{equation}
    see e.g. \cite{BS09,forrester1999exact}.
   Moreover, both hold uniformly for  $v_{1}, \cdots, v_{n}$ in any compact subset of $\mathbb{C}$.

For any finite and fixed $M$, the  scaling limits displayed in \eqref{Ginibrebulk} and  \eqref{Ginibreedge}  have been  proved to  be still valid; see   \cite{AB12,LiuWang16}.  Even    when $M$ goes to infinity but grows much  more slowly than $N$, the same results  will  still be seen to  hold.   Actually, we will establish  a complete  characterization  of limiting correlation functions   for  three product models: products of complex Ginibre matrices,   products of truncated unitary matrices,   and products of  Ginibre and inverse Ginibre matrices, according to  the relative rate of  $M$ and $N$ as $M+N\to \infty$. 
 The first of them is defined  as above and is  renamed   for reference purposes  to
\begin{quote}

{\bf Model A}:  $\Pi_M=X_{M}\cdots X_{1}$   where  $X_{1}, \dotsc, X_{M}$ are  i.i.d.  complex  Ginibre matrices of size $N\times N$, each having  i.i.d.~standard complex Gaussian entries.
\end{quote}

For Model A, our main results  are stated  in the following three theorems. The first one is

  \begin{thm}\label{ginibre-1}   Assume that  $\displaystyle\lim_{M+N \to \infty} M/N=\infty$.
    Given $k \in \{1, \ldots, N\}$, let $\rho = 2/\sqrt{N \psi'(N - k + 1)}$ and
 for $\theta\in (-\pi,\pi]$  set   \begin{align}
      z_{j} &= e^{M (\frac{1}{2} \psi(N - k + 1) + \frac{v_j}{ \rho\sqrt{MN}})} e^{i (\theta + \phi_j)}, \quad j=1,\ldots,n.
    \end{align}
   Then   limiting  $n$-correlation functions for eigenvalues of $\Pi_M$ in Model A
    \begin{align}
      \lim_{M+N \to \infty}
        \Big(\frac{\sqrt{M} }{\rho\sqrt{ N}}\Big)^{n}  \abs{z_{1}\cdots z_{n}}^2 R^{(n)}_{M,N}(z_{1}, \ldots,z_{n})= \begin{cases}\frac{1}{(\sqrt{2\pi})^3}
        e^{-  \frac{1}{2} v_{1}^{2} }, &n=1,\\
       0, & n>1, \end{cases}
    \end{align}
     hold   uniformly for $v_1, \ldots,v_n$ in a compact subset of $\mathbb{R}$ and $\phi_1, \ldots,\phi_n\in (-\pi,\pi]$.
  \end{thm}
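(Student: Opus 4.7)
My plan is to exploit the determinantal structure by factoring out the dominant ``angular--momentum mode'' $j=N-k$ from the correlation kernel. Setting
\[
  c(z) = \frac{w(z)^{1/2}}{\sqrt{\pi}}\cdot\frac{z^{N-k}}{(\Gamma(N-k+1))^{M/2}},
  \qquad \alpha(z_1,z_2) = \sum_{\ell=-(N-k)}^{k-1}(z_1\overline{z_2})^{\ell}\left(\frac{\Gamma(N-k+1)}{\Gamma(N-k+1+\ell)}\right)^{\!M},
\]
one has $K_{M,N}(z_i,z_j) = c(z_i)\overline{c(z_j)}\,\alpha(z_i,z_j)$ and hence
\[
  R^{(n)}_{M,N}(z_1,\ldots,z_n) \;=\; \prod_{i=1}^{n}|c(z_i)|^{2}\cdot\det\bigl[\alpha(z_i,z_j)\bigr]_{i,j=1}^{n}.
\]

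First I will analyze $|z|^{2}|c(z)|^{2}$ under the rescaling $\log|z|^2=M\psi(N-k+1)+v\sqrt{M\psi'(N-k+1)}$. The function $g_{N-k}(R):=w(\sqrt{R})R^{N-k}/(\Gamma(N-k+1))^{M}$ is the probability density of $|z|^{2}$ for the $(j=N{-}k)$-th mode; checking Mellin moments against the orthogonality relation $\int|z|^{2s}w(z)\,d^{2}z/\pi=(\Gamma(s+1))^{M}$ identifies it with the law of the product $\prod_{l=1}^{M}Y_{l}$ of i.i.d.\ $\Gamma(N-k+1,1)$ variables. A local central limit theorem for $\log R=\sum_{l}\log Y_{l}$, obtained either from an Edgeworth expansion or equivalently from a saddle--point analysis of the Mellin--Barnes integral defining $w(z)$ in \eqref{ginibreweight}, then yields
\[
  R\,g_{N-k}(R) \;=\; \frac{e^{-v^{2}/2}}{\sqrt{2\pi M\psi'(N-k+1)}}\bigl(1+o(1)\bigr)
\]
uniformly on compact $v$--sets. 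Since $|z|^{2}|c(z)|^{2}=Rg_{N-k}(R)/\pi$ and $\sqrt{M}/(\rho\sqrt{N})=\tfrac{1}{2}\sqrt{M\psi'(N-k+1)}$, this already produces the stated $n=1$ limit.

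Next I will show $\alpha(z_i,z_j)=1+o(1)$ uniformly. By Stirling, the gamma--ratio $(\Gamma(N-k+1)/\Gamma(N-k+1+\ell))^{M}$ carries a factor $e^{-M\ell\psi(N-k+1)-M\psi'(N-k+1)\ell^{2}/2+\cdots}$, which cancels against $|z_1\overline{z_2}|^{\ell}=e^{M\ell\psi(N-k+1)+O(\ell\sqrt{M\psi'})}$ and leaves each $\ell\ne 0$ summand of order $e^{-M\psi'(N-k+1)\ell^{2}/2+O(\sqrt{M\psi'(N-k+1)})}$. Because $M\psi'(N-k+1)\ge M/N\to\infty$, every such term decays exponentially; the truncation of $\ell$ at the endpoints is harmless since those terms are the most heavily suppressed. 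Hence $\alpha(z_i,z_j)=1+O(e^{-\delta M\psi'(N-k+1)})$ uniformly in $\phi_{i}$ and in the compact $v_{i}$--range.

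For $n\ge 2$ the key observation is that $[\alpha(z_i,z_j)]=\mathbf{1}\mathbf{1}^{\top}+E$ with $\|E\|$ exponentially small. Expanding the determinant by rows and noting that any configuration with two rows equal to $\mathbf{1}^{\top}$ contributes zero, one obtains $\det[\alpha(z_i,z_j)]=O(\|E\|^{n-1})$, which vanishes exponentially and defeats the polynomially large prefactor $\bigl(\sqrt{M\psi'(N-k+1)}/2\bigr)^{n}\prod|z_i|^{2}|c(z_i)|^{2}$; this gives the limit $0$. The main obstacle is getting the local CLT with a remainder that is uniform in $k\in\{1,\ldots,N\}$, as $\Gamma(N-k+1,1)$ varies from a fixed law near the hard edge $k\approx N$ to a nearly Gaussian law near the soft edge $k=O(1)$; the cleanest route is the explicit saddle--point analysis of the Mellin--Barnes representation of $w(z)$, which provides such a uniform estimate and, as a bonus, the Gaussian factor already appears as the leading Gaussian contribution at the saddle.
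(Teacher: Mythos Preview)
Your proposal is correct and follows essentially the same route as the paper: factor out the dominant mode $j=N-k$, obtain the Gaussian for the weight by a saddle-point analysis of the Mellin--Barnes integral (your probabilistic reading as a product of $\Gamma(N-k+1,1)$ variables is a pleasant repackaging of the same computation), show the remaining modes are exponentially damped, and for $n\ge2$ use that the leading kernel has rank one. The one place to tighten is the bound on the $\ell\ne0$ terms of $\alpha$: your Taylor estimate $e^{-M\psi'(N-k+1)\ell^{2}/2+O(\ell\sqrt{M\psi'})}$ is only valid for $|\ell|$ small relative to $N-k+1$, so when $N-k$ stays bounded (or for $|\ell|$ comparable to $N-k$) you must invoke the global convexity of $\ell\mapsto\log\Gamma(N-k+1+\ell)-\ell\,\psi(N-k+1)$ directly; the paper handles exactly this via its technical Lemma~\ref{lem-g} together with a case split $N-k\to\infty$ versus $N-k$ fixed.
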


In order to state the critical  results, we need to introduce  two new  kernels which correspond to the bulk and edge limits respectively.  For $\beta\in (0,\infty)$ and  $z_1,z_2\in \mathbb{C}$,  define \begin{equation}
    \mathrm{K^{(bulk)}_{\mathrm{crit}}}(\beta;z_1,z_2)=\frac{1}{\sqrt{2 \pi^{3}\beta}} e^{- \frac{1}{2\beta} (|z_1|^2+|\bar{z}_2|^2  + z_1^{2} + \bar{z}_2^2)} \sum_{j= - \infty}^{\infty} e^{- \frac{1}{2} \beta j^{2} -  (z_1+ \bar{z}_2)j}, \label{critbulk}    \end{equation}
and
 \begin{equation}
    \mathrm{K^{(edge)}_{\mathrm{crit}}}(\beta;z_1,z_2)=\frac{1}{\sqrt{2 \pi^{3}\beta}} e^{- \frac{1}{2\beta} (|z_1|^2+|\bar{z}_2|^2  + z_1^{2} + \bar{z}_2^2)}\sum_{j=0}^{\infty} e^{- \frac{1}{2} \beta j^{2} -  (z_1+ \bar{z}_2)j}. \label{critiedge}   \end{equation}
 It is worth emphasizing  that the summation in \eqref{critbulk} is equal to $ \theta(\frac{i(z_1+ \bar{z}_2)}{2\pi},\frac{i\beta}{2\pi})$ where  the Jacobi's basic  theta function
  \begin{equation}
    \theta(z,\tau)= \sum_{j= - \infty}^{\infty} e^{i\pi j^{2}\tau +2i\pi j z},  \quad z\in \mathbb{C}\  \&\  \Im \tau>0. \end{equation}
  Using the Jacobi's theta function identity  (see e.g. \cite[Chapter 1.7]{Mum}) \begin{equation} \theta\big(\frac{z}{\tau}, -\frac{1}{\tau}\big)=
  \sqrt{-i \tau}e^{   \frac{i\pi z^2 }{\tau} } \theta(z,\tau),\end{equation}
 it is  easy to see that  the bulk critical kernel satisfies  a  duality relation
    \begin{equation}
 \beta^{\frac{3}{4}} e^{ \frac{1}{4\beta} (z_1- \bar{z}_2)^2}     \mathrm{K^{(bulk)}_{\mathrm{crit}}}(\beta;z_1,z_2)=\beta'^{\frac{3}{4}} e^{ \frac{1}{4\beta'} (z'_{1}- \bar{z}'_{2})^2}      \mathrm{K^{(bulk)}_{\mathrm{crit}}}(\beta';z'_1,z'_2), \label{bulkduality}    \end{equation}
where
\begin{equation}\beta'=\frac{4\pi^2}{\beta}, \quad z_{1}'=-\frac{2\pi i}{\beta}z_1,  \quad  z_{2}'=-\frac{2\pi i}{\beta}z_2.\end{equation}

Let $\lfloor x \rfloor$ denote the largest integer less than or equal to real $x$. The next two theorems are

\begin{thm}\label{ginibre-2} Assume that   $\displaystyle\lim_{M+N \to \infty} M/N=\gamma\in (0,\infty)$.
  Given   $q \in [0, 1)$, let $u_{N} = \abs{u_{N}} e^{i \theta}$ with $\theta \in (- \pi, \pi]$ such that $\abs{u_{N}}^{2} = 1 - \frac{1}{N}\lfloor qN \rfloor$,
   set  \begin{align}
      z_{j} = \big(\sqrt{N}u_{N}\big)^{M} \Big(1  + \frac{1}{\sqrt{\gamma MN}} \big(v_j-\frac{\gamma}{4(1-q)}\big)\Big)^{M}, \quad j=1,\ldots,n.
    \end{align}
 Then   limiting  $n$-correlation functions for eigenvalues of $\Pi_M$ in Model A
     \begin{multline}
      \lim_{M+N \to \infty}
        \abs{z_{1}\cdots z_{n}}^2 R^{(n)}_{M,N}(z_{1}, \ldots,z_{n}) \\
       =\begin{cases}
        \det\!\big [  \mathrm{K^{(bulk)}_{\mathrm{crit}}}(\frac{\gamma}{1-q};v_i, v_j)\big]_{i,j=1}^n, & q \in (0, 1),\\
       \det\!\big [  \mathrm{K^{(edge)}_{\mathrm{crit}}}(\gamma;v_i, v_j)\big]_{i,j=1}^n, & q = 0,
      \end{cases}
    \end{multline}
     hold   uniformly for $v_1, \ldots,v_n$ in a compact subset of $\mathbb{C}$.
   \end{thm}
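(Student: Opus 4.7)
The plan is to reduce the theorem to a uniform asymptotic expansion of the rescaled kernel $\widetilde{K}_{M,N}(z_1,z_2) := |z_1|\,|z_2|\,K_{M,N}(z_1,z_2)$. Pulling $|z_i|$ out of row $i$ and $|z_j|$ out of column $j$ of $[K_{M,N}(z_i,z_j)]$ gives $|z_1\cdots z_n|^2\det[K_{M,N}(z_i,z_j)] = \det[\widetilde{K}_{M,N}(z_i,z_j)]$, and since the right-hand side is invariant under gauge transformations $\widetilde{K}_{M,N}(z_1,z_2) \mapsto \phi(z_1)\widetilde{K}_{M,N}(z_1,z_2)/\phi(z_2)$, it suffices to show that $\widetilde{K}_{M,N}$ matches $\mathrm{K^{(bulk)}_{\mathrm{crit}}}(\gamma/(1-q);v_1,v_2)$ (resp.\ $\mathrm{K^{(edge)}_{\mathrm{crit}}}(\gamma;v_1,v_2)$) modulo such a gauge, uniformly on compacts.

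The two factors of $\widetilde{K}_{M,N}$ are attacked separately. For the weight $w(z_j)$, steepest descent applies to the Mellin--Barnes representation \eqref{ginibreweight}: with $N_q := N-\lfloor qN\rfloor$ and $\beta := \gamma/(1-q)$, the saddle equation $\psi(-s) = \log|z_j|^2/M$ places the saddle at $s_j^\ast = -N_q - \delta_j$ with $\delta_j \to 2\Re v_j/\beta$, and the curvature $M\psi'(-s_j^\ast) \to \beta$ yields
\begin{equation}
w(z_j) \sim \frac{1}{\sqrt{2\pi M/N_q}}\exp\!\bigl[M\log\Gamma(-s_j^\ast) + 2 s_j^\ast \log|z_j|\bigr].
\end{equation}
For the sum $T_{M,N}$, Stirling shows the summand concentrates around $j = N_q$ in the bulk and around $j = N-1$ at the edge, so with $j = N_q + k$ (resp.\ $j = N-1-k$) and using
\begin{equation}
\log\Gamma(N_q+k+1) = \log\Gamma(N_q+1) + k\psi(N_q+1) + \tfrac{1}{2}k^2\psi'(N_q+1) + O(k^3/N_q^2)
\end{equation}
together with $\log(z_1\bar z_2) = M\log N_q + v_1+\bar v_2 - 2c + o(1)$, $c = \gamma/(4(1-q))$, the log of the ratio between the $k$-th summand and the $k=0$ summand converges to $-\tfrac{\beta}{2}k^2 + k(v_1+\bar v_2 - \beta)$ in the bulk and $-\tfrac{\gamma}{2}k^2 - (v_1+\bar v_2)k$ at the edge. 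Crucially, the shift $-c$ built into $z_j$ is what cancels the $M/(2N_q)$ subleading piece of $M\psi(N_q+1)$ to leave the clean linear coefficients above. Summation over $k\in\mathbb{Z}$ (bulk) or $k\in\mathbb{Z}_{\ge 0}$ (edge) produces the theta-like series of \eqref{critbulk} and \eqref{critiedge}, the bulk case needing a harmless $k\mapsto k-1$ shift followed by $k\mapsto -k$.

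Combining the two, the prefactor $|z_1||z_2|/(\pi \sqrt{2\pi M/N_q}\,N_q^M)$ tends to $1/\sqrt{2\pi^3\beta}$ (or $1/\sqrt{2\pi^3\gamma}$ at the edge) and the quadratic-in-$v$ pieces from both weight and sum assemble into $\exp[-(|v_1|^2+|v_2|^2+v_1^2+\bar v_2^2)/(2\beta)]$, matching the stated kernels. All residual factors are of the separable form $\exp[f(v_1) - \overline{g(v_2)}]$---including exponentials diverging like $N_q\Re(v_1+v_2)$ or $iN_q\Im(v_1-v_2)$, which split along left/right---and hence are absorbed into the gauge freedom.

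The main obstacle is the bookkeeping in this last combination: several exponents diverging individually at orders $N^2$, $N\log N$, and $N$ must either cancel exactly between weight, prefactor, and leading sum term, or reduce to a separable gauge, and a single misidentified subleading term would spoil the match. A further difficulty is that the theorem imposes no rate on $M/N\to\gamma$, so the Stirling and steepest-descent expansions must be carried out to an order producing $o(1)$ errors independent of this rate and uniformly over compact $v$-sets. Finally, the bulk and edge cases require slightly different treatment because the saddle point lies just outside the summation range at the edge, which is precisely why the two-sided sum in \eqref{critbulk} collapses to the one-sided sum \eqref{critiedge} at $q=0$.
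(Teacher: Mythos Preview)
Your plan is essentially correct and follows the same overall architecture as the paper: factor the rescaled kernel into the weight $w$ and the truncated sum $T_{M,N}$, analyse each separately by steepest-descent/Laplace around the point $N_q:=N-\lfloor qN\rfloor$, and absorb everything that does not match the target kernel into a gauge $\phi(z_1)/\phi(z_2)$.

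The one genuine methodological difference worth noting is in the treatment of $T_{M,N}$. You work directly with the discrete sum, shifting $j=N_q+k$ and Taylor-expanding $\log\Gamma$. The paper instead invokes Cauchy's residue theorem to rewrite
\[
T_{M,N}=(z_1\bar z_2)^{N_q-1}\oint_\Sigma \frac{dt}{2\pi i}\,\frac{(z_1\bar z_2)^{-t}(-1)^t\pi}{\Gamma^M(N_q-t)\sin\pi t}
\]
over a rectangular contour $\Sigma$ encircling the integers $-\lfloor qN\rfloor,\dots,N_q-1$, and then does the Laplace analysis on this contour integral. The payoff of the paper's route is twofold: (i) the distinction between bulk and edge becomes purely topological---at $q=0$ the contour has no poles to the left of $t=0$, so the two-sided theta series automatically truncates to the one-sided one, with no need for your index shift $k\mapsto k-1$ and sign flip; (ii) the tail estimates for $|t|\gtrsim N^{1/4}$ are handled uniformly by the monotonicity and lower-bound statements packaged in their Lemma~2.1 on $g(a;z)=a(\log\Gamma(a-z)-\log\Gamma(a)+z\psi(a))$, which replaces the ad hoc Stirling-remainder control you would need in the direct approach. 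Your route is more elementary and avoids introducing the auxiliary contour, but you will have to reproduce the content of that lemma (in particular the global lower bound $\Re g(a;x+iy_0)\ge \tfrac14\sqrt a$ for $|x|\ge a^{1/4}$) to justify the replacement of the finite sum by $\sum_{k\in\mathbb Z}$ and to get uniformity independent of the rate at which $M/N\to\gamma$.

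For the weight, you and the paper do the same thing; the paper simply shifts $s\mapsto s-N_q$ and expands at $s=0$ rather than locating the exact saddle $s^\ast=-N_q-\delta_j$, which spares it the bookkeeping of tracking $\delta_j$. (Incidentally, your $\delta_j\to 2\Re v_j/\beta$ drops the $-c$ shift; it should be $2\Re(v_j-c)/\beta$, though this only moves a constant into the gauge.)
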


\begin{thm}\label{ginibre-3} Assume that   $\displaystyle\lim_{M+N \to \infty} M/N=0$.
  Given   $ u= \abs{u} e^{i \theta}$ with $\theta \in (- \pi, \pi]$,
   set  \begin{align}
      z_{j} = \big(\sqrt{N}u\big)^{M} \Big(1  + \frac{v_j}{u\sqrt{ MN}}\Big)^{M}, \quad j=1,\ldots,n.
    \end{align}
 Then   limiting  $n$-correlation functions for eigenvalues of $\Pi_M$ in Model A
     \begin{multline}
      \lim_{M+N \to \infty}
        \Big(\frac{M}{N|u|^2}\Big)^{n} \abs{z_{1}\cdots z_{n}}^2   R^{(n)}_{M,N}(z_{1}, \ldots,z_{n}) \\
       =\begin{cases}
        \det\!\big [  \mathrm{K^{(bulk)}_{\mathrm{Gin}}}(v_i, v_j)\big]_{i,j=1}^n, & |u| \in (0, 1),\\
       \det\!\big [  \mathrm{K^{(edge)}_{\mathrm{Gin}}}(v_i, v_j)\big]_{i,j=1}^n, & |u| = 1,
      \end{cases}
    \end{multline}
    hold   uniformly for $v_1, \ldots,v_n$ in a compact subset of $\mathbb{C}$.
   \end{thm}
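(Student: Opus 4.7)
The plan is to perform a uniform saddle-point analysis of
$K_{M,N}(z_1,z_2)=\frac{1}{\pi}\sqrt{w(z_1)w(z_2)}\,T_{M,N}(z_1,z_2)$
under the scaling $z_k=(\sqrt{N}u)^M(1+v_k/(u\sqrt{MN}))^M$ when $M/N\to 0$. A direct computation gives $|dz_k/dv_k|^2\sim|z_k|^2 M/(N|u|^2)$, so the prefactor $(M/(N|u|^2))^n|z_1\cdots z_n|^2$ is exactly the Jacobian of the change of variables $v_k\mapsto z_k$. By multilinearity of the determinant it therefore suffices to prove that the rescaled kernel
\begin{equation*}
\widetilde{K}_{M,N}(v_1,v_2):=\frac{M}{N|u|^2}\,|z_1||z_2|\,K_{M,N}(z_1,z_2)
\end{equation*}
converges, up to a modulus-one gauge factor $g(v_1)\overline{g(v_2)}$ (which drops out of the determinant), to $\mathrm{K^{(bulk)}_{Gin}}(v_1,v_2)$ when $|u|\in(0,1)$ and to $\mathrm{K^{(edge)}_{Gin}}(v_1,v_2)$ when $|u|=1$.

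The first input is the asymptotics of the weight. Substituting $s=-j$ in \eqref{ginibreweight} and applying Stirling to $(\Gamma(j))^M|z|^{-2j}$, the saddle $j^{*}_w$ satisfies $\psi(j^{*}_w)=(2/M)\log|z|$, so $j^{*}_w\sim|z|^{2/M}+\tfrac12$, and the resulting steepest-descent estimate is
\begin{equation*}
w(z)\sim(2\pi)^{(M-1)/2}\,|z|^{-1+1/M}\,M^{-1/2}\,e^{-M|z|^{2/M}}.
\end{equation*}
For the sum $T_{M,N}(z_1,z_2)=\sum_{j=0}^{N-1}(z_1\bar z_2)^j/\Gamma(j+1)^M$, set $R=(z_1\bar z_2)^{1/M}$; Laplace's method gives a saddle $j^{*}_s\sim R-\tfrac12$ with Gaussian width $\sqrt{j^{*}_s/M}\sim|u|\sqrt{N/M}$. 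Under the scaling one computes
\begin{equation*}
R=N|u|^2+\sqrt{N/M}(v_1\bar u+\bar v_2 u)+\frac{v_1\bar v_2}{M}+o(1),
\end{equation*}
and a parallel expansion for $|z_k|^{2/M}$. In the bulk $|u|\in(0,1)$, the saddle lies a macroscopic distance $\sim N(1-|u|^2)$ below the cutoff $N-1$, so the truncated sum differs from the full Gaussian integral by exponentially small error. At the edge $|u|=1$, the saddle sits within $O(\sqrt{N/M})$ of $N-1$; rescaling $\ell=(j-j^{*}_s)/\sqrt{j^{*}_s/M}$, the upper limit converges, using $u=e^{i\theta}$, to $-(v_1 e^{-i\theta}+\bar v_2 e^{i\theta})$, and the incomplete Gaussian produces $\tfrac12\mathrm{erfc}(\tfrac{1}{\sqrt 2}(v_1 e^{-i\theta}+\bar v_2 e^{i\theta}))$. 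In both cases,
\begin{equation*}
T_{M,N}(z_1,z_2)\sim(2\pi)^{(1-M)/2}(j^{*}_s)^{(1-M)/2}M^{-1/2}e^{M(j^{*}_s+1/2)}\cdot\mathcal{E}(v_1,v_2),
\end{equation*}
with $\mathcal{E}=1$ (bulk) and $\mathcal{E}=\tfrac12\mathrm{erfc}(\cdot)$ (edge).

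Combining the three asymptotics in $\widetilde{K}_{M,N}$, all powers of $2\pi$, $M$, $N$ and $|u|$ in the prefactors cancel exactly against the Jacobian $M|z_1||z_2|/(N|u|^2)$, leaving an exponent $\tfrac{M}{2}\bigl(2(z_1\bar z_2)^{1/M}-|z_1|^{2/M}-|z_2|^{2/M}\bigr)$. With $r_k=|z_k|^{2/M}$, direct expansion under the scaling gives
\begin{equation*}
\tfrac{M}{2}(2\rho-r_1-r_2)=-\tfrac12(|v_1|^2+|v_2|^2-2v_1\bar v_2)+i\sqrt{MN}\bigl(\Im(v_1\bar u)-\Im(v_2\bar u)\bigr)+o(1).
\end{equation*}
The divergent imaginary piece is of the form $h(v_1)-h(v_2)$ with $h$ purely imaginary, hence exponentiates to a gauge factor $g(v_1)\overline{g(v_2)}$ with $|g|=1$ that disappears from the determinant; the remainder combines with $(1/\pi)\mathcal{E}$ to match the claimed Ginibre kernel in each case. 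Uniform convergence in $v_1,\ldots,v_n$ on compact subsets of $\mathbb{C}$ follows from the uniformity in bounded parameters of all saddle-point estimates.

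The principal obstacle is the precise tracking of these subleading contributions. Because $\sqrt{N/M}\to\infty$ in this regime, both $j^{*}_w$ and $j^{*}_s$ drift by divergent amounts from their common leading value $N|u|^2$, and these divergent pieces must conspire exactly into the gauge factor above plus the finite Ginibre exponent; similarly, the $O(1/j)$ remainders in Stirling need to give errors of size $O(M/N)\to 0$ after multiplication by $M$, which is what legitimizes the explicit formulas. The edge case additionally requires controlling the transition across the boundary $j=N-1$ uniformly in $v_1,v_2$ on a compact set, so that the scaled upper limit genuinely behaves like the $\mathrm{erfc}$ argument.
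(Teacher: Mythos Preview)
Your approach is correct and follows essentially the same route as the paper: steepest descent on the Mellin--Barnes integral for the weight and a Laplace-type analysis of the truncated sum, with the upper cutoff producing the incomplete Gaussian and hence the $\mathrm{erfc}$ at the edge. The paper formalizes the sum-to-integral step as a stand-alone Euler--Maclaurin lemma (Lemma~\ref{lem-eulmac}), applied after the rescaling $j=tN$ with the common phase function $f(t)=t\log t-t-t\log|u|^2$; your direct Laplace argument on the sum is the same mechanism with the error control left heuristic. The only organizational difference is in the bookkeeping: you carry the two saddles $j^*_w\sim|z|^{2/M}$ and $j^*_s\sim(z_1\bar z_2)^{1/M}$ separately and let their divergent mismatch collapse into a unimodular gauge factor at the end, whereas the paper normalizes both the weight and the sum by $e^{\pm MNf(|u|^2)}$ from the outset, so no divergent phase ever has to be isolated by hand.
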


The rest of this article is organized as follows. In the next Section \ref{sect2} we  give proofs of the above three theorems.  In  Sections \ref{sect3} and   \ref{sect4},  we investigate   products of truncated unitary matrices and products of Ginibre and inverse Ginibre matrices, which includes products of spherical ensembles  as   a particular case.  In last Section \ref{conclusion},       we discuss a few relevant questions such as crossover transitions from critical bulk and edge kernels and open questions.

\section{Proofs of Theorems  \ref{ginibre-1}-\ref{ginibre-3}}\label{sect2}
Theorems \ref{ginibre-1},   \ref{ginibre-2} and \ref{ginibre-3} reflect distinct local  behavior of eigenvalues as the relative ratio of  $M$ and $N$ changes, so we need to  take different routes to complete their proofs.   Meanwhile, we will understand the reason behind it.   It is worth emphasizing that our method will be useful in the study of local universality problem for  a class of non-Hermitian random matrices where joint  eigenvalue  probability density functions can be given explicitly, especially for  products of non-Hermitian random matrices.

Since asymptotic analysis   of  $n$-correlation functions for eigenvalues of $\Pi_M$ in Model A reduces
to that of  the correlation kernel \eqref{ginibrekernel}, it is sufficient for  us to focus on the latter. The kernel consists of two factors that are  the weight function \eqref{ginibreweight} and a finite sum \eqref{ginibresum},  we first discuss  them separately  and then  combine them together to complete the proof.


  We  need  the following technical lemma  and will  use it  frequently.
  \begin{lem}\label{lem-g}
    For a real   parameter  $a\geq 1$,  introduce a function of $z$
    \begin{align}
      g(a;z) = a\big(\log \Gamma(a - z) - \log \Gamma(a) + z\psi(a)\big), \quad \Re z <a,
    \end{align} where $\psi(a)$ denotes the digamma function.
    Let  $\delta\in [0, 1/2]$ and $y_0\in [-\delta,\delta]$,  then  the following hold.
    \begin{enumerate}
      \item  \label{lem-g1}  $\Re g(a;x+iy_0)$ as a function of $x$ is  strictly decreasing over  $(-\infty, 0]$ and strictly increasing over  $[\delta, a-\delta]$.

      \item \label{lem-g2} For any $y\in \realR$,
          \begin{equation}
            \Re g(a;iy) \leq - \frac{|y|}{4} \min\{a,|y|\}.
          \end{equation}

      \item \label{lem-g3} For sufficiently large $a$,
          \begin{equation}
            g(a; z) = \frac{1}{2} z^{2}+ \bigO(a^{-\frac{1}{4}}), \quad \mathrm{if}\  z=\bigO(a^{\frac{1}{4}}),
          \end{equation}
          and
          \begin{equation}
            \inf_{x\leq a-\delta, \abs{x} \geq a^{\frac{1}{4}}} \Re g(a;x+iy_0) \geq \frac{\sqrt{a}}{4}.
          \end{equation}
    \end{enumerate}
  \end{lem}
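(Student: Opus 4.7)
The plan is to base all three parts on a single clean series representation for $g(a;z)$. Combining the Weierstrass product expansion of $\log\Gamma$ with the corresponding series for $\psi(a)$, one can derive
\begin{equation*}
g(a; z) = -a\sum_{n=0}^\infty \left[\log\!\left(1 - \frac{z}{n+a}\right) + \frac{z}{n+a}\right], \qquad \Re z < a,
\end{equation*}
in which every term has a clear sign and a clear size.

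For part (1), I would differentiate in $x = \Re z$, take the real part term by term, and collect over a common denominator, obtaining
\begin{equation*}
\partial_x \Re g(a; x+iy_0) = a\sum_{n=0}^\infty \frac{x(n+a-x) - y_0^2}{(n+a)\bigl[(n+a-x)^2 + y_0^2\bigr]}.
\end{equation*}
For $x \le 0$ each numerator is at most $-y_0^2 \le 0$ and is strictly negative unless $x=0$ and $y_0=0$, giving strict decrease on $(-\infty,0]$. For $x \in [\delta, a-\delta]$ one has $x(n+a-x) \ge \delta^2 \ge y_0^2$ at $n=0$ (strict for $n\ge 1$), so the sum is strictly positive and $\Re g$ strictly increases. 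For part (2), specializing $x=0$ in the series and pairing conjugate logarithms collapses to
\begin{equation*}
\Re g(a; iy) = -\frac{a}{2}\sum_{n=0}^\infty \log\!\left(1 + \frac{y^2}{(n+a)^2}\right).
\end{equation*}
When $|y| \le a$ each argument is $\le 1$, so $\log(1+u) \ge u/2$ combined with $\psi'(a) \ge \int_0^\infty (t+a)^{-2}\,dt = 1/a$ yields $\Re g(a;iy) \le -y^2/4$. When $|y| > a$, I would compare the sum to $\int_0^\infty \log(1+y^2/(t+a)^2)\,dt$, which via $t+a = |y|v$ evaluates to $aF(|y|/a)$ with $F(r) := \pi r - \log(1+r^2) - 2r\arctan(1/r)$; a short computation gives $F'(r) = \pi - 2\arctan(1/r) \ge \pi/2$ on $r \ge 1$, whence $F(r) \ge r/2$ and therefore $\Re g(a;iy) \le -a|y|/4$.

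For part (3), expanding $\log(1-w)+w = -\sum_{k\ge 2} w^k/k$ in the series gives
\begin{equation*}
g(a;z) = \tfrac12\,a\psi'(a)\,z^2 + \sum_{k\ge 3} \frac{a z^k}{k}\sum_{n=0}^\infty \frac{1}{(n+a)^k}.
\end{equation*}
Using $a\psi'(a) = 1+\bigO(1/a)$ and the standard bound $\sum_n (n+a)^{-k} \le C_k\, a^{1-k}$, the tail is $\bigO(|z|^3/a)$ uniformly on $|z| \le a/2$, so $|z| = \bigO(a^{1/4})$ yields $g(a;z) = z^2/2 + \bigO(a^{-1/4})$. For the uniform lower bound, since $a \ge 1$ and $\delta \le 1/2$ one has $a^{1/4} \ge \delta$, hence $[a^{1/4}, a-\delta] \subset [\delta, a-\delta]$; part (1) then forces the infimum of $\Re g(a;x+iy_0)$ over $\{x \le a-\delta,\ |x| \ge a^{1/4}\}$ to be attained at $x = \pm a^{1/4}$, and the Taylor expansion there evaluates to $(\sqrt a - y_0^2)/2 + \bigO(a^{-1/4}) \ge \sqrt a/4$ once $a$ is sufficiently large. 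The delicate step is the case $|y| > a$ of part (2): the sharp constant $1/4$ rules out a one-sided inequality on each summand, so one really must verify the inequality $F(r) \ge r/2$ for $r \ge 1$ via the explicit derivative computation; all remaining estimates become routine once the master series representation is in hand.
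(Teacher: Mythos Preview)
Your proof is correct and follows essentially the same route as the paper: the same derivative formula for part~(1), the same series $\Re g(a;iy)=-\tfrac{a}{2}\sum_n\log(1+y^2/(n+a)^2)$ for part~(2), and the same Taylor-plus-monotonicity argument for part~(3). The only notable difference is in part~(2) for $|y|>a$: rather than computing the full integral $\int_0^\infty\log(1+y^2/(t+a)^2)\,dt$ and analyzing $F(r)$, the paper simply discards the range $t\in[0,|y|-a]$ and applies $\log(1+u)\ge u/2$ on the remainder $\int_{|y|}^\infty\log(1+y^2/t^2)\,dt$ (where $y^2/t^2\le 1$), immediately giving $-a|y|/4$; this is quicker but your explicit computation is equally valid.
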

  \begin{proof}
    Using the series expansion \cite[5.7.6]{Olver10} for $z\neq 0, -1,-2,\ldots$
    \begin{equation} \label{eq:digamma_series}
      \psi(z) = -\gamma_0 + \sum^{\infty}_{n = 0} \left( \frac{1}{n + 1} - \frac{1}{n + z} \right)
    \end{equation}
    with $\gamma_0$ the Euler constant, we have
    \begin{equation}
      \begin{split}
        \frac{d}{dx}\Re g(a;x+i y_0) &= a\big(\psi(a) - \Re \psi(a - x-i y_0)\big) \\
        &= \sum_{n=0}^{\infty} \frac{a(x(a+n-x)-y_{0}^2)}{(a+n)((a+n-x)^2+y_{0}^2)}
      \end{split}
    \end{equation}
    and further
    \begin{align}
      \frac{d}{dx}\Re g(a;x+i y_0) \begin{cases}<0, &x<0,\\
       >0, & \delta\leq x\leq a-\delta. \end{cases}
         \end{align}
    So part \ref{lem-g1}  immediately follows.

    For part \ref{lem-g2}, we see  from \cite[5.8.3]{Olver10} that
    \begin{equation}
      \begin{split}
        \Re{g(a; iy)}
        = - \frac{a}{2} \sum_{n=0}^{\infty} \log\Big(1 + \frac{y^{2}}{(a+n)^{2}}\Big)
        < - \frac{a}{2} \int_{a}^{\infty} \log\Big(1 + \frac{y^{2}}{t^{2}}\Big) dt.
      \end{split}
    \end{equation}
    For $\abs{y} \leq a$, we use the inequality $\log(1 + z) \geq z/2$ for $z \in [0, 1]$ to obtain
    \begin{align}
      \Re{g(a; iy)} < -\frac{a}{4} \int_{a}^{\infty} \frac{\abs{y}^2}{t^{2}} dt = -\frac{y^{2}}{4},
    \end{align}
    while for  $\abs{y} > a$,
    \begin{align}
      \Re{g(a;iy)}
      < - \frac{a}{2} \int_{|y|}^{\infty} \log\Big(1 + \frac{y^{2}}{t^{2}}\Big) dt
      < -\frac{a\abs{y}}{4}.
    \end{align}
    Put them together and we  thus  get the desired result.

    Finally, for part \ref{lem-g3}, take the Taylor expansion for $g(a;z)$ at $z= 0$ and use approximation for the digamma function we  easily see  for $z=\bigO(a^{\frac{1}{4}})$
    \begin{align}
      g(a;z) = \frac{z^{2}}{2} + \bigO(\frac{z^{3}}{a}) = \frac{z^{2}}{2} + \bigO(a^{-\frac{1}{4}}). \label{gleading}
    \end{align}
    Thus we know from the monotonicity in part \ref{lem-g1} and \eqref{gleading} that
    \begin{equation}
      \inf_{x\leq a-\delta, \abs{x} \geq a^{\frac{1}{4}}} \Re g(a;x+iy_0)=\Re g(a;\pm a^{\frac{1}{4}}+iy_0)  \geq \frac{\sqrt{a}}{4}.
    \end{equation}
  \end{proof}

\subsection{Proof of Theorem \ref{ginibre-1}}
  By the assumption on $M$ and $N$ in this setting, we know that $M$ must tend to infinity and $N$ may tend to infinity or be a finite number, whenever  $M+N\to \infty$.

  \begin{proof}[Proof of Theorem \ref{ginibre-1}]

    We proceed in two steps, in which the weight function and finite sum are discussed  respectively, and then  combine them to complete the proof.

    {\bf Step 1: Asymptotics for the finite sum.}

    Recalling the choice of  $z_{1}$ and  $z_{2}$ in the setting, after  the change of summation index $j\to N-k-j$ the finite sum $T_{M,N}$ in \eqref{ginibresum} can be  rewritten as
    \begin{align}
      T_{M,N} 
      = (z_{1}\bar{z}_{2})^{N - k} \sum_{j=1-k}^{N-k} e^{- \frac{M}{N-k+1} f_{M,N}(j)} e^{i j ( \phi_{1}-\phi_{2})}, \label{G1-sum1}
    \end{align}
    where  $f_{M,N}(t) = f_{N}(t) + t h_{M,N}$
     with
    \begin{align}
      f_{N}(t) =  (N-k+1) \big(\log\Gamma(N - k + 1 - t) + t \psi(N - k + 1)\big), \label{fN}
          \end{align}
          and   \begin{align}
      h_{M,N}= \frac{N-k+1}{\rho \sqrt{MN}}(v_1+v_2).
    \end{align}
    Next, we  will prove that the term of $j=0$ in  the summation of \eqref{G1-sum1}  gives a dominant   contribution and the total contribution of the other terms is negligible as $M+N\to \infty$. With the notation in Lemma \ref{lem-g}, we can rewrite
    \begin{align}
      f_{M,N}(t) - f_{M,N}(0)
      &= g(N - k + 1;t)+ th_{M,N}.
    \end{align}
In order to obtain approximation of the finite sum, we need to  tackle the  two cases that   $N - k$ tends to infinity   and $N - k$ is finite  as $M+N\to \infty$.

  {\bf Case 1}:  $N - k \to \infty$ as $M+N\to \infty$. In this case, use statement \ref{lem-g3} in Lemma \ref{lem-g} we have for sufficiently large $N-k$
    \begin{align}
      f_{N}(t) - f_{N}(0)
      &=\frac{t^{2}}{2}\Big(1 + \bigO\big((N - k)^{-\frac{3}{4}}\big)\Big)\geq \frac{1}{4}t^2
    \end{align}
  whenever    $\abs{t} \leq (N - k)^{\frac{1}{4}}$.  Noting that $|h_{M,N}|\leq  \sqrt{M/(N-k+1)}$ for large $N-k$,
   we obtain
  for $1\leq \abs{t} \leq (N - k)^{\frac{1}{4}}$ that
    \begin{align}
      f_{M,N}(t) - f_{M,N}(0)
       \geq \frac{1}{4}t^2-\sqrt{\frac{N-k+1}{M}}\big|(v_{1}+v_2)t\big|.  \label{gin1-lbound}   \end{align}
  Moreover,  the $t$-function on the right-hand side is increasing over $[1/2,\infty)$ whenever  $\sqrt{(N-k+1)/M} |(v_{1}+v_2)|\leq 1/8$.
   By introducing
  \begin{equation}
  J_{in}=[1 - k, N-k]\cap [-\lfloor(N - k)^{\frac{1}{4}}\rfloor, \lfloor(N - k)^{\frac{1}{4}} \rfloor],
  \end{equation}
  \begin{equation}J_{out}=[1 - k, N-k]\!\setminus\! [-\lfloor (N - k)^{\frac{1}{4}} \rfloor,  \lfloor(N - k)^{\frac{1}{4}} \rfloor],
  \end{equation}
  noting that $v_1, v_2$ are in a compact of $\mathbb{R}$, we thus get
    \begin{align}  \label{ginibre-1-sum-main}
    \begin{split}
      &
     \Big| \sum_{0\neq j\in J_{in}}e^{-\frac{M}{N-k+1} f_{M,N}(j)} e^{i j ( \phi_{1}-\phi_{2})}\Big| 
     \\
      &\leq 2e^{-\frac{M}{N-k+1} f_{M,N}(0)}  \sum_{j=1}^{\lfloor (N - k)^{\frac{1}{4}} \rfloor}
        e^{-\frac{M}{N-k+1}\big(\frac{1}{4}t^2-\sqrt{\frac{N-k+1}{M}}|(v_{1}+v_2)t|\big) } 
        \\
      &\leq 4e^{-\frac{M}{N-k+1} f_{M,N}(0)}  \int_{1/2}^{\infty} dt\,
        e^{-\frac{M}{N-k+1}\big(\frac{1}{4}t^2-\sqrt{\frac{N-k+1}{M}}|(v_{1}+v_2)t|\big) } 
        \\
      &= e^{-\frac{M}{N-k+1} f_{M,N}(0)}  \bigO\big(\sqrt{(N-k+1)/M} \big),
      \end{split}
    \end{align}
    where in the last estimate  use has been made of the change of variables $t \to t\sqrt{(N-k+1)/M}$.
    Also by statement \ref{lem-g3} in Lemma \ref{lem-g}, it immediately follows that
    \begin{align}
      \inf_{t \in  J_{out}} &\Re\{f_{M,N}(t) - f_{M,N}(0)\}       \geq  \frac{1}{8} \sqrt{N - k+1},
    \end{align}
   from which we arrive at   \begin{align} \label{ginibre-1-sum-out}  \begin{split}
      &
     \Big| \sum_{ j\in J_{out}}e^{-\frac{M}{N-k+1} f_{M,N}(j)} e^{i j ( \phi_{1}-\phi_{2})}\Big|\\
      &\leq N e^{-\frac{M}{N-k+1} f_{M,N}(0) - \frac{M}{ 8\sqrt{N - k+1}}} \\
      &\leq N e^{-\frac{M}{N-k+1} f_{M,N}(0) - \frac{M}{ 8\sqrt{N}}}.    \end{split}
    \end{align}
 Combination of \eqref{ginibre-1-sum-main} and \eqref{ginibre-1-sum-out} shows that
    \begin{align}
      T_{M,N} = (z_{1} \bar{z}_{2})^{N - k} e^{-\frac{M}{N-k+1} f_{M,N}(0)}\Big (1 +
        \bigO\big(\sqrt{\frac{N-k+1}{M}} \big)+ \bigO\big(N
       e^{- \frac{M}{ 8\sqrt{N} } }
        \big)
        \Big). \label{ginibre-1-sum1}
    \end{align}
     Here we stress that $N
       e^{- \frac{M}{ 8\sqrt{N} } } \to 0$ when both $M\to \infty$ and  $M/N \to \infty$.

     {\bf Case 2}:   $N - k$ is a fixed number  as $M+N\to \infty$.   In such case Lemma \ref{lem-g} shows  that
     \begin{align}
      \inf_{|t|\geq 1, t \in [1-k,N-k]} &\Re\{f_{M,N}(t) - f_{M,N}(0)\} \geq \frac{1}{2} g(N - k + 1, 1) =: \frac{1}{2}\delta_{min} > 0.
    \end{align}
    As a consequence,
     \begin{align}
\Big| \sum_{0\neq j\in[1-k,N-k]}e^{- \frac{M}{N-k+1} f_{M,N}(j)} e^{i j ( \phi_{1}-\phi_{2})}\Big|\leq e^{- \frac{M}{N-k+1} f_{M,N}(0)} N e^{- \frac{M\delta_{min}}{2(N-k+1)}}.
    \end{align}
   Therefore, we obtain a similar  estimate as in Case 1
     \begin{align}
      T_{M,N} = (z_{1} \bar{z}_{2})^{N - k} e^{-\frac{M}{N-k+1} f_{M,N}(0)}\big (1 +
        \bigO\big(
       N e^{- \frac{M\delta_{min}}{2(N-k+1)}})
        \big). \label{ginibre-1-sum}
    \end{align}

  {\bf Step 2: Asymptotics for the weight function.}

After  the change of variables $s\to s-(N-k+1)$ in   \eqref{ginibreweight}, the weight function can be rewritten   as
    \begin{align}
      w(z_{1})= \abs{z_{1}}^{-2(N - k + 1)} \int_{- i\infty}^{i\infty} \frac{ds}{2 \pi i} e^{\frac{M}{N-k+1} f_{N}(s) + 2 s \sqrt{\frac{M}{N}} \frac{v_1}{\rho}},
    \end{align}
    where $f_{N}(s)$ is defined in \eqref{fN}. Let $s=iy$, statement \ref{lem-g2} in Lemma \ref{lem-g} shows that
     \begin{align}
      \Re\{ f_N(iy)- f_N(0)\}&=\Re{g(N - k + 1, iy)} \nonumber\\
      &\leq - \frac{\abs{y}(N - k + 1)}{4} \min\big(1, \frac{\abs{y}}{N - k + 1}\big).\label{upperbound}
    \end{align}
   This  can easily be used to prove  the estimate
    \begin{align}
      & \Big |\Big(\int_{i \delta}^{i\infty}+ \int_{-i\infty}^{-i \delta} \Big)\frac{ds}{2 \pi i } e^{\frac{M}{N-k+1} (f_{N}(s)-f_{N}(0)) + 2s \sqrt{\frac{M}{N}} \frac{v_1}{\rho}}  \Big|
      \nonumber\\
      &\leq  2 \Big(\int_{ \delta}^{N -  k+1}dy\,  e^{-\frac{M}{4(N - k+1)} y^{2}}  + \int_{N -  k+1}^{\infty}dy\,  e^{-\frac{M}{8}  y}\Big)\nonumber\\
      &\leq
      \Big(  \frac{2(N-k+1)}{M\delta } e^{-\frac{M\delta^2}{4(N - k+1)}  }  +    \frac{8}{M} e^{-\frac{1}{8}M(N-k+1)}\Big), \label{1-weight-1}
    \end{align}
    which
    decays  exponentially to zero.

    Finally, use the standard  steepest descent argument and  we see that the leading contribution for the integral in   the weight function $w(z_{1})$ comes from the integration over some small neighborhood  of the saddle point  $s_0=0$.  Noticing the fact that $f_{N}''(0)=(N-k+1) \psi'(N - k + 1)$ is a fixed positive number ($N-k$ is fixed) or approximates some positive constant ($N-k\to\infty$),  take a Taylor expansion at zero  and  we have
     \begin{align}
     &\int_{- i\delta}^{i\delta} \frac{ds}{2 \pi i} e^{\frac{M}{N-k+1} (f_{N}(s)-f_{N}(0)) + 2 s \sqrt{\frac{M}{N}} \frac{v_1}{\rho}}=\frac{1}{\sqrt{ M \psi'(N - k + 1)})}\nonumber\\
     &\times
     \Big( \int_{-i\infty}^{i\infty} \frac{ds}{2 \pi i} e^{\frac{1}{2} s^{2} + v_1 s }+  \bigO\Big(\frac{1}{\sqrt{ M \psi'(N - k + 1)}}\Big)\Big). \label{1-weight-2}
\end{align}
   Combination of  \eqref{1-weight-1} and \eqref{1-weight-2}
 gives rise to
    \begin{equation}\label{ginibre-1-weight}
      w(z_{1})
      =\frac{\abs{z_{1}}^{-2(N - k + 1)} }{ \sqrt{2\pi M \psi'(N - k + 1)}} e^{\frac{M}{N-k+1} f_{N}(0)-\frac{v_{1}^2}{2}}  \Big(1 +  \bigO\Big(\frac{1}{\sqrt{ M \psi'(N - k + 1)}}\Big)\Big).
    \end{equation}

    Combine  \eqref{ginibre-1-sum1}, \eqref{ginibre-1-sum}  and \eqref{ginibre-1-weight}, we get an  approximation of the correlation  kernel, uniformly for $v_1, v_2$ in a compact subset of $\mathbb{R}$,        \begin{align}
       K_{M, N}(z_{1}, z_{2}) = \frac{1}{(2 \pi)^{\frac{3}{2}}} e^{- \frac{1}{4} (v_{1}^{2} + v_{2}^{2})}  \frac{(z_{1}\bar{z}_{2})^{N-k}}{\abs{z_{1}z_{2}}^{N-k+1}}   \frac{1+o(1)}{\sqrt{ M \psi'(N - k + 1)} },
    \end{align}
from which   Theorem \ref{ginibre-1}   immediately follows.
  \end{proof}

%

\subsection{Proof of Theorem \ref{ginibre-2}}   In this case  both  $M$ and $N$   tend to infinity whenever  $M+N\to \infty$.

\begin{proof} [Proof of Theorem \ref{ginibre-2}]
 We proceed in two steps, in which the weight function and finite sum are discussed  respectively, and then complete  the proof.

    {\bf Step 1: Asymptotics for the finite sum.}

     By Cauchy's  residue theorem,  we rewrite the finite sum  $T_{M,N}$ as
    \begin{align}
      T_{M,N} 
      &= (z_{1} \bar{z}_{2})^{N- \lfloor qN \rfloor - 1} \oint_{\Sigma}
      \frac{dt}{2 \pi i} \frac{(z_{1} \bar{z}_{2})^{-t} (-1)^{t} \pi}{\Gamma^{M}(N - \lfloor qN \rfloor - t) \sin \pi t}, \label{sum-2-int}
    \end{align}
    where $\Sigma$ is an anticlockwise contour  just encircling    $ - \lfloor qN \rfloor, 1- \lfloor qN \rfloor \ldots, N-1- \lfloor qN \rfloor$ and will be given below in great detail.

    For convenience, let
    \begin{equation}\beta=\frac{\gamma}{1-q}.\end{equation}
    With the choice of $z_1,z_2$, use the notation in Lemma \ref{lem-g} and we write
    \begin{align}
      &\log\Big(\frac{\Gamma^{M}(N - \lfloor qN \rfloor) }{\Gamma^{M}(N - \lfloor qN \rfloor - t)} (z_{1} \bar{z}_{2})^{-t}\Big)=
     -
      \frac{M}{N - \lfloor qN \rfloor} g(N - \lfloor qN \rfloor, t)\nonumber
      \\
       & - t M \Big(N - \lfloor qN \rfloor -  \psi(N - \lfloor qN \rfloor)+\log\big(1  + \frac{v_1-\frac{\beta}{4}}{\sqrt{\gamma MN}} \big)\big(1  + \frac{v_2-\frac{\beta}{4}}{\sqrt{\gamma MN}} \big)\Big).\label{ginibre-2-int}
    \end{align}
    By   statement \ref{lem-g3} in Lemma \ref{lem-g} and the approximation of the digamma function
    \cite[5.11.1]{Olver10}
     \begin{align}
      &\psi(z)=\log z -\frac{1}{2z}+\bigO\big(\frac{1}{z^2}\big),  \quad z \to \infty, \label{asy-psi}
    \end{align}
    we have    for  $\abs{t} =\bigO(N^{\frac{1}{4}})$ 
    \begin{align}
      &\log\Big(\frac{\Gamma^{M}(N - \lfloor qN \rfloor) }{\Gamma^{M}(N - \lfloor qN \rfloor - t)} (z_{1} \bar{z}_{2})^{-t}\Big)\nonumber\\
      &=-\frac{M}{2(N - \lfloor qN \rfloor)} t^{2} - \sqrt{\frac{M}{\gamma N}} (v_{1} + \bar{v}_{2} + o(1)) t + \bigO(N^{-\frac{1}{4}}). \label{asy-crit-1}
    \end{align}


    Now let's  specify the contour $\Sigma$ in \eqref{sum-2-int} as a rectangular contour  with four vertices
 $$ -\lfloor qN \rfloor - \frac{1}{2} \pm \frac{i}{8}, \quad N - \lfloor qN \rfloor - \frac{1}{2}\pm \frac{i}{8}.$$  Precisely,   let's define
    \begin{equation}\label{crit-sum-contours-1}
      \begin{gathered}
       \Sigma_{1}^{\pm} = \big\{x \pm \frac{i}{8}: x \in [-\lfloor N^{\frac{1}{4}} \rfloor + \frac{1}{2}, \lfloor N^{\frac{1}{4}} \rfloor - \frac{1}{2}]\big\},\\
        \Sigma_{2}^{\pm} = \big\{x \pm \frac{i}{8}: x \in [-\lfloor qN \rfloor - \frac{1}{2}, -\lfloor N^{\frac{1}{4}} \rfloor + \frac{1}{2}]\big\}, \\
          \Sigma_{12}^{\pm} = \big\{x \pm \frac{i}{8}: x \in [ - \frac{1}{2}, \lfloor N^{\frac{1}{4}} \rfloor - \frac{1}{2}]\big\}, \\
        \Sigma_{3} = \big\{-\lfloor qN \rfloor - \frac{1}{2} + iy: y \in [-\frac{1}{8}, \frac{1}{8}]\big\},\\
         \Sigma_{4} = \big\{N - \lfloor qN \rfloor - \frac{1}{2} + iy: y \in [-\frac{1}{8}, \frac{1}{8}]\big\},\\
        \Sigma_{5}^{\pm} = \big\{x \pm \frac{i}{8}: x \in [\lfloor N^{\frac{1}{4}} \rfloor - \frac{1}{2}, N - \lfloor qN \rfloor - \frac{1}{2}]\big\},
      \end{gathered}
    \end{equation}
 we  then choose
  \begin{equation}\label{crit-sum-contours-bulk}
    \Sigma= \Sigma_{1}^{+} \cup  \Sigma_{2}^{+}\cup  \Sigma_{3}\cup  \Sigma_{2}^{-}\cup  \Sigma_{1}^{-}\cup  \Sigma_{5}^{-}\cup  \Sigma_4 \cup \Sigma_{5}^{+}
  \end{equation}
  if $q \in (0, 1)$, while
  \begin{equation}\label{crit-sum-contours-edge}
    \Sigma= \Sigma_{12}^{+} \cup    \Sigma_{3}\cup   \Sigma_{12}^{-}\cup  \Sigma_{5}^{-}\cup  \Sigma_4 \cup \Sigma_{5}^{+}
  \end{equation}
  if $q=0$.
So as $M+N\to \infty$ and $M/N\to \gamma$, when  $q \in (0, 1)$, we have from \eqref{asy-crit-1} that

    \begin{align}   
      &\int_{\Sigma_{1}^{+} \cup   \Sigma_{1}^{-} } \frac{dt}{2 \pi i} \frac{\Gamma^{M}(N - \lfloor qN \rfloor) (z_{1} \bar{z}_{2})^{-t} (-1)^{t}\pi}{\Gamma^{M}(N - \lfloor qN \rfloor - t) \sin \pi t}\nonumber\\
      &\to \int_{\mathds{R}\pm \frac{i}{8}} \frac{dt}{2 \pi i} \frac{(-1)^{t}\pi}{\sin \pi t} e^{- \frac{1}{2} \beta t^{2} - t (v_1+\bar{v}_2)}\nonumber\\
     &=   \sum_{j = - \infty}^{\infty} e^{- \frac{1}{2} \beta  j^{2} - j(v_1+\bar{v}_2) },
    \end{align} uniformly for $v_1,v_2$ in a compact set of $\mathbb{C}$.
   Here the Cauchy's residue theorem has been used to obtain the equality. Similarly,  when  $q = 0$  we have
    \begin{align}
      &\int_{\Sigma_{12}^{+} \cup \Sigma_{3}\cup   \Sigma_{12}^{-} } \frac{dt}{2 \pi i} \frac{\Gamma^{M}(N - \lfloor qN \rfloor) (z_{1} \bar{z}_{2})^{-t} (-1)^{t}\pi}{\Gamma^{M}(N - \lfloor qN \rfloor - t) \sin \pi t}\nonumber\\
      &\to   \int_{\Sigma_{3}\cup \{x\pm \frac{i}{8}: x \in [-\frac{1}{2},\infty)\}} \frac{dt}{2 \pi i} \frac{(-1)^{t}\pi}{\sin \pi t} e^{- \frac{1}{2} \beta t^{2} - t (v_1+\bar{v}_2)}\nonumber\\
     &=   \sum_{j = 0}^{\infty} e^{- \frac{1}{2} \beta  j^{2} - j(v_1+\bar{v}_2) },
    \end{align}
     uniformly for $v_1,v_2$ in a compact set of $\mathbb{C}$.

   The remaining task is to prove that  the remainder of the integral in  \eqref{sum-2-int} is negligible. For $q\in (0,1)$, using statement \ref{lem-g3} in Lemma \ref{lem-g}, we have from \eqref{ginibre-2-int} that
    \begin{align}
      \max_{t \in \Sigma_{2}^{+}\cup  \Sigma_{3}\cup  \Sigma_{2}^{-}\cup  \Sigma_{5}^{-}\cup  \Sigma_4 \cup \Sigma_{5}^{+}}\abs{\frac{\Gamma^{M}(N - \lfloor qN \rfloor) (z_{1} \bar{z}_{2})^{-t}}{\Gamma^{M}(N - \lfloor qN \rfloor - t)}}
      \leq e^{-\frac{1}{4} \beta \sqrt{N}}.
    \end{align}
    Note that $|(-1)^t /\sin\pi t|$ has an upper bound independent of $M,N$ whenever $t$ belongs to the chosen contour, as a consequence, we obtain
    \begin{align}
      \abs{\int_{
     \Sigma_{2}^{+}\cup  \Sigma_{3}\cup  \Sigma_{2}^{-}\cup  \Sigma_{5}^{-}\cup  \Sigma_4 \cup \Sigma_{5}^{+}
      } \frac{dt}{2 \pi i} \frac{\Gamma^{M}(N - \lfloor qN \rfloor) (z_{1} \bar{z}_{2})^{-t} (-1)^{t}}{\Gamma^{M}(N - \lfloor qN \rfloor - t) \sin \pi t}} \leq e^{- \frac{1}{8} \beta \sqrt{N}}
    \end{align}
    when $M$ and $N$ are  sufficiently large.
Similarly, for $q = 0$ we have
    \begin{align}
      \abs{\int_{
       \Sigma_{5}^{-}\cup  \Sigma_4 \cup \Sigma_{5}^{+}
      } \frac{dt}{2 \pi i} \frac{\Gamma^{M}(N - \lfloor qN \rfloor) (z_{1} \bar{z}_{2})^{-t} (-1)^{t}}{\Gamma^{M}(N - \lfloor qN \rfloor - t) \sin \pi t}} \leq e^{- \frac{1}{8} \beta \sqrt{N}}
    \end{align}
    when $M, N$ are  sufficiently large.

 In short,  as $N\to \infty$  we have  uniformly for $v_1,v_2$ in a compact set of $\mathbb{C}$
    \begin{align}\label{asy-crit-1-sum}
      &(z_{1} \bar{z}_{2})^{\lfloor qN \rfloor-N+1} \Gamma^{M}(N - \lfloor qN \rfloor) T_{M,N}\nonumber \\
      &\longrightarrow  \begin{cases}
        \sum_{j = - \infty}^{\infty} e^{- \frac{1}{2} \beta  j^{2} - j(v_1+\bar{v}_2) }, &  q \in (0, 1),\\
     \sum_{j = 0}^{\infty} e^{- \frac{1}{2} \beta  j^{2} - j(v_1+\bar{v}_2) }, & q = 0.
      \end{cases}
    \end{align}

 {\bf Step 2: Asymptotics for the weight function.}

 Changing  $s\to -s- (N - \lfloor qN \rfloor)$
  in the weight function $w(z_{1})$ gives
   \begin{align}
      w(z_{1})= \abs{z_{1}}^{-2(N - \lfloor qN \rfloor)} \int_{- i\infty}^{i\infty} \frac{ds}{2 \pi i}
      \Gamma^{M}(N - \lfloor qN \rfloor - s) |z_1|^{2s},
    \end{align}
    Let $s=iy$, we can derive a similar result to  \eqref{upperbound}
    \begin{align}
      &\abs{\frac{\Gamma^{M}(N - \lfloor qN \rfloor -s)}{\Gamma^{M}(N - \lfloor qN \rfloor)} \abs{z_{1}}^{2s}}
      \leq \exp\Big\{-\frac{M\abs{y}}{4} \min\Big(1,\frac{\abs{y}}{N - \lfloor qN \rfloor}\Big)\Big\}
    \end{align}
    from which
    \begin{align}
      &\abs{\Big(\int_{- i \infty}^{-i N^{\frac{1}{4}}} + \int_{i N^{\frac{1}{4}}}^{i \infty}\Big) \frac{ds}{2 \pi i}
     \frac{\Gamma^{M}(N - \lfloor qN \rfloor -s)}{\Gamma^{M}(N - \lfloor qN \rfloor)}  \abs{z_{1}}^{2 s}}\nonumber\\
      &\leq 2  \bigg(\int_{N^{\frac{1}{4}}}^{N - \lfloor qN \rfloor}dy  e^{-\frac{M}{4(N - \lfloor qN \rfloor)} y^{2}}  + \int_{N - \lfloor qN \rfloor}^{\infty}dy  e^{-\frac{M}{8}  y}\bigg)\nonumber\\
      &\leq 2  \bigg(N^{-\frac{1}{4}}e^{-\frac{M\sqrt{N}}{4(N - \lfloor qN \rfloor)}  }  +    \frac{8}{M} e^{-\frac{1}{8}M(N - \lfloor qN \rfloor)}\bigg). \label{2-weight-1}
    \end{align}
    When $|s|\leq N^{\frac{1}{4}}$, we can proceed as in \eqref{asy-crit-1}, take a Taylor expansion and get
    \begin{align}
      &\int_{- i N^{\frac{1}{4}}}^{i N^{\frac{1}{4}}} \frac{ds}{2 \pi i} \frac{\Gamma^{M}(N - \lfloor qN \rfloor - s)}{\Gamma^{M}(N - \lfloor qN \rfloor)} \abs{z_{1}}^{2 s} \nonumber\\
      &\longrightarrow   \int_{- i \infty}^{i \infty} \frac{ds}{2 \pi i} e^{\frac{1}{2} \beta s^{2} + s(v_1+\bar{v}_1)}=\frac{1}{\sqrt{2\pi \beta}} e^{-\frac{1}{2\beta}(v_1+\bar{v}_1)^2}.\label{2-weight-2}
    \end{align}
    Together with \eqref{2-weight-1}, we have
      \begin{align}
      &\frac{\abs{z_{1}}^{2(N - \lfloor qN \rfloor)}}{\Gamma^{M}(N - \lfloor qN \rfloor)}  w(z_1)
     \longrightarrow   \frac{1}{\sqrt{2\pi \beta}} e^{-\frac{1}{2\beta}(v_1+\bar{v}_1)^2}.\label{2-weight-3}
    \end{align}

  Combining \eqref{asy-crit-1-sum} and \eqref{2-weight-3}, we get an  approximation of the correlation  kernel, uniformly for $v_1, v_2$ in a compact subset of $\mathbb{C}$,        \begin{align}
       K_{M, N}(z_{1}, z_{2}) = \frac{1}{\sqrt{2\pi^3 \beta}}    \frac{(z_{1}\bar{z}_{2})^{N-\lfloor qN \rfloor-1}}{\abs{z_{1}z_{2}}^{N-\lfloor qN \rfloor}}  e^{\frac{1}{4\beta}((v_{2}^2-\bar{v}_{2}^2)-(v_{1}^2-\bar{v}_{1}^2))}  (1+o(1)) \nonumber\\
       \times  \begin{cases}
         \mathrm{K^{(bulk)}_{\mathrm{crit}}}(\frac{\gamma}{1-q};v_1,v_2), & q \in (0, 1),\\
     \mathrm{K^{(edge)}_{\mathrm{crit}}}(\gamma;v_1,v_2), & q = 0,
      \end{cases}
    \end{align}
from which   Theorem \ref{ginibre-2}   immediately follows.
  \end{proof}

\subsection{Proof of Theorem \ref{ginibre-3}} When $M$ is finite, the local statistics for Model A  in the bulk and at the soft edge has been proved to be the same as in a single Ginibre matrix, see \cite{AB12,LiuWang16}.  In order to tackle  the difficulty  of how to obtain asymptotics for  the truncated series as $N\to \infty$,
we need to introduce  a different method  with those used   in the  proofs of Theorems  \ref{ginibre-1} and  \ref{ginibre-2}, which  works well  for any integer  $M$ such that $M/N\to 0$ as $N\to \infty$.  The  key point  is  to combine the  Euler-Maclaurin summation formula (see e.g. \cite[Theorem 1]{Lampret}) and the  steepest  decent  argument.

 First, we  state a general result  that is used to deal with   asymptotics  for a finite sum of truncated series depending on two parameters $M$ and $N$.
  \begin{lem}\label{lem-eulmac}
   Given  two non-negative integers $M$ and $N$,   for a parameter $\xi\in \mathbb{C}$ let $f_{M,N}(t):=f_{M,N}(\xi,t)$ be a  complex-valued function of $t$  over $[0, \infty)$ such that  the following conditions are satisfied:
    \begin{enumerate}[label = (A\arabic*)]
      \item\label{lem-eulmac-st-1} There exists some constant $\delta \in (0, 1)$ such that for large $N$ $\Re{f_{M,N}(t)}$ attains  its minimum over  $[0, \delta N]$  at $\delta N$.
      \item\label{lem-eulmac-st0} For   $t \in [\delta/2, 1]$,   $f_{M,N}(tN)$ can be rewritten as            \begin{align}
            f_{M,N}(tN) &= MN f(t) +  \sqrt{MN}\xi t  + Q_{M,N}(\xi, t), \quad \label{lem-eulmac-expansion}
          \end{align}
        such that  as $N\to \infty$ and $M/N\to 0$,        $ |Q_{M,N}(\xi,t)| = \bigO(\alpha_{M,N})$ and   $ |Q'_{M,N}(\xi,t)| =\bigO( \alpha_{M,N})$ with  a positive sequence  $\alpha_{M,N}= o(\sqrt{MN})$,
        uniformly for  $t \in [\delta/2, 1]$ and $\xi$ in  any compact set  of $\mathbb{C}$. 

      \item\label{lem-eulmac-st1} $f(t)$ is a real-valued continuous function and $f'(t), f''(t),f'''(t)$ are  continuous over $[\delta/2, 1]$;  $f(t)$ has a unique minimum at $t_{0} \in (\delta, 1]$ such that  $f'(t_{0}) = 0$ and $f''(t_{0}) > 0$.
    \end{enumerate}
   If  $ M/N \to 0$   as $M+N \to \infty$, then  we have
    \begin{align} \label{sumbasic}
      \sum_{j=0}^{N-1} e^{-f_{M,N}(j)} &= e^{-f_{M,N}(t_{0}N)} \frac{\sqrt{2\pi N}}{\sqrt{Mf''(t_{0})}} e^{- \frac{1}{2 f''(t_{0})} \xi^{2}} 
    \nonumber  \\
      &\quad \times \begin{cases}
       \big(1 + \bigO\big( \frac{ 1}{\sqrt{MN}}\big)\big), &  t_{0} \in (0, 1),\\
        \frac{1}{2} \mathrm{erfc}(\frac{-\xi}{\sqrt{2 f''(t_{0})}})
        \big (1 + \bigO\big(  \frac{1}{\sqrt{MN}} \big)\big), &  t_{0}=1,
      \end{cases}
    \end{align}
    uniformly for  $\xi$ in  a compact set  of $\mathbb{C}$.
  \end{lem}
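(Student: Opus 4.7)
The plan is to reduce the finite sum to a real integral via the Euler--Maclaurin formula and then to carry out a standard Laplace / steepest-descent analysis at the saddle $t_{0}$ singled out by hypothesis (A3). The two branches of the claimed asymptotic formula will correspond to whether the rescaled Laplace variable ranges over all of $\mathbb{R}$ (interior case $t_{0}\in(0,1)$, full Gaussian) or over the half-line $u\leq 0$ (edge case $t_{0}=1$), the latter producing the $\tfrac{1}{2}\mathrm{erfc}$ factor.

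\textbf{Step 1 (tail discard).} By (A1), $\Re f_{M,N}(j)\geq \Re f_{M,N}(\delta N)$ for $j\in[0,\lfloor\delta N\rfloor]$, so the partial sum on this range is bounded by $(\delta N+1)\,e^{-\Re f_{M,N}(\delta N)}$. Invoking (A2) at $t=\delta$ and the strict inequality $f(\delta)>f(t_{0})$ from (A3), one sees that this bound is smaller than the claimed main term by a factor $e^{-cMN}$ for some $c>0$, hence discardable.

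\textbf{Step 2 (Euler--Maclaurin + Laplace).} On the remaining range $\lfloor\delta N\rfloor<j\leq N-1$, set $g(t)=e^{-f_{M,N}(t)}$ and apply the first-order Euler--Maclaurin identity, whose remainder is controlled by $\int |g'(t)|\,dt$. Using (A2), the bound $|Q'_{M,N}|=\bigO(\alpha_{M,N})=o(\sqrt{MN})$ together with the Laplace-scale behavior $|f'_{M,N}|=\bigO(\sqrt{MN})$ near the saddle gives an Euler--Maclaurin remainder of relative size $\bigO(1/\sqrt{MN})$. Substituting $t=sN$ and then $s=t_{0}+u/\sqrt{MN}$, hypothesis (A3) together with $f'(t_{0})=0$ and the bounds on $Q_{M,N}$ in (A2) yield
\[
-f_{M,N}(sN)=-f_{M,N}(t_{0}N)-\tfrac{1}{2}f''(t_{0})u^{2}-\xi u+\bigl(o(1)+\bigO(|u|^{3}/\sqrt{MN})\bigr)
\]
uniformly for $|u|\leq (MN)^{1/6}$; the tails $|u|>(MN)^{1/6}$ are exponentially small thanks to the strict minimality of $f$ at $t_{0}$ provided by (A3). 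Completing the square in $u$ evaluates the resulting Gaussian integration, over $\mathbb{R}$ when $t_{0}\in(0,1)$ and over $(-\infty,0]$ when $t_{0}=1$; the latter produces exactly the $\tfrac{1}{2}\mathrm{erfc}\bigl(-\xi/\sqrt{2f''(t_{0})}\bigr)$ factor appearing in \eqref{sumbasic}, while the Jacobian $N/\sqrt{MN}$ combined with the Gaussian normalization gives the announced prefactor $\sqrt{2\pi N/(Mf''(t_{0}))}$.

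\textbf{Main obstacle.} The delicate point is propagating the various error terms -- the tail discard in Step 1, the Euler--Maclaurin remainder, the Taylor remainder of $f$, and the fluctuation $Q_{M,N}$ -- so that the cumulative correction factor is $1+\bigO(1/\sqrt{MN})$. The Euler--Maclaurin remainder in particular requires uniform control on $f'_{M,N}$ throughout a Laplace-sized neighborhood of $t_{0}$, not merely at $t_{0}$ itself; this is exactly what (A2) provides by giving simultaneous bounds on $Q_{M,N}$ and $Q'_{M,N}$. Once these uniform controls are in place, the Laplace step is classical, and the only additional care needed is the edge case $t_{0}=1$, where one must track the position of the shifted saddle $t^{*}\approx t_{0}-\xi/(\sqrt{MN}f''(t_{0}))$ to confirm that the Laplace window still lies in $[\delta/2,1]$ and that the truncated Gaussian integral produces the $\mathrm{erfc}$ factor.
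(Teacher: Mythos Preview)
Your outline follows the paper's proof: discard $[0,\lfloor\delta N\rfloor]$ via (A1), apply first-order Euler--Maclaurin on $[\lfloor\delta N\rfloor,N]$, then evaluate the resulting integral by Laplace's method at $t_0$ after substituting $s=t_0+u/\sqrt{MN}$, with the half-line integration at $t_0=1$ producing the $\tfrac12\mathrm{erfc}$ factor.

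The one step where your reasoning is not correct as written is the Euler--Maclaurin remainder. The assertion ``$|f'_{M,N}|=\bigO(\sqrt{MN})$ near the saddle gives a remainder of relative size $\bigO(1/\sqrt{MN})$'' does not follow: a bound $|f'_{M,N}|\le C$ on the integrand of $\int(\lfloor t\rfloor-t+\tfrac12)f'_{M,N}(t)e^{-f_{M,N}(t)}\,dt$ yields a remainder of relative size $\bigO(C)$, not $\bigO(1/C)$. The paper's argument is to split this remainder integral into near-saddle and far-from-saddle pieces exactly as for the main integral, and to use that $f'_{M,N}(sN)=Mf'(s)+\bigO(\sqrt{M/N})$ with $f'(s)=\bigO(|s-t_0|)$ near $t_0$; after rescaling $s=t_0+u/\sqrt{MN}$ the extra factor is $\bigO(\sqrt{M/N})(1+|u|)$, whose Gaussian moments are bounded, so the remainder has relative size $\bigO(\sqrt{M/N})$, not $\bigO(1/\sqrt{MN})$. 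You must also treat the Euler--Maclaurin boundary term $\tfrac12\,e^{-f_{M,N}(N)}$ separately: when $t_0=1$ it is \emph{not} exponentially negligible and is itself of relative size $\bigO(\sqrt{M/N})$ compared to the main term.
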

  \begin{proof}
  Noting $\delta \in (0, t_{0})$,   let's divide the sum on the left-hand side of \eqref{sumbasic}   into two parts according to the index  $j< \lfloor \delta N \rfloor$ or $j\geq\lfloor \delta N \rfloor$,  apply the Euler-Maclaurin summation formula (see e.g.  \cite[Theorem 1]{Lampret}) 
   to the second part and  we  obtain
    \begin{align}
      \sum_{j=\lfloor \delta N \rfloor}^{N - 1} e^{-f_{M,N}(j)}
      &= \int_{\lfloor \delta N \rfloor}^{N} e^{-f_{M,N}(t)} dt + \int_{\lfloor \delta N \rfloor}^{N} \big(\lfloor t \rfloor - t + \frac{1}{2}\big) f_{M,N}'(t) e^{-f_{M,N}(t)} dt \nonumber\\
      &\quad + \frac{1}{2} \big(e^{-f_{M,N}(\lfloor \delta N \rfloor)} - e^{-f_{M,N}(N)}\big)\nonumber\\
      &=: I_{1} + I_{2} + I_{3}.
    \end{align}

    Using the change of variables  $t \to t N$ and   by the assumption \ref{lem-eulmac-st0},  we divide $I_1$ into  two parts  \begin{align}
      I_{1} &
      = \Big(\int_{t_{0} - \epsilon}^{\min\{t_{0}+\epsilon,1\}} dt + \int_{[\frac{1}{N}\lfloor \delta N \rfloor, 1] \backslash (t_{0}-\epsilon, t_{0} + \epsilon)} dt\Big)\, N e^{-MNf(t) - \sqrt{MN}  \xi t-  Q_{M,N}(\xi, t)}\nonumber\\
      &=:I_{11} + I_{12} \label{eulmac-i10}
    \end{align}
    By the assumption \ref{lem-eulmac-st1}, for any small $\epsilon > 0$  there exists $C_\epsilon>0$ such that
    \begin{align}
      \inf_{t \in [\lfloor \delta N \rfloor/N, 1] \backslash (t_{0} - \epsilon, t_{0} + \epsilon)}(f(t) - f(t_{0})) \geq C_\epsilon.    \end{align}
  Together with  the assumption on $Q_{M,N}(\xi,t)$, we have for  $N$ sufficiently  large
    \begin{align}
      \abs{I_{12}} \leq  e^{-f_{M,N}(t_{0}N)}   e^{-\frac{1}{2}C_\epsilon MN}.\label{eulmac-i12}
    \end{align}
   For $I_{11}$,  taking  the Taylor expansion of $f(t)$ at   $t_{0}$  and using the assumptions  on $Q_{M,N}(\xi,t), Q'_{M,N}(\xi,t)$,   we have
    \begin{align}
      I_{11}&= N  e^{-f_{M,N}(t_{0}N)}  \int_{t_{0} - \epsilon}^{\min\{t_{0}+\epsilon,1\}} dt\,
      e^{- \frac{1}{2} MNf''(t_{0}) (t-t_{0})^{2} }e^{- \sqrt{MN} \xi (t - t_{0})  }
      \nonumber \\
      &\quad\times  e^{  \bigO(MN)(t-t_{0})^{3} -    \bigO(\alpha_{M,N})(t-t_{0})}.
    \end{align}
    Use the change of variables  $t \to t_0+t/\sqrt{MN}$ and we   see from the standard   steepest  decent  argument that
       \begin{align}
      I_{11}&=  e^{-f_{M,N}(t_{0}N)} \sqrt{\frac{N}{M}}
 \int_{- \epsilon \sqrt{MN} }^{\min\{\epsilon,1-t_{0}\}\sqrt{MN} } e^{- \frac{1}{2}  f''(t_{0}) t^{2} -  \xi t + \bigO( \frac{1}{\sqrt{MN}}) t^{3} + \bigO( \frac{\alpha_{M,N}}{\sqrt{MN}} )t}\nonumber\\
      &= \frac{\sqrt{2\pi N}}{\sqrt{Mf''(t_{0})}} e^{-f_{M,N}(t_{0}N)}  e^{- \frac{1}{2 f''(t_{0})}\xi^{2}} \nonumber\\
      &\quad \times \begin{cases}
        1 + \bigO\big(\frac{\alpha_{M,N}}{\sqrt{MN}}\big)+\bigO\big(\frac{1}{\sqrt{MN}}\big), &  t_{0} \in (0, 1),\\
        \frac{1}{2} \mathrm{erfc}(\frac{-\xi}{\sqrt{2 f''(t_{0})}})
        \big(1 + \bigO\big(\frac{\alpha_{M,N}}{\sqrt{MN}}\big)+\bigO\big(\frac{1}{\sqrt{MN}}\big)\big), & t_{0}=1.
      \end{cases}\label{eulmac-i11}
    \end{align}
    Thus, combine \eqref{eulmac-i12} and   \eqref{eulmac-i11} and   we obtain
     \begin{align}
      I_{1}      &= \frac{\sqrt{2\pi N}}{\sqrt{Mf''(t_{0})}} e^{-f_{M,N}(t_{0}N)}  e^{- \frac{1}{2 f''(t_{0})}\xi^{2}} \nonumber\\
      &\quad \times \begin{cases}
        1 + \bigO\big(\frac{\alpha_{M,N}}{\sqrt{MN}}\big)+\bigO\big(\frac{1}{\sqrt{MN}}\big), &  t_{0} \in (0, 1),\\
        \frac{1}{2} \mathrm{erfc}(\frac{-\xi}{\sqrt{2 f''(t_{0})}})
        \big(1 + \bigO\big(\frac{\alpha_{M,N}}{\sqrt{MN}}\big)+\bigO\big(\frac{1}{\sqrt{MN}}\big)\big), &  t_{0}=1,
      \end{cases}\label{eulmac-i1}
    \end{align}
uniformly for $\xi$ in  a compact set  of $\mathbb{C}$.

Similarly, in order to estimate $I_2$, we    divide  the integral $I_2$ into two corresponding parts  like in $I_1$ and write $I_2=I_{21}+I_{22}$. Note that  by the assumption as   $N\to \infty$ and $M/N\to 0$
    \begin{align}
      f_{M,N}'(tN) = Mf'(t) + \bigO(\sqrt{M/N})
    \end{align}
     uniformly for all $t \in [\delta/2, 1]$.  Compare the integrals $I_{22}$ and $I_{12}$,  
      we easily get
    \begin{align}
      \abs{I_{22}} \leq \bigO(M) \abs{I_{12}}.
    \end{align}
   Compare  $I_{22}$ and $I_{12}$, note $f'(t)=\bigO(|t-t_0|)$ when $|t-t_0|<\epsilon$, we
 get
    \begin{align}
      \abs{I_{21}} \leq \bigO(\sqrt{M/N}) \abs{I_{11}}.
    \end{align}
   Furthermore,  put two parts together and    we know
    \begin{align}
      \abs{I_{2}} \leq \bigO(\sqrt{M/N}) \abs{I_{1}}. \label{eulmac-i2}
    \end{align}

    For $I_3$,  it is easy to derive from  the asymptotics  of  $ f_{M,N}(\lfloor \delta N \rfloor )$ and   $ f_{M,N}(N)$ (cf. \eqref{eulmac-i10}) that
    \begin{align}
      |e^{-f_{M,N}(\lfloor \delta N \rfloor )}|\leq \bigO(1) |I_{12}|  \label{I3-1}
    \end{align}
    and
    \begin{align}
      |e^{-f_{M,N}( N)}|
      \leq \begin{cases}\bigO(1) |I_{12}|, & t_0\in (0,1),\\
        \bigO(\sqrt{M/N} ) |I_{11}|, & t_0=1,
      \end{cases}
    \end{align}
    from which  we know
    \begin{align}
      |I_{3}|\leq &\bigO\big(\sqrt{ M/N}\big) |I_{1}|. \label{eulmac-i3}
    \end{align}

    Finally,   using of   \eqref{I3-1} and the assumption  \ref {lem-eulmac-st-1}  give us      \begin{align}
      \Big|\sum_{j=0}^{\lfloor \delta N \rfloor - 1} e^{-f_{M,N}(j)}\Big|
      \leq \delta N e^{-\Re{f_{M,N}(\delta N)}} \leq \bigO(N) |I_{12}|
      \leq \bigO\big(\sqrt{ M/N}\big) |I_{1}|. \label{eulmac-i0}
    \end{align}

    Combining \eqref{eulmac-i1}, \eqref{eulmac-i2}, \eqref{eulmac-i3} and \eqref{eulmac-i0},   we obtain the desired result.
  \end{proof}

  \begin{proof}[Proof of Theorem \ref{ginibre-3}]  We calculate asymptotics  for the weight function and the finite sum of the correlation kernel.     For convenience,  let \begin{equation}\varphi_j=1+\frac{v_j}{u\sqrt{MN}}, \quad j=1,2.\end{equation}

    {\bf Step 1: Asymptotics for the weight function.}

  Use a  change of variables $s\to -sN$ in   \eqref{ginibreweight} and the stirling formula, let $s_{0} = \abs{u}^{2}>0$ and the weight function is rewritten   as
    \begin{align}
      w(z_{1})
      = N \Big(\frac{2 \pi}{s_{0}N}\Big)^{\frac{M}{2}}  e^{MN f(s_0) - s_0 M N \log \abs{ \varphi_j}^{2}} J,  \label{3-weight-0}
    \end{align} where  for every  $\delta_0>0$
       \begin{align}
  J &=
       \int_{s_0 -i \infty}^{s_0 + i \infty} \frac{ds}{2 \pi i} e^{MN( f(s)-f(s_0)) - (s-s_0) M N \log \abs{ \varphi_j}^{2} - \frac{M}{2} \log \frac{s}{s_0} + \bigO(\frac{M}{s N})}\\
       &=  \Big(\int_{s_0 -i \delta_0}^{s_0 + i \delta_0} \frac{ds}{2 \pi i} +\int_{s_0 - i \infty}^{s_0 -i \delta_0} \frac{ds}{2 \pi i}+ \int_{s_0 +i \delta_0}^{s_0 + i \infty} \frac{ds}{2 \pi i} \Big)\Big(\cdot\Big)=:J_{0}+J_{-}+J_{+},
    \end{align}
    with
    \begin{align}
      f(s) = s \log s - s - s \log\abs{u}^{2}. \label{3-f}
    \end{align}
    It is easy to see  that  $f'(s)=\log s- \log\abs{u}^{2}=0$ has a unique solution $s=s_0$ and  with $s=s_{0}+iy$
     \begin{align}
        \frac{\partial}{\partial y} \Re{f (s_0 + iy)}&=-\Im f'(s_0 + iy)=-\arctan\frac{y}{s_0}\nonumber\\
      & \begin{cases}\leq - \arctan\frac{\delta_0}{s_0}, &y\geq \delta_0\\
     \geq  \arctan\frac{\delta_0}{s_0}, &y\leq -\delta_0\end{cases}.
    \end{align}
    So $\Re{f (s_0 + iy)}$ with $y\in \mathds{R}$ attains  its  unique  maximum $y=0$, together with the standard steepest decent argument (see e.g. \cite{Wong}) implying that for sufficiently large $N$
          \begin{align}
 |J_{\pm}|\leq  e^{\frac{1}{2}MN\Re \{f(s_0\pm i\delta_0)-f(s_0)\}}. \label{3-weight-1}
    \end{align}

    Choose a sufficiently small $\delta_0>0$ and  take a Taylor expansion at $s=s_0$, noting $f''(s_{0}) = \abs{u}^{-2}$ and the assumption of  $M/N\to 0$, we have
        \begin{align}
  J_0 &=  \frac{1}{ \sqrt{MN}}\int_{-i \delta_0 \sqrt{MN}}^{ i \delta_0 \sqrt{MN}} \frac{ds}{2 \pi i}
        e^{\frac{1}{2}f''(s_0)s^2 - s  (\frac{v_1}{u}+\frac{\bar{v}_1}{\bar{u}})} \Big(1+ \bigO\Big(\sqrt{\frac{M}{N}}\Big)\Big)\nonumber\\
        &=\frac{|u|}{ \sqrt{2\pi MN} } e^{-\frac{1}{2} (v_1 e^{-i \theta} + \bar{v}_1 e^{i \theta})^{2}} \Big(1+ \bigO\Big(\sqrt{\frac{M}{N}}\Big)\Big). \label{3-weight-2}
    \end{align}

     Combination of  \eqref{3-weight-0}, \eqref{3-weight-1} and \eqref{3-weight-2}  gives rise to
    \begin{align}
      w(z_{1}) &=
       e^{MN f(|u^2|)}  \abs{ \varphi_1}^{-2MN|u^2|} e^{-\frac{1}{2} (v_1 e^{-i \theta} + \bar{v}_1 e^{i \theta})^{2}} \nonumber \\
       & \times  \Big(\frac{2 \pi}{N|u^2|}\Big)^{\frac{M}{2}} \sqrt{\frac{N|u^2|}{2\pi M}}\Big(1+ \bigO\Big(\sqrt{\frac{M}{N}}\Big)\Big), \label{weightest}
    \end{align}
    uniformly for $v_1,v_2$ in a compact set of $\mathds{C}$.

    {\bf Step 2: Asymptotics for the finite sum.}

    Write
    \begin{align}
      T_{M,N} = \sum_{j=0}^{N-1} e^{-f_{M,N}(j)},
    \end{align}
    where
    \begin{align}
      f_{M,N}(t) = M \big( \log\Gamma(t+1)-t \log \big(N\abs{u}^{2} \big)- t\log \varphi_1 \bar{\varphi}_2    \big).
    \end{align}
    Let $\delta\in (0,|u|^2)$. Note that for $t \in (0, \delta N)$
    \begin{align}
      \Re f_{M,N}'(t)
      &= M\big(\psi(t+1) - \log (N\abs{u}^{2} )- \log |\varphi_1 \bar{\varphi}_2| \big)<0
    \end{align}
    when $N$ is large. This shows that $\Re f_{M,N}(t)$ obtains its minimum at $\delta N$ for $t \in [0, \delta N]$,  so the condition \ref{lem-eulmac-st-1} is satisfied.
Meanwhile, applying the Stirling formula we obtain
    \begin{align}
      f_{M,N}(tN) &= MNf(t) - t {MN} \log(\varphi_1 \bar{\varphi}_2) + \frac{M}{2} \log t + \bigO(MN^{-1})\\
      f_{M,N}'(tN) &= M(f'(t) + \bigO((MN)^{-\frac{1}{2}}))
    \end{align}
    hold uniformly for $t \in [\delta/2,1]$, which implies the condition \ref{lem-eulmac-st0}.

    Recalling   $f(t)$ in \eqref{3-f}, it is easy to see from  $f'(t)=\log t- \log |u|^2$ that $f'(s_0)=0$ and $t=s_0$ is a unique minimum point over $(\delta,1]$. It is obvious the $f''(s_{0}) = \abs{u}^{-2} > 0$. Thus \ref{lem-eulmac-st1} holds.

    Now, applying Lemma \ref{lem-eulmac} we get
    \begin{align}
      T_{M,N}&= N  \big(2 \pi N s_0\big)^{-\frac{M}{2}}  e^{-MN f(s_0)}  ( \varphi_1 \bar{\varphi}_2 )^{MNs_0} \frac{|u|}{ \sqrt{MN}} \Big(1+ \bigO\Big(\sqrt{\frac{M}{N}}\Big)\Big)\nonumber\\
      &\times  \sqrt{2\pi}  e^{\frac{1}{2} (v_1 e^{-i \theta} + \bar{v}_2 e^{i \theta})^{2}}
      \begin{cases}
        1, & |u|\in (0,1),\\
        \frac{1}{2} \mathrm{erfc}(\frac{v_1 e^{-i \theta} + \bar{v}_2 e^{i \theta}}{\sqrt{2}}), & |u|=1,
      \end{cases} \label{totalsum}
    \end{align}
    uniformly for $v_1,v_2$ in a compact set of $\mathbb{C}$ as $M + N$ tends to infinity.

    Putting \eqref{weightest} and  \eqref{totalsum}  together,   we obtain
    \begin{align}
      K_{M,N}(z_1,z_2)&=  \big(N |u^2|\big)^{-M}  \Big( \frac{ z_1 \bar{z}_2 }{|z_1 \bar{z}_2|}\Big)^{MN|u^2|} \frac{N|u^2|}{ M} \Big(1+ \bigO\Big(\sqrt{\frac{M}{N}}\Big)\Big)\nonumber\\
      &\times  \frac{1}{\pi}  e^{-\frac{1}{2} (|v_1|^2 +|v_2|^2 - v_1 \bar{v}_2) }\begin{cases}1, & |u|\in (0,1),\\
      \frac{1}{2} \mathrm{erfc}(\frac{v_1 e^{-i \theta} + \bar{v}_2 e^{i \theta}}{\sqrt{2}}), & |u|=1, \end{cases}
    \end{align}
    uniformly for $v_1,v_2$ in a compact set of $\mathbb{C}$. The desired result immediately follows.
  \end{proof}

\section{Products of truncated unitary matrices} \label{sect3}

  Let $U$ be chosen at random  from  the  $(N + L) \times (N + L)$  unitary group with the Haar measure,    the  upper left $N\times N$ corner  $T$  is 
  is  referred as a truncated unitary matrix, which was  first  introduced by \.{Z}yczkowski and Sommers \cite {zyczkowski2000truncations}.
   Our  second product model  is
  \begin{quote}
{\bf Model B}:  $\Pi_M=X_{M}\cdots X_{1}$   where  $X_{1}, \dotsc, X_{M}$ are  i.i.d.  truncated unitary matrices, chosen  from  the  upper left $N\times N$ corner  of a random   unitary  $(N + L) \times (N + L)$ matrix.  \end{quote}
 The eigenvalues of  $\Pi_M$   form a determinantal point process in the complex plane  with kernel \cite{adhikari2013determinantal,ABKN14}
  \begin{align}\label{trunckernel}
    K_{N}(z_{1}, z_{2}) = \frac{1}{\pi} \sqrt{w(z_{1}) w(z_{2})} T_{M,N}(z_{1}, z_{2}), \qquad z_{1}, z_{2} \in \mathbb{C},
  \end{align}
  where
  \begin{align}
    w(z)
    =  \int_{-c - i \infty}^{-c + i \infty}  \frac{ds}{2 \pi i} \frac{\Gamma^{M}(-s)}{\Gamma^{M}(L - s)} \abs{z}^{2s}\label{truncweight}
  \end{align}
  and
  \begin{align}
    T_{M, N} = \sum_{j=0}^{N-1} \frac{\Gamma^{M}(L + j +1)}{\Gamma^{M}(j + 1)} (z_{1} \bar{z}_{2})^{j}.\label{truncsum}
  \end{align}

For a single truncated unitary matrix, one needs to distinguish between  the weakly non-unitary limit  that $L$ is  fixed and the strong non-unitary  limit that  $L$ increases   proportionally with $N$  when  $N\to\infty$; see \cite {khoruzhenko2011, zyczkowski2000truncations}.
 In this section we only consider the  strong non-unitary case.

The key results on products of i.i.d. truncated unitary matrices are stated in the following three theorems.

  \begin{thm}\label{trunc-1}
    Assume  that $\lim_{M+N \to \infty} M/N = \infty$ and $\lim_{M+N \to \infty} L/N = \tau\in (0,\infty)$. Given $k \in \{1, \ldots, N\}$, let $\rho = 2/\sqrt{N(\psi'(N - k +1) - \psi'(L + N - k +1))}$ and  for $\theta \in (- \pi, \pi]$ set
    \begin{align}
      z_{j} &= e^{M(\frac{1}{2} (\psi(L + N - k + 1) - \psi(N - k + 1)) + \frac{v_{j}}{\rho \sqrt{MN}})} e^{i (\theta + \phi_{j})},\quad j = 1, \ldots, n.
    \end{align}
    Then limiting $n$-correlation functions for eigenvalues of $\Pi_M$ in Model B
    \begin{align}
      \lim_{M+N \to \infty}
        \Big(\frac{\sqrt{M} }{\rho\sqrt{ N}}\Big)^{n}  \abs{z_{1}\cdots z_{n}}^2 R^{(n)}_{M,N}(z_{1}, \ldots,z_{n})= \begin{cases}
          \frac{1}{(\sqrt{2\pi})^3}
          e^{-  \frac{1}{2} v_{1}^{2} }, &n=1,\\
          0, & n>1,
        \end{cases}
    \end{align}
    holds uniformly for $v_1, \ldots,v_n$ in a compact subset of $\mathds{R}$ and $\phi_1, \ldots,\phi_n\in (-\pi,\pi]$.
  \end{thm}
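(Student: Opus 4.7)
The plan is to mirror the two-step strategy of the proof of Theorem~\ref{ginibre-1}, analyzing the weight \eqref{truncweight} and the finite sum \eqref{truncsum} separately and then combining. The new structural ingredient in Model~B is a ratio of Gamma functions that shifts the saddle by $\psi(L + N - k + 1)$; the centering of $z_j$ in the theorem is chosen so that, after the index change $j \to N - k - j$ in the finite sum, the saddle again coincides with the integer $j = 0$, and the rescaling $\rho$ matches the variance $M(\psi'(N - k + 1) - \psi'(L + N - k + 1))$ that emerges from the quadratic expansion.

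The technical heart is a truncated analogue of Lemma~\ref{lem-g} applied to
\begin{equation*}
\tilde g(a, L; z) = a\bigl(\log \Gamma(a - z) - \log \Gamma(a + L - z) - \log \Gamma(a) + \log \Gamma(a + L) + z(\psi(a) - \psi(a + L))\bigr).
\end{equation*}
Monotonicity of $\Re \tilde g(a, L; x + iy_0)$ in $x$ on $(-\infty, 0]$ and $[\delta, a - \delta]$ follows from strict convexity of $\psi$ applied termwise to the series \eqref{eq:digamma_series}. The imaginary-axis estimate $\Re \tilde g(a, L; iy) \leq -c(\tau) |y| \min(a, |y|)$ holds uniformly in the regime $L/a$ bounded above and below, which is valid here since $L/(N - k + 1) \geq \tau$ throughout the range of $k$; it rests on the observation that $\log(1 + y^2/t^2)$ is decreasing in $t$, so subtracting the tail sum above $a + L$ from the untruncated sum leaves a strictly negative residual of the same order. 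The Taylor expansion at $z = 0$ gives $\tilde g(a, L; z) = \frac{1}{2} a(\psi'(a) - \psi'(a + L)) z^2 + \bigO(a^{-1/4})$ for $|z| = \bigO(a^{1/4})$.

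With this analogue in place, Steps~1 and~2 of the proof of Theorem~\ref{ginibre-1} transfer essentially verbatim. In the finite sum the $|j| \geq 1$ contributions are exponentially suppressed because the effective quadratic coefficient $M(\psi'(N - k + 1) - \psi'(L + N - k + 1))/2$ diverges in both subcases: when $N - k$ is bounded, $\psi'(L + N - k + 1) \to 0$ and the coefficient is $\sim M \psi'(N - k + 1)$; when $N - k \sim \alpha N$ the difference is of order $\tau/N$ and the coefficient is $\sim M \tau/N \to \infty$ by the hypothesis $M/N \to \infty$. The weight is treated by the change of variable $s \to s - (N - k + 1)$, contour truncation at $\pm i \delta$ using the imaginary-axis bound, and steepest descent at $s = 0$, producing a Gaussian factor $e^{-v_1^2/2}$. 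Combination yields an asymptotic for $K_{M, N}(z_1, z_2)$ with a Hadamard factorization identical in form to the Ginibre case, so all $n > 1$ determinantal minors vanish at leading order while $n = 1$ produces the claimed Gaussian. The principal obstacle is the $L$-uniform control in the truncated analogue of Lemma~\ref{lem-g}, particularly when both $a = N - k + 1$ and $L$ diverge with ratio determined by $\tau$; once this is settled, the remainder of the proof reduces to a transcription of the Ginibre argument.
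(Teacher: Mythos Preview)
Your overall strategy matches the paper's proof closely: the two-step decomposition, the index shift $j\to N-k-j$, the convexity argument for the finite sum, and steepest descent for the weight are all exactly what the paper does. The paper does not package the truncated estimates into a single lemma $\tilde g(a,L;z)$ but instead writes $f_N(t)-f_N(0)=g(N-k+1;t)-\tfrac{N-k+1}{L+N-k+1}g(L+N-k+1;t)$ and establishes convexity of $f_{M,N}$ directly via $f_{M,N}''(t)=(N-k+1)\sum_{j=1}^{L-1}(N-k-t+1+j)^{-2}>0$; this is equivalent to your monotonicity claim.

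There is, however, a genuine gap in your treatment of the weight function. Your claimed imaginary-axis bound $\Re\tilde g(a,L;iy)\le -c(\tau)\,|y|\min(a,|y|)$ is \emph{false} for $|y|\gg a+L$. One has
\[
\Re\tilde g(a,L;iy)=-\tfrac{a}{2}\sum_{j=0}^{L-1}\log\bigl(1+y^2/(a+j)^2\bigr),
\]
and for $|y|\ge a+L$ each summand is at most $2\log(|y|/a)$, so the total is only $\bigO\bigl(aL\log|y|\bigr)$, not $\bigO(a|y|)$. The integrand in the weight therefore decays only like $(|y|/(a+L))^{-ML}$, polynomially rather than exponentially. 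Your assertion that $L/a$ is ``bounded above and below'' is also wrong: for $k$ close to $N$ the ratio $L/(N-k+1)$ is unbounded. The paper addresses this by splitting the tail integral into three explicit ranges $|y|<N-k+1$, $N-k+1\le|y|<L+N-k+1$, and $|y|\ge L+N-k+1$, obtaining in the last range only the power-law bound above, which still integrates to $\bigO((L+N)/(ML))\to 0$. This three-range analysis is the main technical novelty of the truncated case over the Ginibre case, and your proposal does not anticipate it; once you replace your incorrect linear bound by this argument, the rest of your sketch goes through.
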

  \begin{proof}

    {\bf Step 1: Asymptotics for the finite sum.}

    Recalling the choice of  $z_{1}$ and  $z_{2}$ in this setting,   change the  summation index $j$  to $N-k-j$  and rewrite the finite sum $T_{M,N}$ in \eqref{truncsum}  as
    \begin{align}
      T_{M,N}
      = (z_{1}\bar{z}_{2})^{N - k} \sum_{j=1-k}^{N-k} e^{- \frac{M}{N-k+1} f_{M,N}(j)} e^{i j ( \phi_{1}-\phi_{2})}, \label{trunc-1-sum}
    \end{align}
    where  $f_{M,N}(t) = f_{N}(t) + t h_{M,N}$ with
    \begin{equation}\label{trunc-1-fN}
      \begin{split}
        f_{N}(t) &= (N - k + 1) \big(\log\Gamma(N - k - t + 1) + t\psi(N - k + 1)\big) \\
        &\quad- (N - k + 1) \big(\log\Gamma(L + N - k - t + 1) + t\psi(L + N - k + 1)\big),
      \end{split}
    \end{equation}
    and
    \begin{align}
      h_{M,N}= \frac{N-k+1}{\rho \sqrt{MN}}(v_1+v_2).
    \end{align}
    With the notation in Lemma \ref{lem-g}, we have
    \begin{align}
      f_{N}(t) - f_{N}(0)
      &= g(N - k + 1;t) - \frac{N-k+1}{L+N-k+1} g(L+N-k+1;t).
    \end{align}
Below  we will prove that the term of $j=0$ in the summation of \eqref{trunc-1-sum}  gives a dominant   contribution and the total contribution of the other terms decays to zero as $M+N\to \infty$, according to  the following  two cases.

    { \bf Case 1}:  $N - k \to \infty$ as $M+N\to \infty$. In this case, statement \ref{lem-g3} in Lemma \ref{lem-g} implies that for sufficiently large $N-k$
    \begin{equation}
      f_{N}(t) - f_{N}(0) = \frac{t^{2}L}{2(L + N - k + 1)} + \bigO((N-k)^{-\frac{1}{4}})
    \end{equation}
    whenever $\abs{t} \leq (N - k)^{\frac{1}{4}}$.  Noting that $|h_{M,N}|\leq  \sqrt{(N-k+1)/M}$ for large $N-k$, we obtain  for $1\leq \abs{t} \leq (N - k)^{\frac{1}{4}}$ that
    \begin{equation}\label{trunc-1-lbound}
      f_{M,N}(t) - f_{M,N}(0)
       \geq \frac{1}{4}t^2-\sqrt{\frac{N-k+1}{M}}|(v_{1}+v_2)t|.
    \end{equation}
    Moreover,  the $t$-function on the right-hand side is increasing over $[1/2,\infty)$ whenever  $\sqrt{(N-k+1)/M} |(v_{1}+v_2)|\leq 1/8$. By introducing
    \begin{equation}
      J_{in}=[1 - k, N-k]\cap [-\lfloor(N - k)^{\frac{1}{4}}\rfloor, \lfloor(N - k)^{\frac{1}{4}} \rfloor],
    \end{equation}
    \begin{equation}
      J_{out}=[1 - k, N-k]\!\setminus\! [-\lfloor (N - k)^{\frac{1}{4}} \rfloor,  \lfloor(N - k)^{\frac{1}{4}} \rfloor],
    \end{equation}
    noting that $v_1, v_2$ are in a compact of $\mathds{R}$, we thus get
      \begin{align}  \label{trunc-1-sum-main}
    \begin{split}
      &
     \Big| \sum_{0\neq j\in J_{in}}e^{-\frac{M}{N-k+1} f_{M,N}(j)} e^{i j ( \phi_{1}-\phi_{2})}\Big| 
     \\
      &\leq 4e^{-\frac{M}{N-k+1} f_{M,N}(0)}  \int_{1/2}^{\infty} dt\,
        e^{-\frac{M}{N-k+1}\big(\frac{1}{4}t^2-\sqrt{\frac{N-k+1}{M}}|(v_{1}+v_2)t|\big) } 
        \\
      &= e^{-\frac{M}{N-k+1} f_{M,N}(0)}  \bigO\big(\sqrt{(N-k+1)/M} \big),
      \end{split}
    \end{align}
     where in the last estimate  use has been made of the change of variables $t \to t\sqrt{(N-k+1)/M}$.


    Noticing the monotonicity  of  $\psi'(x)$ with $x>0$ (cf. \cite[5.15.1]{Olver10}), for all   $t\in [1 - k, N - k]$ we see from
    \begin{align}
      f_{M,N}''(t) &= f_{N}''(t) = (N - k + 1)\big(\psi'(N - k - t + 1) - \psi'(L + N - k - t + 1)\big) \nonumber\\
      &=(N - k + 1)\sum_{j=1}^{L-1}\frac{1}{(N - k - t + 1+j)^2 }> 0 \label{1Lsum}
    \end{align}
  that both $f_{M,N}(t)$ and $f_{N}(t)$ are strictly convex functions.  Thus,
    $f_{M,N}(t)$ attains its minimum over $J_{out}$ at $ t=\lfloor (N - k)^{\frac{1}{4}} \rfloor$ or $t= \lfloor -(N - k)^{\frac{1}{4}} \rfloor$ (when  $k-1\leq \lfloor (N - k)^{\frac{1}{4}} \rfloor$, only the former is possible).
   On  the other hand, it is easy to    see from \eqref{trunc-1-lbound} that
    \begin{equation}
      f_{M,N}(\pm \lfloor (N - k)^{\frac{1}{4}} \rfloor) - f_{M,N}(0)
      \geq \frac{1}{8} \sqrt{N - k+1}
    \end{equation}
    when $M/(N-k+1)$ is sufficiently large. This immediately  implies
    \begin{equation}
      \inf_{t \in  J_{out}} \Re\{f_{M,N}(t) - f_{M,N}(0)\}  \geq  \frac{1}{8} \sqrt{N - k+1},
    \end{equation}
    from which we arrive at
    \begin{equation}\label{trunc-1-sum-out}
      \left| \sum_{ j\in J_{out}}e^{-\frac{M}{N-k+1} f_{M,N}(j)} e^{i j ( \phi_{1}-\phi_{2})}\right|
      \leq N e^{-\frac{M}{N-k+1} f_{M,N}(0) - \frac{M}{ 8\sqrt{N}}}.
    \end{equation}
    Combination of \eqref{trunc-1-sum-main} and \eqref{trunc-1-sum-out} shows that
    \begin{equation}\label{trunc-1-sum1}
      T_{M,N}
      = (z_{1} \bar{z}_{2})^{N - k} e^{-\frac{M}{N-k+1} f_{M,N}(0)}
      \Big(1 + \bigO\Big(\sqrt{\frac{N-k+1}{M}}\Big) + \bigO\big(N e^{- \frac{M}{ 8\sqrt{N} } }\big)\Big).
    \end{equation}

    {\bf Case 2}:   $N - k$ is a fixed number  as $M+N\to \infty$.   In such case $f_{N}(t)$ with $t\in [1-k,N-k]$ attains its unique minimum at $t_0=0$. Moreover,  the convexity of $f_{M,N}(t)$ and $f_{N}(t)$ shows that both  attain the minimum over $\{t \in [1-k,N-k]: |t|\geq 1\}$ at $ t=-1$ or $t= 1$ (when  $N-k=0$, only the former is possible). So we have
    \begin{equation}
      \inf_{|t|\geq 1, t \in [1-k,N-k]} \Re\{f_{N}(t) - f_{N}(0)\}=:\delta_{min}>0,
    \end{equation}
    and  according to the choice of $\rho$ for large $M$
    \begin{equation}
      \inf_{|t|\geq 1, t \in [1-k,N-k]} \Re\{f_{M,N}(t) - f_{M,N}(0)\}\geq \frac{1}{2}\delta_{min}.
    \end{equation}
    \begin{equation}
      \left| \sum_{0\neq j\in[1-k,N-k]}e^{- \frac{M}{N-k+1} f_{M,N}(j)} e^{i j ( \phi_{1}-\phi_{2})}\right|
      \leq e^{- \frac{M}{N-k+1} f_{M,N}(0)} N e^{- \frac{M\delta_{min}}{2(N-k+1)}}.
    \end{equation}
    Therefore, we obtain a similar  estimate as in Case 1
     \begin{equation}\label{trunc-1-sum2}
      T_{M,N} = (z_{1} \bar{z}_{2})^{N - k} e^{-\frac{M}{N-k+1} f_{M,N}(0)}\big (1 + \bigO\big(N e^{- \frac{M\delta_{min}}{2(N-k+1)}})\big).
    \end{equation}

  {\bf Step 2: Asymptotics for the weight function.}

After  a change of variables $s\to s-(N-k+1)$ in   \eqref{truncweight}, the weight function can be rewritten   as
    \begin{align}
      w(z_{1})= \abs{z_{1}}^{-2(N - k + 1)} \int_{- i\infty}^{i\infty} \frac{ds}{2 \pi i} e^{\frac{M}{N-k+1} f_{N}(s) + 2 s \sqrt{\frac{M}{N}} \frac{v_1}{\rho}},
    \end{align}
    where $f_{N}(s)$ is defined in \eqref{trunc-1-fN}.

  By   \eqref{1Lsum} we have  \begin{align}f_{N}''(0)&=(N-k+1) \big(\psi'(N - k + 1) - \psi'(L + N - k + 1)\big)\nonumber\\
  &\geq L/(N-k+1+L)\geq L/(L+N),  \end{align}
 which implies    $f_{N}''(0) M/(N-k+1) \to \infty$  as $M+N\to \infty$ in the setting. Given some small $\delta>0$,
     taking a Taylor expansion at zero, we can  use the standard  steepest descent argument to obtain
     \begin{align}
     \int_{- i\delta}^{i\delta} \frac{ds}{2 \pi i} e^{\frac{M}{N-k+1} (f_{N}(s)-f_{N}(0)) + 2 s \sqrt{\frac{M}{N}} \frac{v_1}{\rho}}
     =\frac{\rho}{2} \sqrt{\frac{N}{M}}
     \Big( \frac{1}{\sqrt{2\pi}}e^{\frac{1}{2} v_{1}^2}
     +  \bigO\Big(\rho \sqrt{\frac{N}{M}}\Big)\Big). \label{trunc-1-weight-2}
\end{align}

On the other hand, we claim the following fact:
 \begin{align}
      \Big |\Big(\int_{i \delta}^{i\infty}+ \int_{-i\infty}^{-i \delta} \Big)\frac{ds}{2 \pi i } e^{\frac{M}{N-k+1} (f_{N}(s)-f_{N}(0)) + 2s \sqrt{\frac{M}{N}} \frac{v_1}{\rho}}  \Big|
      \leq \frac{3(L + N )}{\pi(ML - 1)} \label{trunc-1-weight-1}
    \end{align} for large  $M$.
    If so,    combine  \eqref{trunc-1-weight-2} and  \eqref{trunc-1-weight-1} and  we obtain
    \begin{equation}\label{trunc-1-weight}
      w(z_{1})
      =\frac{\rho \abs{z_{1}}^{-2(N - k + 1)} \sqrt{N}}{2 \sqrt{2\pi M}} e^{\frac{M}{N-k+1} f_{N}(0)-\frac{v_{1}^2}{2}}  \Big(1 +  \bigO\Big(\rho \sqrt{\frac{N}{M}}\Big)\Big).
    \end{equation}

  Furthermore,  we  combine  \eqref{trunc-1-sum1}, \eqref{trunc-1-sum2}  and \eqref{trunc-1-weight} to get an  approximation of the   kernel, uniformly for $v_1, v_2$ in a compact subset of $\mathbb{R}$,        \begin{align}
       K_{M, N}(z_{1}, z_{2}) = \frac{1}{(2 \pi)^{\frac{3}{2}}} e^{- \frac{1}{4} (v_{1}^{2} + v_{2}^{2})}  \frac{(z_{1}\bar{z}_{2})^{N-k}}{\abs{z_{1}z_{2}}^{N-k+1}} \sqrt{\frac{N \rho^{2}}{M}} (1+o(1)),
    \end{align}
from which the theorem   immediately follows.

Finally, let's complete the estimate   \eqref{trunc-1-weight-1}.
    For $y\in \mathds{R}$,  we use the formula \cite[5.8.3]{Olver10} for the ratio of the gamma functions and   obtain
     \begin{align} \begin{split}
      \Re\{ f_N(iy)- f_N(0)\}
      &= -\frac{N-k+1}{2}\sum_{j=0}^{L - 1} \log\Big(1+ \frac{y^2}{(j + N - k + 1)^{2}}\Big)\\
      &\leq  -\frac{N-k+1}{2} \int_{0}^{L} \log\Big(1+ \frac{y^2}{(x + N - k + 1)^{2}}\Big) dx\\
      &= -\frac{(N-k+1) |y|}{2} \int_{\frac{N - k + 1}{|y|}}^{\frac{L + N - k + 1}{|y|}} \log\Big(1+ \frac{1}{x^{2}}\Big) dx. \label{trunc-1-upperbound}\end{split}
    \end{align}
   Next, we analyze  the last integral according to    the range of $|y|$.

    When $\abs{y} < N - k + 1$, using the inequality  $\log(1+x)\geq \frac{1}{2}x$ with $0\leq x<1$ we have   \begin{align}
      \Re\{ f_N(iy)- f_N(0)\}
      &\leq  - \frac{y^{2}}{2} \frac{L}{L + N - k + 1} \leq - \frac{y^{2}}{2} \frac{L}{L + N}.
    \end{align}
    Thus, we obtain
    \begin{align}
      &\abs{\Big(\int_{\delta i}^{(N - k + 1) i} + \int_{-(N - k + 1) i}^{-\delta i}\Big) \frac{ds}{2 \pi i} e^{\frac{M}{N-k+1} (f_{N}(s) - f_{N}(0)) + 2 s \sqrt{\frac{M}{N}} \frac{v_1}{\rho}}}\nonumber\\
      &\leq \frac{1}{\pi} \int_{\delta}^{N - k + 1} e^{- \frac{y^{2}}{2} \frac{M}{N - k + 1} \frac{\tau}{1 + \tau}}\leq \frac{1}{\pi \delta} e^{- \frac{\delta^{2}}{2} \frac{M}{N - k + 1} \frac{L}{L + N}}. \label{trunc-1-weight-11}
    \end{align}

When $N - k + 1 \leq \abs{y} < L + N - k + 1$, divide the integral into two parts and we get
  \begin{align} 
   \int_{\frac{N - k + 1}{|y|}}^{\frac{L + N - k + 1}{|y|}} \log\big(1+ \frac{1}{x^{2}}\big) dx &\geq
    \int_{\frac{N - k + 1}{|y|}}^{1} \log(1+ x^{2}) dx + \int_{1}^{\frac{L + N - k + 1}{|y|}} \log\big(1+ \frac{1}{x^{2}}\big) dx  \nonumber\\
     &\geq
   \frac{1}{2} \int_{\frac{N - k + 1}{|y|}}^{1} x^{2} dx +   \frac{1}{2} \int_{1}^{\frac{L + N - k + 1}{|y|}} \log  \frac{1}{x^{2}} dx
    \nonumber \\  &= \frac{1}{6}  \Big(1- \frac{(N - k + 1)^3}{|y|^3}\Big)+\frac{1}{2}  \Big(1- \frac{|y|}{L + N - k + 1}\Big)\nonumber\\
     &\geq \Big(1- \frac{N - k + 1}{|y|}+1- \frac{|y|}{L + N - k + 1}\Big)\nonumber\\
      &\geq  \frac{1}{6}  \frac{L}{L + N - k + 1}\geq   \frac{1}{6}  \frac{L}{L + N }
    \end{align}
Thus,  we  see from  \eqref{trunc-1-upperbound} that
    \begin{align}
     & \Re\{ f_N(iy)- f_N(0)\}
      \leq  -\frac{L(N-k+1)}{12(L+N)}  |y|,
    \end{align}
    and further have
    \begin{align}
      &\abs{\Big(\int_{(N - k + 1) i}^{(L + N - k + 1) i} + \int_{-(L + N - k + 1) i}^{-(N - k + 1) i}\Big) \frac{ds}{2 \pi i} e^{\frac{M}{N-k+1} (f_{N}(s) - f_{N}(0)) + 2 s \sqrt{\frac{M}{N}} \frac{v_1}{\rho}}}\nonumber\\
      &\leq  24\frac{(L+N)(N-k+1)} {LM}
       e^{-\frac{LM}{12 (L + N)}}.\label{trunc-1-weight-12}
    \end{align}

    When $\abs{y} \geq L + N - k + 1$, noting  that
    \begin{align}
      \Re\{ f_N(iy)- f_N(0)\}
      &< -L (N-k+1)\log \frac{|y|}{L + N - k + 1},
    \end{align}
   we have
    \begin{align}
      &\abs{\Big(\int_{(L + N - k + 1) i}^{i \infty} + \int_{- i \infty}^{-(L + N - k + 1) i}\Big) \frac{ds}{2 \pi i} e^{\frac{M}{N-k+1} (f_{N}(s) - f_{N}(0)) + 2 s \sqrt{\frac{M}{N}} \frac{v_1}{\rho}}} \nonumber\\
      &\leq \frac{L + N - k + 1}{\pi(LM - 1)}\leq \frac{L+N}{\pi (LM-1)}.\label{trunc-1-weight-13}
    \end{align}

    Combination of \eqref{trunc-1-weight-11}, \eqref{trunc-1-weight-12} and \eqref{trunc-1-weight-13} leads to  the estimate   \eqref{trunc-1-weight-1}.
  \end{proof}

  \begin{thm}\label{trunc-2}
    Assume that   $\displaystyle\lim_{M+N \to \infty} M/N=\gamma\in (0,\infty)$ and $\lim_{M+N \to \infty} L/N = \tau\in (0,\infty)$. Given   $q \in [0, 1)$, let $u_{N} = \abs{u_{N}} e^{i \theta}$ with $\theta \in (- \pi, \pi]$ such that $\abs{u_{N}}^{2} = 1 - \frac{1}{N}\lfloor qN \rfloor$, set  \begin{align}
      z_{j} = u_{N}^{M} \Big(1  + \frac{1}{\sqrt{\gamma MN}} \big(v_j-\frac{\gamma \tau}{4 (1 - q)(1 -q+ \tau)}\big)\Big)^{M}, \quad j=1,\ldots,n.
    \end{align}
    Then   limiting  $n$-correlation functions for eigenvalues of $\Pi_M$ in Model B
     \begin{multline}
      \lim_{M+N \to \infty}
        \abs{z_{1}\cdots z_{n}}^2 R^{(n)}_{M,N}(z_{1}, \ldots,z_{n}) \\
       =\begin{cases}
        \det\!\big [  \mathrm{K^{(bulk)}_{\mathrm{crit}}}(\frac{\gamma \tau}{(1 - q)(1 -q+ \tau)};v_i, v_j)\big]_{i,j=1}^n, & q \in (0, 1),\\
       \det\!\big [  \mathrm{K^{(edge)}_{\mathrm{crit}}}(\frac{\gamma \tau}{1 + \tau};v_i, v_j)\big]_{i,j=1}^n, & q = 0,
      \end{cases}
    \end{multline}
     hold   uniformly for $v_1, \ldots,v_n$ in a compact subset of $\mathds{C}$.
   \end{thm}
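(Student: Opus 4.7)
The strategy mimics the two-step analysis of Theorem \ref{ginibre-2}, the new feature being that both the weight \eqref{truncweight} and the finite sum \eqref{truncsum} now involve gamma ratios at the two scales $N-\lfloor qN\rfloor$ and $L+N-\lfloor qN\rfloor$. Consequently Lemma \ref{lem-g} must be applied at both parameters, and after subtraction the quadratic exponent produced is
\begin{equation*}
-\frac{Mt^{2}}{2}\Big(\frac{1}{N-\lfloor qN\rfloor}-\frac{1}{L+N-\lfloor qN\rfloor}\Big)=-\frac{MLt^{2}}{2(N-\lfloor qN\rfloor)(L+N-\lfloor qN\rfloor)}\longrightarrow -\tfrac{1}{2}\beta t^{2},
\end{equation*}
with $\beta=\gamma\tau/((1-q)(1-q+\tau))$ in the bulk and $\beta=\gamma\tau/(1+\tau)$ at the edge, which is exactly the parameter entering \eqref{critbulk}--\eqref{critiedge}.

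\emph{Step 1 (finite sum).} Apply Cauchy's residue theorem to rewrite $T_{M,N}$ as a contour integral over the rectangular contour $\Sigma$ of \eqref{crit-sum-contours-bulk}/\eqref{crit-sum-contours-edge}, with integrand
$\frac{\Gamma^{M}(L+N-\lfloor qN\rfloor-t)}{\Gamma^{M}(N-\lfloor qN\rfloor-t)}(z_{1}\bar z_{2})^{-t}\frac{(-1)^{t}\pi}{\sin\pi t}$. For $|t|=O(N^{1/4})$, Lemma \ref{lem-g}(3) applied at both scales together with the digamma expansion $\psi(a)=\log a-\frac{1}{2a}+O(a^{-2})$ produces the quadratic coefficient above plus a linear term $-t(v_{1}+\bar v_{2})+o(1)$; the offset $-\beta/4$ built into the definition of $z_{j}$ is designed precisely to absorb the sub-leading $-M/(2a)$ corrections coming from the two digammas. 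Collapsing the central pieces $\Sigma_{1}^{\pm}$ (bulk) or $\Sigma_{12}^{\pm}$ (edge) onto horizontal lines and applying residues yields the theta sum over $\mathbb{Z}$ or $\mathbb{Z}_{\geq 0}$, exactly as in Step 1 of Theorem \ref{ginibre-2}. Bounds on the remaining pieces follow from Lemma \ref{lem-g}(1) and (3) together with the strict convexity identity \eqref{1Lsum} of the proof of Theorem \ref{trunc-1}, which guarantees that the combined exponent on $\Sigma_{2}^{\pm}\cup\Sigma_{3}\cup\Sigma_{4}\cup\Sigma_{5}^{\pm}$ is bounded below by a constant multiple of $\sqrt{N}$ (using $L/N\to\tau>0$), so these pieces contribute at most $O(e^{-c\sqrt{N}})$.

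\emph{Step 2 (weight function).} After the substitution $s\to -s-(N-\lfloor qN\rfloor)$, $w(z_{1})$ becomes a Mellin--Barnes integral with integrand proportional to $\frac{\Gamma^{M}(N-\lfloor qN\rfloor-s)}{\Gamma^{M}(L+N-\lfloor qN\rfloor-s)}|z_{1}|^{2s}$. The bound \eqref{trunc-1-upperbound} already obtained in the proof of Theorem \ref{trunc-1} shows that, on the imaginary axis $s=iy$, the real part of its logarithm is at most $-c|y|\min(1,|y|/N)$ for some $c>0$ depending on $\tau$; hence the tail $|s|\geq N^{1/4}$ is negligible. A Taylor expansion near $s=0$ yields a Gaussian of variance $1/\beta$, and the standard saddle-point argument delivers the prefactor $\frac{1}{\sqrt{2\pi\beta}}e^{-\frac{1}{2\beta}(v_{1}+\bar v_{1})^{2}}$, up to the $|z_{1}|^{-2(N-\lfloor qN\rfloor)}$ factor and a gamma normalisation that is absorbed by the corresponding prefactors in Step 1.

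\emph{Step 3 and main obstacle.} Multiplying the leading terms from Steps 1 and 2 and forming the $n\times n$ determinant, the non-universal phases $(z_{1}\bar z_{2})^{N-\lfloor qN\rfloor-1}/|z_{1}z_{2}|^{N-\lfloor qN\rfloor}$ cancel out, and the Gaussian together with the theta sum recombine into $\mathrm{K}^{(\mathrm{bulk})}_{\mathrm{crit}}(\beta;v_{i},v_{j})$ or $\mathrm{K}^{(\mathrm{edge})}_{\mathrm{crit}}(\beta;v_{i},v_{j})$ exactly as in Theorem \ref{ginibre-2}. The main technical obstacle is the coordinated bookkeeping of the two simultaneous digamma expansions: one must verify that the sub-leading $-1/(2a)$ corrections at the two scales combine to give precisely the offset $\beta/4$ built into the rescaled variable, and that the convexity bound \eqref{1Lsum} (used in Theorem \ref{trunc-1} under the assumption $M/N\to\infty$) still provides the required exponential decay on the outer portions of $\Sigma$ under the critical scaling $M/N\to\gamma\in(0,\infty)$, where the two gamma factors now contribute at comparable orders and no longer manifestly dominate one another.
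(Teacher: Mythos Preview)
Your proposal is correct and follows essentially the same route as the paper's own proof: the two-step analysis via the contour-integral representation of $T_{M,N}$ over the rectangular $\Sigma$ of \eqref{crit-sum-contours-bulk}/\eqref{crit-sum-contours-edge}, the double application of Lemma~\ref{lem-g}\ref{lem-g3} at the scales $N-\lfloor qN\rfloor$ and $L+N-\lfloor qN\rfloor$ to extract the quadratic $\beta$, and the reuse of the tail bound \eqref{trunc-1-upperbound} for the weight are exactly what the paper does. One small refinement: for the outer pieces of $\Sigma$ the paper does not invoke Lemma~\ref{lem-g}\ref{lem-g1} (which governs a single $g$ and does not directly control the difference), but instead computes $\partial_x^{2}\Re f_{M,N}(x+iy)>0$ for $|y|\le 1/8$ via the series $\sum_{k=0}^{L-1}\frac{(k+N-\lfloor qN\rfloor-x)^{2}-y^{2}}{((k+N-\lfloor qN\rfloor-x)^{2}+y^{2})^{2}}$, the complex-argument analogue of \eqref{1Lsum}; this is the precise form of the convexity input you flagged as the main obstacle.
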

  \begin{proof}
    {\bf Step 1: Asymptotics for the finite sum.}

Let
 \begin{equation}
        f_{M,N}(t) = M\big(\log\Gamma(N - \lfloor qN \rfloor -t)- \log\Gamma(L + N - \lfloor qN \rfloor -t)\big) + t \log (z_{1}\bar{z}_{2}),   \label{sect-3-f}  \end{equation}
by Cauchy's  residue theorem we rewrite the finite sum  $T_{M,N}$ as
    \begin{align}
      T_{M,N}       &= (z_{1} \bar{z}_{2})^{N- \lfloor qN \rfloor - 1} \oint_{\Sigma}      \frac{dt}{2 \pi i}
       \frac{(-1)^{t} \pi}{ \sin \pi t}e^{-f_{M,N}(t)},
       \label{trunc-2-sum-int}
    \end{align}
    where $\Sigma$ is an anticlockwise contour encircling $ - \lfloor qN \rfloor, 1- \lfloor qN \rfloor \ldots, N-1- \lfloor qN \rfloor$ and is given in  \eqref{crit-sum-contours-bulk}.

    For convenience, let
    \begin{equation}
      \beta=\frac{\gamma \tau}{(1 - q)(1 -q+ \tau)}.
    \end{equation}
    With the choice of $z_1,z_2$, use statement \ref{lem-g3} of Lemma \ref{lem-g} we obtain  for  $  \abs{t} =\bigO( N^{\frac{1}{4}})$
    \begin{align}
      &f_{M,N}(t)-f_{M,N}(0)  =\frac{M}{N - \lfloor qN \rfloor} g(N - \lfloor qN \rfloor, t) - \frac{M}{L + N - \lfloor qN \rfloor} g(L + N - \lfloor qN \rfloor, t) \nonumber\\
      &+t \big(\log(z_{1}\bar{z}_{2}) - M\psi(N - \lfloor qN \rfloor) + M\psi(L + N - \lfloor qN \rfloor)\big)\nonumber\\
      &=\frac{1}{2} (\beta + o(1)) t^{2} +(v_1 + \bar{v}_2 + o(1)) t + \bigO(N^{-\frac{1}{4}}).\label{trucn-2-sum-1}
    \end{align}
    So as $M+N\to \infty$  we have from \eqref{trucn-2-sum-1} that  for $q \in (0, 1)$
    \begin{align}
     \int_{\Sigma_{1}^{+} \cup \Sigma_{1}^{-} }    \frac{dt}{2 \pi i}   \frac{(-1)^{t} \pi}{ \sin \pi t}e^{-f_{M,N}(t)+f_{M,N}(0)}
     & \to \int_{\mathds{R}\pm \frac{i}{8}} \frac{dt}{2 \pi i} \frac{(-1)^{t}\pi}{\sin \pi t} e^{- \frac{1}{2} \beta t^{2} - t (v_1 + \bar{v}_2)}\nonumber\\
      &= \sum_{j = - \infty}^{\infty} e^{- \frac{1}{2} \beta  j^{2} - j(v_1+\bar{v}_2) },
    \end{align} uniformly for $v_1,v_2$ in a compact set of $\mathds{C}$.
 Similarly,  for $q = 0$  we have
    \begin{align}
     &  \int_{\Sigma_{12}^{+} \cup \Sigma_3 \cup \Sigma_{12}^{-} } \frac{dt}{2 \pi i}   \frac{(-1)^{t} \pi}{ \sin \pi t}e^{-f_{M,N}(t)+f_{M,N}(0)} \to \nonumber\\
      & \int_{\Sigma_{3}\cup \{x\pm \frac{i}{8}: x \in [-\frac{1}{2},\infty)\}} \frac{dt}{2 \pi i} \frac{(-1)^{t}\pi}{\sin \pi t} e^{- \frac{1}{2} \beta t^{2} - t (v_1 + \bar{v}_2)}= \sum_{j = 0}^{\infty} e^{- \frac{1}{2} \beta  j^{2} - j(v_1+\bar{v}_2)},
    \end{align}
     uniformly for $v_1,v_2$ in a compact set of $\mathds{C}$.

   The remaining task is to prove that  the remainder of the integral in  \eqref{trunc-2-sum-int} is negligible.  For this, let $t=x+iy$,
    then   for all $ - \lfloor qN \rfloor - \frac{1}{2}\leq x\leq  N - \lfloor qN \rfloor - \frac{1}{2}$ and $|y|\leq \frac{1}{8}$ we get from \cite[5.15.1]{Olver10} that
    \begin{align}
      &\frac{\partial^{2}}{\partial x^{2}} \Re{f_{M,N}(x + iy)}= \Re{f_{M,N}''(x+iy)}\nonumber\\
      &= M\big(\Re{\psi'(N - \lfloor qN \rfloor -(x + iy)) - \Re{\psi'(L + N - \lfloor qN \rfloor -(x + iy))}}\big)\nonumber\\
       &= M \sum_{k=0}^{L-1} \frac{(k + N - \lfloor qN \rfloor -x)^{2} - y^{2}}{((k + N - \lfloor qN \rfloor -x)^{2} + y^{2})^{2}}>0.
    \end{align}
 Combine  asymptotics for the  digamma function $\psi$ and  we obtain for $q\in [0,1)$
    \begin{align}
      &\min_{\lfloor N^{\frac{1}{4}} \rfloor - \frac{1}{2} \leq  x \leq N - \lfloor qN \rfloor - \frac{1}{2}} \frac{\partial}{\partial x} \Re{f_{M,N} (x + iy)}\nonumber\\
      &= - M\Big(\log\abs{1 - \frac{\lfloor N^{1/4} \rfloor - \frac{1}{2} + iy}{N - \lfloor qN \rfloor}} - \log\abs{1 - \frac{\lfloor N^{1/4} \rfloor - \frac{1}{2} + iy}{L + N - \lfloor qN \rfloor}}\Big) + \bigO(1)\nonumber\\
      &> 0
    \end{align}
    and for $q\in (0,1)$
    \begin{align}
      &\max_{- \lfloor qN \rfloor - \frac{1}{2} \leq  x \leq - \lfloor N^{1/4} \rfloor + \frac{1}{2}} \frac{\partial}{\partial x} \Re{f_{M,N} (x + iy)}\nonumber\\
      &=- M\Big(\log\abs{1 + \frac{\lfloor N^{1/4} \rfloor - \frac{1}{2} - iy}{N - \lfloor qN \rfloor}} - \log\abs{1 + \frac{\lfloor N^{1/4} \rfloor - \frac{1}{2} - iy}{L + N - \lfloor qN \rfloor}}\Big) + \bigO(1)\nonumber\\
      &< 0
    \end{align}
    when $M,N$ are   sufficiently large.  By   the above monotonicity and   the asymptotic expansion  \eqref{trucn-2-sum-1},  uniformly for $|y|\leq \frac{1}{8}$ we see that  for $q\in [0,1)$
    \begin{align}
     & \min_{\lfloor N^{\frac{1}{4}} \rfloor - \frac{1}{2} \leq  x \leq  N - \lfloor qN \rfloor - \frac{1}{2}} \Re\{f_{M,N} (x + iy)-f_{M,N}(0)\}\nonumber\\
      &= \Re\{f_{M,N}\big(\lfloor N^{\frac{1}{4}} \rfloor - \frac{1}{2} + iy\big)-f_{M,N}(0)\}= \frac{1}{2}\beta \sqrt{N} (1 + o(1)), \label{trunc-2-est-1}
    \end{align}
    and for $q\in (0,1)$
    \begin{align}
     & \min_{- \lfloor qN \rfloor - \frac{1}{2} \leq x \leq  - \lfloor N^{\frac{1}{4}} \rfloor + \frac{1}{2}} \Re\{f_{M,N} (x + iy)-f_{M,N}(0)\}\nonumber\\
      &= \Re\{f_{M,N}\big(- \lfloor N^{\frac{1}{4}} \rfloor + \frac{1}{2} + iy\big)-f_{M,N}(0)\}
      =  \frac{1}{2}\beta \sqrt{N} (1 + o(1)). \label{trunc-2-est-2}
    \end{align}

On the other hand,   since \eqref{trunc-2-est-1} and  \eqref{trunc-2-est-2} hold true uniformly for $|y|\leq \frac{1}{8}$, we see  when $t\in \Sigma_3\cup \Sigma_5$ that for $q\in [0,1)$
    \begin{align}
      \min_{y \in [-\frac{1}{8}, \frac{1}{8}]} \Re{f_{M,N}(N - \lfloor qN \rfloor - \frac{1}{2} + iy)} = f_{M,N}(0) +\frac{1}{2}\beta  \sqrt{N} (1 + o(1)) \label{trunc-2-est-3}
    \end{align}
    and  for $q\in (0,1)$
     \begin{align}
      \min_{y \in [-\frac{1}{8}, \frac{1}{8}]} \Re{f_{M,N}(- \lfloor qN \rfloor -\frac{1}{2} + iy)} = f_{M,N}(0) +\frac{1}{2}\beta  \sqrt{N} (1 + o(1)). \label{trunc-2-est-4}
    \end{align}

     Note that $|(-1)^t /\sin\pi t|$ has an upper bound independent of $M,N$   whenever $t$ belongs to the chosen contour,   combine \eqref{trunc-2-est-1}-\eqref{trunc-2-est-4} and we know  $q\in [0,1)$
    \begin{align}
     \Big|  \int_{
     \Sigma_{2}^{+}\cup  \Sigma_{3}\cup  \Sigma_{2}^{-}\cup  \Sigma_{5}^{-}\cup  \Sigma_4 \cup \Sigma_{5}^{+}
      } \frac{dt}{2 \pi i}  \frac{(-1)^{t} \pi}{ \sin \pi t}e^{-f_{M,N}(t)+f_{M,N}(0)}\Big| \leq e^{- \frac{1}{4} \beta \sqrt{N}}
    \end{align}
    and  for $q\in (0,1)$
    \begin{align}
     \Big|  \int_{
       \Sigma_{5}^{-}\cup  \Sigma_4 \cup \Sigma_{5}^{+}
      } \frac{dt}{2 \pi i}  \frac{(-1)^{t} \pi}{ \sin \pi t}e^{-f_{M,N}(t)+f_{M,N}(0)}\Big| \leq e^{- \frac{1}{4} \beta \sqrt{N}}
    \end{align}
 when $M, N$ are large.

 In short,  as $M+N\to \infty$  we have  uniformly for $v_1,v_2$ in a compact set of $\mathds{C}$
    \begin{align}\label{trunc-2-asy-sum}
      &e^{-f_{M,N}(0)} (z_{1} \bar{z}_{2})^{\lfloor qN \rfloor-N+1} T_{M,N}\nonumber \\
      &\longrightarrow  \begin{cases}
        \sum_{j = - \infty}^{\infty} e^{- \frac{1}{2} \beta  j^{2} - j(v_1+\bar{v}_2) }, & q \in (0, 1),\\
     \sum_{j = 0}^{\infty} e^{- \frac{1}{2} \beta  j^{2} - j(v_1+\bar{v}_2) }, & q = 0.
      \end{cases}
    \end{align}

 {\bf Step 2: Asymptotics for the weight function.}

In this case,  recalling $f_{M,N}(t)$ in \eqref{sect-3-f} where it is assumed that  $z_1=z_2$, changing  $s\to s- (N - \lfloor qN \rfloor)$
  in the weight function $w(z_{1})$ gives
   \begin{align}
      w(z_{1})= \abs{z_{1}}^{-2(N - \lfloor qN \rfloor)} \int_{- i\infty}^{i\infty} \frac{ds}{2 \pi i}  e^{f_{M,N}(s)}
    \end{align}
    Let $s=iy$, we proceed as in the same way to obtain  \eqref{trunc-1-weight-1}  and  have the following upper bounds
    \begin{align}
      \Big| \Big(\int_{- i \infty}^{-i N^{\frac{1}{4}}} + \int_{i N^{\frac{1}{4}}}^{i \infty}\Big) \frac{ds}{2 \pi i}
      e^{f_{M,N}(s)-f_{M,N}(0)}\Big|=\bigO(\frac{1}{M}). \label{trunc-2-weight-1}
    \end{align}
    When $|s|\leq N^{\frac{1}{4}}$, we can proceed as in \eqref{trucn-2-sum-1}, take a Taylor expansion and get
    \begin{align}
      &   \int_{- i N^{\frac{1}{4}}}^{i N^{\frac{1}{4}}} \frac{ds}{2 \pi i}
      e^{f_{M,N}(s)-f_{M,N}(0)}\rightarrow   
      \frac{1}{\sqrt{2\pi \beta}} e^{-\frac{1}{2\beta}(v_1+\bar{v}_1)^2}.\label{trunc-2-weight-2}
    \end{align}
    Together with \eqref{trunc-2-weight-1}, we have
      \begin{align}
      &e^{-f_{M,N}(0)} \abs{z_{1}}^{2(N - \lfloor qN \rfloor)}  w(z_1)
     \longrightarrow   \frac{1}{\sqrt{2\pi \beta}} e^{-\frac{1}{2\beta}(v_1+\bar{v}_1)^2}.\label{trunc-2-weight-3}
    \end{align}

  Combining \eqref{trunc-2-asy-sum} and \eqref{trunc-2-weight-3}, we get an  approximation of the correlation  kernel, uniformly for $v_1, v_2$ in a compact subset of $\mathbb{C}$,        \begin{align}
       K_{M, N}(z_{1}, z_{2}) = \frac{1}{\sqrt{2\pi^3 \beta}}    \frac{(z_{1}\bar{z}_{2})^{N-\lfloor qN \rfloor-1}}{\abs{z_{1}z_{2}}^{N-\lfloor qN \rfloor}}  (1+o(1))\nonumber\\
       \times  \begin{cases}
         \mathrm{K^{(bulk)}_{\mathrm{crit}}}(\beta;v_1,v_2), & q \in (0, 1),\\
     \mathrm{K^{(edge)}_{\mathrm{crit}}}(\beta;v_1,v_2), & q = 0,
      \end{cases}
    \end{align}
from which the desired result   immediately follows.
  \end{proof}

In order to investigate the scaling limits  in the case that $M/N\to $ as $M+N\to 0$, we need to  introduce a spectral parameter  $u_N$. Given  $q \in (0, 1]$, choose $u_{N} = \abs{u_{N}} e^{i \theta}, \theta \in (-\pi, \pi]$ such that
  \begin{align}
    \Big(1 + \frac{L}{qN}\Big) \abs{u_{N}}^{2} - 1 =0.
  \end{align}
  \begin{thm}\label{trunc-3}
    Assume that $\displaystyle\lim_{M+N \to \infty} M/N=0$ and $\lim_{M+N \to \infty} L/N = \tau\in (0,\infty)$. Let $\rho = \sqrt{\tau}/(1 - \abs{u_N}^{2})$ 
    and introduce
    \begin{align}
      z_{j} = u_{N}^{M} \Big(1  + \frac{v_j}{\rho u_{N}\sqrt{ MN}}\Big)^{M}, \quad j=1,\ldots,n.
    \end{align}
 Then   limiting  $n$-correlation functions for eigenvalues of $\Pi_M$ in Model B
     \begin{multline}
      \lim_{M+N \to \infty}
        \Big(\frac{M}{\rho^{2}|u_N|^2 N}\Big)^{n} \abs{z_{1}\cdots z_{n}}^2   R^{(n)}_{M,N}(z_{1}, \ldots,z_{n}) \\
       =\begin{cases}
        \det\!\big [  \mathrm{K^{(bulk)}_{\mathrm{Gin}}}(v_i, v_j)\big]_{i,j=1}^n, &  q\in (0,1),\\
       \det\!\big [  \mathrm{K^{(edge)}_{\mathrm{Gin}}}(v_i, v_j)\big]_{i,j=1}^n, & q=1,
      \end{cases}
    \end{multline}
     hold   uniformly for $v_1, \ldots,v_n$ in a compact subset of $\mathds{C}$.
  \end{thm}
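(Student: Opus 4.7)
The approach parallels that of Theorem~\ref{ginibre-3} for Model~A: handle the finite sum $T_{M,N}$ via Lemma~\ref{lem-eulmac} (Euler--Maclaurin plus saddle point), treat the weight $w(z_1)$ by a contour-shift/steepest-descent argument using the tail bounds developed in the proof of Theorem~\ref{trunc-1}, and combine the two to identify the limiting kernel. Specifically, write $T_{M,N} = \sum_{j=0}^{N-1} e^{-f_{M,N}(j)}$ with
\[
f_{M,N}(t) = M\log\Gamma(t+1) - M\log\Gamma(L+t+1) - t\log(z_1\bar z_2).
\]
Stirling's formula at $t=sN$ produces the decomposition required by Lemma~\ref{lem-eulmac},
\[
f_{M,N}(sN) = MN\, f(s) + \sqrt{MN}\,\xi\, s + Q_{M,N}(\xi, s),
\]
with $f(s) = s\log s - (\tau_N+s)\log(\tau_N+s) - 2s\log|u_N|$, $\tau_N = L/N$, and $\xi = -[v_1/(\rho u_N)+\bar v_2/(\rho\bar u_N)]$. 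Solving $f'(s_0)=0$ yields $s_0 = \tau_N|u_N|^2/(1-|u_N|^2) = q$ (using the defining identity for $u_N$), and a direct computation gives $f''(s_0) = \tau_N/(q(q+\tau_N)) = 1/(\rho^2|u_N|^2)$. Hence $s_0\in(0,1)$ when $q\in(0,1)$ (bulk, interior saddle) and $s_0=1$ when $q=1$ (edge, boundary saddle), reproducing the dichotomy in the theorem. Hypotheses (A1)--(A3) of Lemma~\ref{lem-eulmac} follow as in Theorem~\ref{ginibre-3}: for (A1), $M(\psi(t+1)-\psi(L+t+1)-2\log|u_N|)<0$ on $[0,\delta N]$ for any fixed $\delta<q$ and $N$ large; for (A2), the Stirling error is $O(M\log N)=o(\sqrt{MN})$; and (A3) is the convexity of $f$ on $(0,1]$.

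Applying Lemma~\ref{lem-eulmac} produces an asymptotic for $T_{M,N}$ whose non-trivial dependence on $(v_1,\bar v_2)$ is $\exp\bigl(\tfrac{1}{2}\rho^2|u_N|^2\xi^2\bigr) = \exp\bigl(\tfrac{1}{2}(v_1e^{-i\theta}+\bar v_2 e^{i\theta})^2\bigr)$, multiplied by $1$ in the bulk or by $\tfrac{1}{2}\mathrm{erfc}\bigl((v_1e^{-i\theta}+\bar v_2 e^{i\theta})/\sqrt{2}\bigr)$ at the edge, together with the prefactor $\sqrt{2\pi N\rho^2|u_N|^2/M}$. For the weight, the substitution $s\mapsto -sN$ in \eqref{truncweight} reaches the same saddle $s_0=q$; the off-saddle tails of the vertical contour are controlled by the $M$-independent inequalities on $\Re\{f_N(iy)-f_N(0)\}$ derived in \eqref{trunc-1-upperbound}--\eqref{trunc-1-weight-13}, which remain valid under $M/N\to 0$ since only the pointwise exponential decay enters. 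The local Gaussian integration near $s_0$ produces $\sqrt{\rho^2|u_N|^2/(2\pi MN)}\,\exp\bigl(-\tfrac{1}{2}(v_1 e^{-i\theta}+\bar v_1 e^{i\theta})^2\bigr)$ for $w(z_1)$ (and analogously for $z_2$), up to Gamma-function and $|z_1|^{-2s_0N}$ factors that exactly match the $e^{-f_{M,N}(s_0 N)}$ piece from the sum. Inserting into \eqref{trunckernel} and using $|z_1 z_2|^2 R^{(n)} = |z_1 z_2|^2\det[K_{M,N}(z_i,z_j)]$, the Gamma-function, modulus and unimodular phase factors cancel as in Ginibre, leaving the per-entry scaling $M/(\rho^2|u_N|^2 N)$ and a Gaussian (resp.\ Gaussian $\times$ erfc) profile in $(v_1,v_2)$ which, by the same algebra as at the end of Theorem~\ref{ginibre-3}, equals $\mathrm{K}^{(\mathrm{bulk})}_{\mathrm{Gin}}$ (resp.\ $\mathrm{K}^{(\mathrm{edge})}_{\mathrm{Gin}}$).

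The main obstacle will be the edge case $q=1$, where the saddle $s_0=1$ sits at the right endpoint of the summation range. Here the $\mathrm{erfc}$ factor emerges from the boundary branch of Lemma~\ref{lem-eulmac}, and I have to confirm that the complex parameter $\xi$ (which encodes the phases $e^{\pm i\theta}$ coming from $u_N=|u_N|e^{i\theta}$) combines with $f''(s_0)$ to reproduce the precise $\mathrm{erfc}$ argument appearing in $\mathrm{K}^{(\mathrm{edge})}_{\mathrm{Gin}}$, rather than some rotated or reflected variant. A secondary, routine concern is that the tail bounds for $w$ were written in the proof of Theorem~\ref{trunc-1} under the opposite regime $M/N\to\infty$; however, those estimates rely only on pointwise control of $\Re\{f_N(iy)\}$ and thus carry over verbatim under $M/N\to 0$.
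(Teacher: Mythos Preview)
Your proposal is correct and follows essentially the same route as the paper: apply Lemma~\ref{lem-eulmac} to the finite sum with the Stirling-derived phase $f(s)=s\log s-(s+L/N)\log(s+L/N)-s\log|u_N|^2$ and saddle $s_0=q$, and use steepest descent on the weight about the same saddle, so that the bulk/edge dichotomy arises from $s_0\in(0,1)$ versus $s_0=1$. The only cosmetic difference is that the paper re-derives the off-saddle bound for $w$ directly from the monotonicity of $\Re f(s_0+iy)$ (as in Step~1 of Theorem~\ref{ginibre-3}) rather than recycling the estimates \eqref{trunc-1-upperbound}--\eqref{trunc-1-weight-13}, which were written for a different centering; your edge concern about the $\mathrm{erfc}$ argument is resolved exactly as you anticipate, since $-\xi/\sqrt{2f''(s_0)}=(v_1e^{-i\theta}+\bar v_2 e^{i\theta})/\sqrt{2}$.
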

  \begin{proof}
    For convenience,  let \begin{equation}
      \varphi_j=1+\frac{v_j}{\rho u_{N} \sqrt{MN}}, \quad j=1,2.
    \end{equation}

    {\bf Step 1: Asymptotics for the weight function.}

  Use a  change of variables $s\to -sN$ in   \eqref{truncweight} and the stirling formula, let $s_{0} = q>0$ and the weight function is rewritten   as
    \begin{align}
      w(z_{1})
      = N \Big(1 + \frac{L}{s_{0}N}\Big)^{\frac{M}{2}}  e^{MN f(s_0) - s_0 M N \log \abs{ \varphi_1}^{2} + ML - ML \log N} J,  \label{trunc-3-weight-0}
    \end{align}
    where  for every  $\delta_0>0$
    \begin{align}
      J &=
       \int_{s_0 -i \infty}^{s_0 + i \infty} \frac{ds}{2 \pi i} e^{MN( f(s)-f(s_0)) - (s-s_0) M N \log \abs{ \varphi_j}^{2} - \frac{M}{2} \log \frac{s}{s_0} + \frac{M}{2} \log\frac{L + sN}{L + s_{0}N} + \bigO(\frac{M}{s N})} \nonumber\\
       &=  \Big(\int_{s_0 -i \delta_0}^{s_0 + i \delta_0} \frac{ds}{2 \pi i} +\int_{s_0 - i \infty}^{s_0 -i \delta_0} \frac{ds}{2 \pi i}+ \int_{s_0 +i \delta_0}^{s_0 + i \infty} \frac{ds}{2 \pi i} \Big)\Big(\cdot\Big)=:J_{0}+J_{-}+J_{+},
    \end{align}
    with
    \begin{align}
      f(s) = s \log s - \big(s + \frac{L}{N}\big) \log \big(s + \frac{L}{N}\big) - s \log \abs{u_{N}}^{2}. \label{trunc-3-f}
    \end{align}
    It is easy to see  that  $f'(s)=\log s - \log (s + \frac{L}{N}) - \log\abs{u_{N}}^{2}=0$ has a unique solution $s=s_0$ and  with $s=s_{0}+iy$
     \begin{align}
      &  \frac{\partial}{\partial y} \Re{f (s_0 + iy)}=-\Im f'(s_0 + iy)=-\arctan \frac{y}{s_0} + \arctan \frac{y}{s_0 + \frac{L}{N}}.
    \end{align}
    So $\Re{f (s_0 + iy)}$ with $y\in \mathds{R}$ attains  its  unique  maximum $y=0$, together with the standard steepest decent argument (see e.g. \cite{Wong}) implying that for large $N$ sufficiently
          \begin{align}
 |J_{\pm}|\leq  e^{\frac{1}{2}MN\Re \{f(s_0\pm i\delta_0)-f(s_0)\}}. \label{trunc-3-weight-1}
    \end{align}

    Choose a sufficiently small $\delta_0>0$ and  take a Taylor expansion at $s=s_0$, noting $M/N\to 0$, we have
        \begin{align}
  J_0 &=  \frac{1}{ \sqrt{MN}}\int_{-i \delta_0 \sqrt{MN}}^{ i \delta_0 \sqrt{MN}} \frac{ds}{2 \pi i}
        e^{\frac{1}{2}f''(s_0)s^2 - s  (\frac{v_1}{u}+\frac{\bar{v}_1}{\bar{u}})} \Big(1+ \bigO\Big(\sqrt{\frac{M}{N}}\Big)\Big)\nonumber\\
        &=\frac{1}{ \sqrt{2\pi MN f''(s_{0})} } e^{-\frac{1}{2} (v_1 e^{-i \theta} + \bar{v}_1 e^{i \theta})^{2}} \Big(1+ \bigO\Big(\sqrt{\frac{M}{N}}\Big)\Big). \label{trunc-3-weight-2}
    \end{align}

     Combination of  \eqref{trunc-3-weight-0}, \eqref{trunc-3-weight-1} and \eqref{trunc-3-weight-2}  gives rise to
    \begin{align}
      w(z_{1}) &=
      \abs{ \varphi_1}^{-2MNs_{0}} e^{-\frac{1}{2} (v_1 e^{-i \theta} + \bar{v}_1 e^{i \theta})^{2}} \nonumber \\
       & \times  \Big(1 + \frac{L}{s_{0}N}\Big)^{\frac{M}{2}}\frac{\sqrt{N} e^{MN f(s_{0}) + ML - ML \log N}}{\sqrt{2\pi M f''(s_{0})}}\Big(1+ \bigO\Big(\sqrt{\frac{M}{N}}\Big)\Big), \label{trunc-3-weightest}
    \end{align}
uniformly for $v_1,v_2$ in a compact set of $\mathds{C}$.

 {\bf Step 2: Asymptotics for the finite sum.}

 Write
    \begin{align}
      T_{M,N} = \sum_{j=0}^{N-1} e^{-f_{M,N}(j)},
    \end{align}
    where
 \begin{align}
      f_{M,N}(t) &=M \big(\log \Gamma(L+t+1) -  \log \Gamma(t+1) + t \log \abs{u_{N}}^{2} + t \log (\varphi_1 \bar{\varphi}_2)\big).
    \end{align}
Let $\delta\in (0, q)$, note that for $t \in (0, \delta N)$
    \begin{align} \Re f'_{M,N}(t)
      &= - M\big(\psi(L + t + 1) - \psi(t+1) +
       \log \abs{u_{N}}^{2} + \log |\varphi_1 \bar{\varphi}_2| \big)<0
    \end{align}
when $N$ is large,
we know  that $\Re f_{M,N}(t)$ obtains its minimum at $\delta N$ for $t \in [0, \delta N]$,  so the condition \ref{lem-eulmac-st-1} is satisfied.
%
%
%
%
Apply  the Stirling formula and we obtain
    \begin{align}
      f_{M,N}(tN) &= MNf(t) - t {MN} \log(\varphi_1 \bar{\varphi}_2) - \frac{M}{2} \log \big(1+\frac{L}{tN}\big) + \bigO(\frac{M}{N})\\
      f_{M,N}'(tN) &= M(f'(t) + \bigO((MN)^{-\frac{1}{2}}))
    \end{align}
    uniformly for $t \in [\delta/2,1]$, which implies the condition \ref{lem-eulmac-st0}.

    Recalling   $f(t)$ in \eqref{trunc-3-f}, it is easy to see that $f'(s_0)=0$ and $t=s_0$ is a unique minimum point over $(\delta,1]$.  We thus verify  the condition \ref{lem-eulmac-st1}.

    Now, applying Lemma \ref{lem-eulmac} we get
    asymptotics of the finite sum
    \begin{align}
      T_{M,N}&= (\varphi_1 \bar{\varphi}_2)^{MNs_0} \Big(1 + \frac{L}{s_{0}N}\Big)^{\frac{M}{2}} \frac{\sqrt{N} e^{-MNf(s_{0}) - ML + ML \log N}}{ \sqrt{M f''(s_0)}} \Big(1+ \bigO\Big(\sqrt{\frac{M}{N}}\Big)\Big)\nonumber\\
      &\times  \sqrt{2\pi}  e^{\frac{1}{2} (v_1 e^{-i \theta} + \bar{v}_2 e^{i \theta})^{2}} \begin{cases}1, &  q\in   (0, 1),\\
      \frac{1}{2} \mathrm{erfc}(\frac{v_1 e^{-i \theta} + \bar{v}_2 e^{i \theta}}{\sqrt{2}}), & q=1, \end{cases} \label{trunc-3-totalsum}
    \end{align}
    uniformly for $v_1,v_2$ in a compact set of $\mathds{C}$.


 Putting \eqref{trunc-3-weightest} and  \eqref{trunc-3-totalsum}  together,   we obtain
    \begin{multline}
      K_{M,N}(z_1,z_2)=  \Big(1 + \frac{L}{s_{0}N}\Big)^{M}  \Big( \frac{ z_1 \bar{z}_2 }{|z_1 \bar{z}_2|}\Big)^{MN s_{0}} \frac{N}{M f''(s_{0})} \Big(1+ \bigO\Big(\sqrt{\frac{M}{N}}\Big)\Big)\nonumber\\
      \times  \frac{1}{\pi}  e^{-\frac{1}{2} (|v_1|^2 +|v_2|^2 - v_1 \bar{v}_2) }\begin{cases}1, & q\in   (0, 1),\\
      \frac{1}{2} \mathrm{erfc}(\frac{v_1 e^{-i \theta} + \bar{v}_2 e^{i \theta}}{\sqrt{2}}), &  q=1, \end{cases}
    \end{multline}
     uniformly for $v_1,v_2$ in a compact set of $\mathds{C}$.
     The desired result immediately follows.
 \end{proof}

\section{Products of Ginibre and inverse Ginibre matrices}
\label{sect4}

For two independent  standard complex Ginibre matrices $G_1$ and $G_2$,  Krishnapur  studied  the  quotient  $G_{1}^{-1}G_2$, which is  referred as the spherical ensemble, and  proved   that  the eigenvalues of $G_{1}^{-1}G_2$ form a determinantal point process in the complex plane; see \cite{Kri} and  \cite{HKPV}.    The third product model   under consideration includes the spherical ensemble   as a special   case (see e.g. \cite{Ipsen-Kieburg14}) and  concerns the generalized eigenvalue problem on  two product matrices
$Y_{L} \cdots Y_{1}$ and  $X_{M}\cdots X_{1}$.
More specifically,  it is  defined as
\begin{quote}
{\bf Model C}:   $\Pi_M=(Y_{L} \cdots Y_{1} )^{-1}X_{M}\cdots X_{1}$   where  $X_{1}, \dotsc, X_{M}$, $Y_{1}, \dotsc, Y_{L}$  are  i.i.d.  complex  Ginibre matrices of size $N\times N$, each having  i.i.d.~standard complex Gaussian entries.
\end{quote}
  Its  eigenvalues form a determinantal point process with kernel    \begin{align}
    K_{M, N}(z_{1}, z_{2}) &= \frac{1}{\pi} \sqrt{w(z_{1}) w( z_{2})} T_{M,N},
  \end{align}
  where
  \begin{align}
    w(z)
    = \int_{-c - i \infty}^{-c + i \infty}  \Gamma^{M}(- s) \Gamma^{L}(N + s + 1) \abs{z}^{2s} \label{phereweight}
  \end{align}
  with  $c\in (0, N+1)$ and the finite sum of a truncated series  \begin{align}
    T_{M,N}
    &= \sum_{j=0}^{N-1} \frac{(z_{1}\bar{z}_{2})^{j}}{\Gamma^{M}(j + 1) \Gamma^{L}(N - j)};  \label{pheresum0}
  \end{align}
see \cite{adhikari2013determinantal}.

For Model C, the similar results  as in Model A  are stated  in the following three theorems.  Since their  proofs are parallel to those of  Theorems \ref{ginibre-1},   \ref{ginibre-2} and \ref{ginibre-3},  we may omit some details in the proofs.
  \begin{thm}\label{phere-1}
    Assume that $\displaystyle\lim_{L+M+N \to \infty} (L+M)/N = \infty$. Given  $k \in \{1, \ldots, N\}$, let
    \begin{align}
      \rho 
      &= 2 \big(M\psi'(N - k + 1) + L\psi'(k)\big)^{-\frac{1}{2}}
    \end{align}
    and
    \begin{align}
      z_{j} = e^{\frac{1}{2} (M \psi(N - k + 1) - L \psi(k)) + \frac{v_{j}}{\rho}} e^{i (\theta + \phi_{j})}, \quad j=1, \ldots, n.
    \end{align}
    Then   limiting  $n$-correlation functions for eigenvalues of $\Pi_M$ in Model C
    \begin{align}
      \lim_{L+M+N \to \infty}
        \rho^{-n}  \abs{z_{1}\cdots z_{n}}^2 R^{(n)}_{M,N}(z_{1}, \ldots,z_{n})= \begin{cases}\frac{1}{(\sqrt{2\pi})^3}
        e^{-  \frac{1}{2} v_{1}^{2} }, &n=1,\\
       0, & n>1, \end{cases}
    \end{align}
     hold   uniformly for $v_1, \ldots,v_n$ in a compact subset of $\mathbb{R}$ and $\phi_1, \ldots,\phi_n\in (-\pi,\pi]$.
  \end{thm}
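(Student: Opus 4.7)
The plan is to mimic the two-step strategy of Theorem~\ref{ginibre-1}, treating the finite sum $T_{M,N}$ in~\eqref{pheresum0} and the weight function $w(z)$ in~\eqref{phereweight} separately and combining them at the end. The essential new feature is that Model~C involves two gamma-function scales $M/(N-k+1)$ and $L/k$, but their combined contribution to the Gaussian width, namely $4/\rho^{2}=M\psi'(N-k+1)+L\psi'(k)$, is bounded below by $(L+M)/N$ (since $\psi'(a)>1/a$) and therefore tends to infinity under the hypothesis. Hence the ``$j=0$ dominates'' mechanism driving Theorem~\ref{ginibre-1} remains in force.

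First I reindex $j\to N-k-j$ in~\eqref{pheresum0}, pull out the $j=0$ factor, and rewrite the exponent using the $g$-function of Lemma~\ref{lem-g} as
\begin{equation*}
\Phi_{M,N}(t)=\tfrac{M}{N-k+1}g(N-k+1;t)+\tfrac{L}{k}g(k;-t)+\tfrac{t(v_{1}+v_{2})}{\rho},
\end{equation*}
whose Taylor expansion at $t=0$ has quadratic coefficient $2/\rho^{2}\to\infty$. Following Case~1 of Theorem~\ref{ginibre-1}, I split the sum into the inner window $|j|\le\lfloor\min\{(N-k)^{1/4},k^{1/4}\}\rfloor$ and its complement, applying statement~\ref{lem-g3} of Lemma~\ref{lem-g} to each $g$-piece separately on the inner part to secure a uniform quadratic lower bound, and using the strict monotonicity of statement~\ref{lem-g1} on the outer part. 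When $N-k+1$ or $k$ is bounded I revert to the Case~2 argument, invoking strict convexity to furnish an absolute gap at $j=\pm 1$. This yields $T_{M,N}=(z_{1}\bar z_{2})^{N-k}\bigl(\Gamma^{M}(N-k+1)\Gamma^{L}(k)\bigr)^{-1}(1+o(1))$. Next I substitute $s\to s-(N-k+1)$ in~\eqref{phereweight}, converting $w(z_{1})$ to a contour integral whose total exponent $\tfrac{M}{N-k+1}g(N-k+1;s)+\tfrac{L}{k}g(k;-s)+2sv_{1}/\rho$ has the same quadratic structure at the saddle $s=0$. For the tail bound on $s=iy$ I add the two estimates from statement~\ref{lem-g2}, splitting the $y$-axis into three regions relative to $\min(N-k+1,k)$ and $\max(N-k+1,k)$ in the manner of~\eqref{trunc-1-weight-11}--\eqref{trunc-1-weight-13}, while the inner segment $|s|\le\delta$ yields the Gaussian factor by standard steepest descent. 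Multiplying the two asymptotics cancels the $\Gamma^{M}\Gamma^{L}$ factors and produces
\begin{equation*}
K_{M,N}(z_{1},z_{2})\sim\frac{\rho}{(2\pi)^{3/2}}\,e^{-(v_{1}^{2}+v_{2}^{2})/4}\,\frac{(z_{1}\bar z_{2})^{N-k}}{|z_{1}z_{2}|^{N-k+1}}.
\end{equation*}
Incorporating the Jacobian $|z_{j}|^{2}/\rho$ of the change of coordinates then gives the $n=1$ limit immediately; for $n\ge 2$ the leading-order kernel factors as $A_{i}B_{j}$ with $A_{i},B_{j}$ depending only on $(v_{i},\phi_{i})$ individually, hence has rank one, forcing the determinant to vanish at leading order while the first-order corrections contribute an $o(1)$ term uniformly in $\phi_1,\ldots,\phi_n$.

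I expect the principal obstacle to lie in the case analysis driven by whether each of $N-k+1$ and $k$ grows or stays bounded. When either remains bounded the large-$a$ approximation $g(a;t)\approx t^{2}/2$ of statement~\ref{lem-g3} is unavailable for the corresponding factor, so the Taylor coefficient must be kept as $a\psi'(a)/2$ and carefully balanced against its partner. The uniform inequality $4/\rho^{2}\ge(L+M)/N$ is the only ingredient guaranteeing the ``$j=0$ dominates'' conclusion across all $k\in\{1,\ldots,N\}$, so verifying this bound—together with the analogous uniform tail control on the weight-function integral—in every sub-case is where the bulk of the technical work resides.
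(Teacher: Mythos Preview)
Your two-step template, the reindexing to center at $j=0$, the decomposition of the exponent as
\[
\Phi_{M,N}(t)=\tfrac{M}{N-k+1}g(N-k+1;t)+\tfrac{L}{k}g(k;-t)+\tfrac{t(v_{1}+v_{2})}{\rho},
\]
and the final combination are exactly what the paper does. The difference lies in how the two gamma scales are handled, and this is precisely the ``principal obstacle'' you flag at the end.

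You propose to carry both $g$-pieces in parallel: take the inner window $|j|\le\min\{(N-k)^{1/4},k^{1/4}\}$, apply statement~\ref{lem-g3} to each piece, and then run a four-way case analysis according to whether each of $N-k+1$ and $k$ grows or stays bounded; likewise for the weight function you split the imaginary axis into three regions governed by $\min(N-k+1,k)$ and $\max(N-k+1,k)$. This is correct, but the paper collapses all of it with one observation: by symmetry one may assume $M/(N-k+1)\ge L/k$, whence $M/(N-k+1)\ge(L+M)/(N+1)\to\infty$, and since $g(k;-t)\ge 0$ on the whole range $t\in[1-k,N-k]$ one can simply \emph{discard} the $L$-term in the lower bound. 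This reduces the finite-sum estimate verbatim to the single-scale argument~\eqref{gin1-lbound}--\eqref{ginibre-1-sum-out} of Theorem~\ref{ginibre-1}, leaving only the two cases ($N-k+1$ large or bounded) already treated there. The same WLOG is reused for the weight function, so your three-region decay estimate becomes the two-region bound of the Ginibre proof. Your uniform inequality $4/\rho^{2}\ge(L+M)/N$ is correct and would drive your version through, but the paper's reduction makes the dependence on $k$ disappear entirely from the tail estimates, which is what lets it quote Theorem~\ref{ginibre-1} almost verbatim rather than redo the case analysis.
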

  \begin{proof}

    {\bf Step 1: Asymptotics for the finite sum.}

    Recalling the choice of  $z_{1}$ and  $z_{2}$ in the setting and changing the  summation index $j\to N-k-j$, we rewrite  the finite sum $T_{M,N}$ in \eqref{pheresum0} as
    \begin{align}
      T_{M,N}
      &= (z_{1}\bar{z}_{2})^{N - k} \sum_{j=1-k}^{N-k} e^{- f_{L,M,N}(j)} e^{i j ( \phi_{1}-\phi_{2})}, \label{pheresum}
    \end{align}
    where
    \begin{equation}\label{phere-1-fN}
      \begin{split}
        f_{L,M,N}(t)
        &= M \big(\log\Gamma(N - k - t + 1) + t\psi(N - k + 1)\big) \\
        &\quad + L \big(\log\Gamma(k + t) - t\psi(k)\big) + t \frac{v_{1} + v_{2}}{\rho}.
      \end{split}
    \end{equation}
  We will see    that the term of $j=0$ in  the summation of \eqref{pheresum}  gives a dominant contribution and the total contribution of the other terms is ignorable    as $L+M+N\to \infty$.

    Using  the notation in Lemma \ref{lem-g} we  have
    \begin{align}
      f_{L,M,N}(t) - f_{L,M,N}(0)
      = \frac{M}{N - k+ 1} g(N - k+ 1;t) + \frac{L}{k} g(k; -t) + t\frac{v_{1} + v_{2}}{\rho}.
    \end{align}
    Without loss of generality, we may assume $M/(N - k + 1) \geq L/k$.  Noting  $(L+M)/N \to \infty$, we  derive  that $M/(N - k + 1)\to \infty$ and
    \begin{align}
      \rho^{-1} \leq \frac{C}{2} \sqrt{M\psi'(N-k+1)}
    \end{align}
    for some positive constant $C$ independent of $L,M,N$.  Also noting  that $g(k, -t)\geq 0$  for all $t \in[1-k,N-k]$ we have
    \begin{equation}\label{phere-1-ineq1}
      \begin{split}
        &f_{L,M,N}(t) - f_{L,M,N}(0)\\
        &\geq \frac{M}{N - k+ 1} g(N - k+ 1; t) - \frac{C}{2} \sqrt{M\psi'(N-k+1)} |(v_{1}+v_2)t|,
      \end{split}
    \end{equation}
 from which    we can obtain a  similar lower bound to \eqref{gin1-lbound} as in the proof of Theorem \ref{ginibre-1}. Noting
  \begin{align}
      f''_{L,M,N}(0) =
      M\psi'(N - k + 1) + L\psi'(k),
    \end{align}
 we further use    the similar arguments  between \eqref{gin1-lbound}--\eqref{ginibre-1-sum-out}  in the proof of Theorem \ref{ginibre-1} to get
    \begin{align}
      T_{M,N} = (z_{1} \bar{z}_{2})^{N - k} e^{-f_{L,M,N}(0)}\Big (1 +
        \bigO\big(\sqrt{\frac{N-k+1}{M}} \big)\Big). \label{phere-1-sum1}
    \end{align}

    {\bf Step 2: Asymptotics for the weight function.}

    After  a change of variables $s\to s-(N-k+1)$ in   \eqref{phereweight}, the weight function can be rewritten   as
    \begin{align}
      w(z_{1})= \abs{z_{1}}^{- 2(N - k + 1)} \int_{-i\infty}^{\infty} \frac{ds}{2 \pi i} e^{\tilde{f}_{M,L,N}(s) + 2s\frac{v_{1}}{\rho}},
    \end{align}
    where
    \begin{align}
      \tilde{f}_{L,M,N}(s)
      &= M (\log\Gamma(N - k - s + 1) + s\psi(N - k + 1)) \nonumber\\
      &\quad + L (\log\Gamma(k + s) - s\psi(k)).
    \end{align}
    Let $s=iy$,  we see from statement \ref{lem-g2} in Lemma \ref{lem-g} that
    \begin{equation}
      \begin{split}
        &\Re{\tilde{f}_{L,M,N}(iy)} - \Re{\tilde{f}_{L,M,N}(0)}\\
        &= \frac{M}{N - k + 1} \Re{g(N - k + 1; iy)} + \frac{L}{k} \Re{g(k ;-iy)}\\
        &\leq-M \frac{\abs{y}}{4} \min\Big\{1, \frac{\abs{y}}{N - k + 1}\Big\} - L \frac{\abs{y}}{4} \min\Big\{1, \frac{\abs{y}}{k}\Big\}.
      \end{split}
    \end{equation}
   Without loss of generality, we may consider the case that  $M/(N - k + 1) \geq L/k$. For any $\delta > 0$, it follows that
    \begin{align}
      & \Big |\Big(\int_{i \delta}^{i\infty}+ \int_{-i\infty}^{-i \delta} \Big)\frac{ds}{2 \pi i } e^{(\tilde{f}_{L,M,N}(s)-\tilde{f}_{L,M,N}(0)) + 2s \frac{v_1}{\rho}}  \Big|
      \nonumber\\
      &\leq  2 \Big(\int_{ \delta}^{N -  k+1}dy\,  e^{-\frac{M}{4(N - k+1)} y^{2}}  + \int_{N -  k+1}^{\infty}dy\,  e^{-\frac{M}{8}  y}\Big)\nonumber\\
      &\leq
        \frac{2(N-k+1)}{M\delta } e^{-\frac{M\delta^2}{4(N - k+1)}  }  +    \frac{8}{M} e^{-\frac{1}{8}M(N-k+1)}, \label{phere-1-weight-1}
    \end{align}
    which decays  exponentially to zero.
Meanwhile, take a Taylor expansion of $\tilde{f}_{L,M,N}(s)$ at $s=0$  origin, rescale  the  variable and we obtain
    \begin{align}
      &\int_{-i\delta}^{i\delta} \frac{ds}{2 \pi i} e^{(\tilde{f}_{M,L,N}(s) - \tilde{f}_{M,L,N}(0)) + 2s\frac{v_{1}}{\rho}}\nonumber \\
    &  = \frac{1}{\sqrt{\tilde{f}_{M,L,N}''(0)}} \frac{1}{\sqrt{2 \pi }}
     e^{- \frac{v_{1}^2}{2}}  \Big(1 +  \bigO\Big(\frac{1}{\sqrt{ M \psi'(N - k + 1)}}\Big)\Big).\label{phere-1-weight-2}
    \end{align}
    Combination of  \eqref{phere-1-weight-1} and \eqref{phere-1-weight-2} gives rise to
    \begin{equation}\label{phere-1-weight}
      w(z_{1})
      =\frac{\abs{z_{1}}^{-2(N - k + 1)} }{ \sqrt{2\pi \tilde{f}_{M,L,N}''(0)}} e^{\tilde{f}_{L,M,N}(0)-\frac{v_{1}^2}{2}}  (1 +  o(1)).
    \end{equation}

    Combine  \eqref{phere-1-sum1} and \eqref{phere-1-weight}, we get an  approximation of the correlation  kernel, uniformly for $v_1, v_2$ in a compact subset of $\mathbb{R}$,
    \begin{align}
       K_{M, N}(z_{1}, z_{2}) = \frac{1}{(2 \pi)^{\frac{3}{2}}} e^{- \frac{1}{4} (v_{1}^{2} + v_{2}^{2})}  \frac{(z_{1}\bar{z}_{2})^{N-k}}{\abs{z_{1}z_{2}}^{N-k+1}}   \frac{1+o(1)}{\sqrt{\tilde{f}_{M,L,N}''(0)} },
    \end{align}
    from which   Theorem \ref{phere-1}   immediately follows.
  \end{proof}

  \begin{thm}\label{phere-2}
    Assume that $\displaystyle\lim_{L+M+N \to \infty} M/N=\gamma_{1}\in [0,\infty)$ and $\displaystyle\lim_{L+M+N \to \infty} L/N=\gamma_{2}\in [0,\infty)$ with $\gamma_{1} + \gamma_{2} > 0$.
  Given   $q \in (0, 1)$, let  $u_{N} = \abs{u_{N}} e^{i \theta}$ with $\theta \in (- \pi, \pi]$ such that \begin{equation} \abs{u_{N}}^{2} = \Big(1 - \frac{1}{N}\lfloor qN \rfloor\Big)^{\frac{M}{M+L}}  \Big( \frac{1}{N}+  \frac{1}{N}\lfloor qN \rfloor\Big)^{-\frac{L}{M+L}}, \end{equation}
  and  set  \begin{align}
      z_{j} = N^{M-L} u_{N}^{M+L}    \Big(1  + \frac{v_j-
      \frac{\gamma_{1}}{4(1-q)} - \frac{\gamma_{2}}{4q}   }{\sqrt{(\gamma_{1}+\gamma_{2}) (M+L)N}}\Big)^{M+L}, \quad j=1,\ldots,n.
    \end{align}
 Then   limiting  $n$-correlation functions for eigenvalues of $\Pi_M$ in Model C
     \begin{multline}
      \lim_{M+N \to \infty}
        \abs{z_{1}\cdots z_{n}}^2 R^{(n)}_{M,N}(z_{1}, \ldots,z_{n}) \\
       =\det\!\big [  \mathrm{K^{(bulk)}_{\mathrm{crit}}}(\frac{\gamma_{1}}{1-q} + \frac{\gamma_{2}}{q};v_i, v_j)\big]_{i,j=1}^n
    \end{multline}
     hold   uniformly for $v_1, \ldots,v_n$ in a compact subset of $\mathds{C}$.
   \end{thm}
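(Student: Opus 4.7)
The proof will follow the two-step scheme of Theorems \ref{ginibre-2} and \ref{trunc-2}: obtain sharp asymptotics for the finite sum $T_{M,N}$ in \eqref{pheresum0} and for the weight function $w$ in \eqref{phereweight} separately, then combine them in $K_{M,N}$ to read off the critical bulk kernel. The rescaling in the statement is engineered so that the bulk saddle of $T_{M,N}$ sits at $j = N - \lfloor qN\rfloor - 1$, the second derivative of the action there equals $\beta := \gamma_{1}/(1-q) + \gamma_{2}/q$, and the finite shift $-\gamma_{1}/(4(1-q)) - \gamma_{2}/(4q)$ absorbs the $\bigO(1)$ Stirling correction in the linear term of the local Taylor expansion.

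\textbf{Step 1: the finite sum.} Apply Cauchy's residue theorem to rewrite
\begin{equation*}
T_{M,N} = \oint_{\Sigma} \frac{dt}{2\pi i}\, \frac{(-1)^{t}\,\pi\,(z_{1}\bar{z}_{2})^{t}}{\sin(\pi t)\,\Gamma^{M}(t+1)\,\Gamma^{L}(N-t)},
\end{equation*}
and after shifting $t \to N-\lfloor qN\rfloor - 1 - t$ choose $\Sigma$ to be the rectangular contour \eqref{crit-sum-contours-bulk} encircling $\{-\lfloor qN\rfloor,\ldots,N-1-\lfloor qN\rfloor\}$. The exponent combines both gamma factors into
\begin{equation*}
f_{L,M,N}(t) = M\log\Gamma(N-\lfloor qN\rfloor-t) + L\log\Gamma(\lfloor qN\rfloor+1+t) - t\log(z_{1}\bar{z}_{2}).
\end{equation*}
Statement \ref{lem-g3} of Lemma \ref{lem-g} applied to the two gamma factors, together with the digamma expansion \eqref{asy-psi}, yields the local expansion
\begin{equation*}
f_{L,M,N}(t) - f_{L,M,N}(0) = \tfrac{1}{2}(\beta + o(1))t^{2} + (v_{1}+\bar{v}_{2}+o(1))t + \bigO(N^{-1/4})
\end{equation*}
uniformly for $|t| = \bigO(N^{1/4})$; this is the joint analogue of \eqref{asy-crit-1} and \eqref{trucn-2-sum-1}. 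Taking $N \to \infty$ on the central pieces $\Sigma_{1}^{\pm}$ and summing the residues of $\pi(-1)^{t}/\sin(\pi t)$ at integers yields the theta sum $\sum_{j\in\mathbb{Z}} e^{-\beta j^{2}/2 - j(v_{1}+\bar{v}_{2})}$. On the tail pieces $\Sigma_{2}^{\pm}, \Sigma_{3}, \Sigma_{4}, \Sigma_{5}^{\pm}$, convexity of $\Re f_{L,M,N}$ (which follows from positivity of $f''_{L,M,N}$ via the series representation of $\psi'$) reduces the bound to the endpoints $t = \pm\lfloor N^{1/4}\rfloor$, where the Taylor expansion gives a decay of order $e^{-\beta\sqrt{N}/8}$, exactly as in \eqref{trunc-2-est-1}--\eqref{trunc-2-est-4}.

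\textbf{Step 2: the weight function.} Shifting $s \to s - (N-\lfloor qN\rfloor)$ in \eqref{phereweight} rewrites $w(z_{1})$ as $|z_{1}|^{-2(N-\lfloor qN\rfloor)}$ times a Mellin integral along the imaginary axis whose integrand has the same two-gamma structure. Statement \ref{lem-g2} of Lemma \ref{lem-g}, applied to both $\Gamma^{M}$ and $\Gamma^{L}$ factors, produces a decay estimate of the form $\Re(\tilde f(iy) - \tilde f(0)) \leq -c(M+L)|y|\min\{1,|y|/N\}$, so that the contribution from $|s| > N^{1/4}$ is exponentially small. A Taylor expansion inside $|s| \leq N^{1/4}$ then reproduces the Gaussian $\tfrac{1}{\sqrt{2\pi\beta}}e^{-(v_{1}+\bar{v}_{1})^{2}/(2\beta)}$, exactly as in \eqref{2-weight-3}. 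Combining the two steps and noting that the phase $(z_{1}\bar{z}_{2})^{N-\lfloor qN\rfloor-1}/|z_{1}z_{2}|^{N-\lfloor qN\rfloor}$ cancels in the determinantal correlation functions delivers the claimed limit $\det[\mathrm{K^{(bulk)}_{\mathrm{crit}}}(\beta;v_{i},v_{j})]$.

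\textbf{Main obstacle.} The technical heart of the argument is the tail bound in Step 1, since the contour $\Sigma$ must be controlled uniformly for both gamma factors. In Model A only a single factor enters, so one-sided monotonicity of $g(a;\cdot)$ suffices; here one must exploit the two competing scales $M/(N-\lfloor qN\rfloor) \to \gamma_{1}/(1-q)$ and $L/(\lfloor qN\rfloor+1) \to \gamma_{2}/q$ in tandem to cover the full interval $[-\lfloor qN\rfloor-\tfrac{1}{2}, N-\lfloor qN\rfloor-\tfrac{1}{2}]$: the $\Gamma^{M}$-contribution dominates on the right tail $\Sigma_{5}^{\pm}$ while the $\Gamma^{L}$-contribution dominates on the left tail $\Sigma_{2}^{\pm}$. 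A secondary subtlety is that the cancellation of the linear Taylor coefficient at $t=0$ requires tracking the next-to-leading Stirling correction in $\psi$, which is precisely what the shift $\gamma_{1}/(4(1-q)) + \gamma_{2}/(4q)$ in the rescaled coordinate is designed to absorb.
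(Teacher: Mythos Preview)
Your proposal is correct and follows essentially the same two-step approach as the paper: a contour-integral representation of $T_{M,N}$ centred at $N-\lfloor qN\rfloor-1$ with the rectangular contour \eqref{crit-sum-contours-bulk}, a local Taylor expansion via statement~\ref{lem-g3} of Lemma~\ref{lem-g} giving the theta sum, and a Mellin analysis of $w$ via statement~\ref{lem-g2}. The only stylistic difference is that you justify the tail bound by convexity of $\Re f_{L,M,N}$ and endpoint evaluation (as in the proof of Theorem~\ref{trunc-2}), whereas the paper's own proof of this theorem invokes statement~\ref{lem-g3} directly for both gamma factors; the two justifications are equivalent.
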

   \begin{proof}  {\bf Step 1: Asymptotics for the finite sum.}

     By Cauchy's  residue theorem,  we rewrite the finite sum  $T_{M,N}$ as
    \begin{align}
      T_{M,N}
      &= (z_{1} \bar{z}_{2})^{N- \lfloor qN \rfloor - 1} \oint_{\Sigma}      \frac{dt}{2 \pi i} \frac{(z_{1} \bar{z}_{2})^{-t} (-1)^{t} \pi}{\Gamma^{M}(N - \lfloor qN \rfloor - t) \Gamma^{L}(\lfloor qN \rfloor + t + 1) \sin \pi t}, \label{phere-2-sum-int}
    \end{align}
    where $\Sigma$ is an anticlockwise contour  just encircling    $ - \lfloor qN \rfloor, 1- \lfloor qN \rfloor \ldots, N-1- \lfloor qN \rfloor$. Here we choose it as   in  \eqref{crit-sum-contours-bulk}.

    Using \eqref{asy-psi}  and part \ref{lem-g3} in Lemma \ref{lem-g}, as $N\to \infty$ we have for   $\abs{t} \leq N^{\frac{1}{4}}$
    \begin{align}
       & \log\bigg( \frac{\Gamma^{M}(N - \lfloor qN \rfloor) \Gamma^{L}(\lfloor qN \rfloor + 1) (z_{1} \bar{z}_{2})^{-t}}{\Gamma^{M}(N - \lfloor qN \rfloor - t) \Gamma^{L}(\lfloor qN \rfloor + t + 1)}\bigg)\nonumber\\
        &=-\frac{1}{2}\Big(\frac{M}{N - \lfloor qN \rfloor} + \frac{L}{\lfloor qN \rfloor + 1}\Big)t^{2} - (v_{1} + \bar{v}_{2}+o(1)) t + \bigO(N^{-\frac{1}{4}}). \label{phere-asy-crit-1}
    \end{align}
   So  with the chosen  contour in mind, as $L+M+N\to \infty$  we have from \eqref{phere-asy-crit-1} that
    \begin{align}
      & \int_{\Sigma_{1}^{+} \cup   \Sigma_{1}^{-} } \frac{dt}{2 \pi i} \frac{\Gamma^{M}(N - \lfloor qN \rfloor) \Gamma^{L}(\lfloor qN \rfloor + 1) (z_{1} \bar{z}_{2})^{-t} (-1)^{t}\pi}{\Gamma^{M}(N - \lfloor qN \rfloor - t)  \Gamma^{L}(\lfloor qN \rfloor + t + 1) \sin \pi t}\nonumber\\
      &\to \int_{\mathds{R}\pm \frac{i}{8}} \frac{dt}{2 \pi i} \frac{(-1)^{t}\pi}{\sin \pi t} e^{- \frac{1}{2} \beta t^{2} - t (v_1+
       \bar{v}_2)}=   \sum_{j = - \infty}^{\infty} e^{- \frac{1}{2} \beta  j^{2} - j(v_1+\bar{v}_2) },
    \end{align}
    uniformly for $v_1,v_2$ in a compact set of $\mathbb{C}$, where
    \begin{equation}
      \beta:=\frac{\gamma_{1}}{1-q} + \frac{\gamma_{2}}{q}.
    \end{equation}

    The remaining task is to prove that  the remainder of the integral in  \eqref{phere-2-sum-int} is negligible. Taking part \ref{lem-g3} in Lemma \ref{lem-g} into consideration, we see that
    \begin{align}
      &\max_{t \in \Sigma \backslash \Sigma_{1}^{\pm}} \abs{\frac{\Gamma^{M}(N - \lfloor qN \rfloor) \Gamma^{L}(\lfloor qN \rfloor + 1) (z_{1} \bar{z}_{2})^{-t}}{\Gamma^{M}(N - \lfloor qN \rfloor - t) \Gamma^{L}(\lfloor qN \rfloor + t + 1)}}
      \leq e^{-\frac{1}{2} \beta \sqrt{N}}
    \end{align}
    holds for sufficiently large $N$. Note that $|(-1)^t /\sin\pi t|$ has an upper bound independent of $L,M,N$   whenever $t$ belongs to the chosen contour,  we thus obtain
    \begin{align}
      \abs{\int_{\Sigma \backslash \Sigma_{1}^{\pm}} \frac{dt}{2 \pi i} \frac{\Gamma^{M}(N - \lfloor qN \rfloor) \Gamma^{L}(\lfloor qN \rfloor + 1) (z_{1} \bar{z}_{2})^{-t} (-1)^{t}}{\Gamma^{M}(N - \lfloor qN \rfloor - t) \Gamma^{L}(\lfloor qN \rfloor + t + 1) \sin \pi t}} \leq e^{- \frac{1}{4} \beta \sqrt{N}}
    \end{align}
    when $N$ is large.

 In short,  as $N\to \infty$  we have  uniformly for $v_1,v_2$ in a compact set of $\mathbb{C}$
    \begin{align}\label{phere-2-sum}
      &\Gamma^{M}(N - \lfloor qN \rfloor) \Gamma^{L}(\lfloor qN \rfloor + 1) (z_{1} \bar{z}_{2})^{\lfloor qN \rfloor-N+1} T_{M,N}\nonumber \\
      &\longrightarrow \sum_{j = - \infty}^{\infty} e^{- \frac{1}{2} \beta  j^{2} - j(v_1+\bar{v}_2) }.
    \end{align}

 {\bf Step 2: Asymptotics for the weight function.}

 Changing  $s\to -s- (N - \lfloor qN \rfloor)$
  in the weight function $w(z_{1})$ gives
   \begin{align}
      w(z_{1})= \abs{z_{1}}^{-2(N - \lfloor qN \rfloor)} \int_{- i\infty}^{i\infty} \frac{ds}{2 \pi i}
      \Gamma^{M}(N - \lfloor qN \rfloor - s) \Gamma^{L}(\lfloor qN \rfloor + s + 1)  |z_1|^{2s},
    \end{align}
    Let $s=iy$, using statement \ref{lem-g2} in Lemma \ref{lem-g}, we can derive
    \begin{align}
      &\abs{\frac{\Gamma^{M}(N - \lfloor qN \rfloor - s) \Gamma^{L}(\lfloor qN \rfloor + s + 1)}{\Gamma^{M}(N - \lfloor qN \rfloor) \Gamma^{L}(\lfloor qN \rfloor + 1)} \abs{z_{1}}^{2s}} \nonumber\\
      &\leq \exp\Big\{- \frac{M\abs{y}}{4} \min\Big\{1, \frac{\abs{y}}{N - \lfloor qN \rfloor}\Big\} - \frac{L\abs{y}}{4} \min\Big\{1, \frac{\abs{y}}{\lfloor qN \rfloor+1}\Big\}\Big\},
    \end{align}
    from which we know  that
    \begin{align}
      &\abs{\Big(\int_{- i \infty}^{-i N^{\frac{1}{4}}} + \int_{i N^{\frac{1}{4}}}^{i \infty}\Big) \frac{ds}{2 \pi i} \frac{\Gamma^{M}(N - \lfloor qN \rfloor - s) \Gamma^{L}(\lfloor qN \rfloor + s + 1)}{\Gamma^{M}(N - \lfloor qN \rfloor) \Gamma^{L}(\lfloor qN \rfloor + 1)} \abs{z_{1}}^{2 s}}\nonumber\\
      &\leq \int_{N^{\frac{1}{4}}}^{N - \lfloor qN \rfloor}dy  e^{-\frac{M}{4(N - \lfloor qN \rfloor)} y^{2}}  + \int_{N - \lfloor qN \rfloor}^{\infty}dy  e^{-\frac{M}{8}  y} \nonumber\\
      &\leq N^{-\frac{1}{4}}e^{-\frac{M\sqrt{N}}{4(N - \lfloor qN \rfloor)}  }  +    \frac{8}{M} e^{-\frac{1}{8}M(N - \lfloor qN \rfloor)}. \label{phere-2-weight-1}
    \end{align}
    When $|s|\leq N^{\frac{1}{4}}$, we see from  \eqref{phere-asy-crit-1} that
    \begin{align}
      &\int_{- i N^{\frac{1}{4}}}^{i N^{\frac{1}{4}}} \frac{ds}{2 \pi i} \frac{\Gamma^{M}(N - \lfloor qN \rfloor - s) \Gamma^{L}(\lfloor qN \rfloor + s + 1)}{\Gamma^{M}(N - \lfloor qN \rfloor) \Gamma^{L}(\lfloor qN \rfloor + 1)} \abs{z_{1}}^{2 s} \nonumber\\
      &\longrightarrow   \int_{- i \infty}^{i \infty} \frac{ds}{2 \pi i} e^{\frac{1}{2} \beta s^{2} + s(v_1+\bar{v}_1)}=\frac{1}{\sqrt{2\pi \beta}} e^{-\frac{1}{2\beta}(v_1+\bar{v}_1)^2}.
    \end{align}
    Together with \eqref{phere-2-weight-1}, we have
      \begin{align}
      &\frac{\abs{z_{1}}^{2(N - \lfloor qN \rfloor)}}{\Gamma^{M}(N - \lfloor qN \rfloor) \Gamma^{L}(\lfloor qN \rfloor + 1)}  w(z_1)
     \longrightarrow   \frac{1}{\sqrt{2\pi \beta}} e^{-\frac{1}{2\beta}(v_1+\bar{v}_1)^2}.\label{phere-2-weight-2}
    \end{align}

    Combining \eqref{phere-2-sum} and \eqref{phere-2-weight-2}, we get an  approximation of the correlation  kernel, uniformly for $v_1, v_2$ in a compact subset of $\mathbb{C}$,
    \begin{align}
       K_{M, N}(z_{1}, z_{2}) = \frac{(z_{1}\bar{z}_{2})^{N-\lfloor qN \rfloor-1}}{\abs{z_{1}z_{2}}^{N-\lfloor qN \rfloor}} \mathrm{K^{(bulk)}_{\mathrm{crit}}}(\beta;v_1,v_2) (1+o(1)),
    \end{align}
    from which   Theorem \ref{phere-2}   immediately follows.
   \end{proof}

   \begin{thm}\label{phere-3}
     Assume that $\displaystyle\lim_{L+M+N \to \infty} (M+L)/N = 0$.  Given  $q \in (0, 1)$,  choose an $M$- and $N$-dependent  spectral variable  $u = \abs{u} e^{i \theta}$ with $\theta \in (- \pi, \pi]$  such that
      \begin{align}
     \abs{u}^{2} = q^{\frac{M}{L+M}} (1-q)^{-\frac{L}{L+M}}.
   \end{align}
     Let
     \begin{align}
       \rho
       = \frac{q(1-q)(L+M)}{\abs{u} ((1-q)M + qL)}
     \end{align}
     and
     \begin{align}
       z_{j} = N^{\frac{M-L}{2}} u^{L+M} \Big(1 + \frac{v_{j}}{u \rho\sqrt{(L+M)N}}\Big)^{L+M}.
     \end{align}
 Then   limiting  $n$-correlation functions for eigenvalues of $\Pi_M$ in Model C
     \begin{multline}
      \lim_{L+M+N \to \infty}
        \Big(\frac{M}{\rho \abs{u} N}\Big)^{n} \abs{z_{1}\cdots z_{n}}^2   R^{(n)}_{M,N}(z_{1}, \ldots,z_{n}) \\
       =\det\!\big [  \mathrm{K^{(bulk)}_{\mathrm{Gin}}}(v_i, v_j)\big]_{i,j=1}^n
    \end{multline}
     hold   uniformly for $v_1, \ldots,v_n$ in a compact subset of $\mathbb{C}$.
   \end{thm}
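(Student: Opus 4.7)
The plan is to emulate the two-step scheme of Theorem \ref{ginibre-3}, bundling the Ginibre and inverse-Ginibre contributions into a single action. Setting $\alpha = M/(L+M)$ and $\beta = L/(L+M)$, define
\begin{align}
f(s) = \alpha(s\log s - s) + \beta((1-s)\log(1-s) - (1-s)) - s \log\abs{u}^{2}.
\end{align}
The defining identity $\abs{u}^{2} = q^{\alpha}(1-q)^{-\beta}$ forces $f'(q)=0$, while $f''(q) = \alpha/q + \beta/(1-q) = ((1-q)M + qL)/(q(1-q)(L+M))$ satisfies $\rho\abs{u}f''(q) = 1$. Because $q\in(0,1)$ is strictly interior, the saddle lies in the bulk and only the Ginibre bulk kernel (no $\mathrm{erfc}$) will arise. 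The key new feature is that the $N^{(M-L)/2}$ prefactor in $z_j$ is engineered precisely to cancel every $s\log N$ term under Stirling, so that only the finite action $f(s)$ governs the saddle-point analysis.

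For the weight function \eqref{phereweight}, I would change variables $s\to -sN$ and apply Stirling to $\Gamma^{M}(sN)$ and $\Gamma^{L}((1-s)N+1)$. After the $s\log N$ cancellation, the exponent reads $(L+M)N(f(s)-f(q)) - s(L+M)N \log\abs{\varphi_1}^{2} + \bigO(\log N)$, where $\varphi_1 = 1 + v_1/(u\rho\sqrt{(L+M)N})$. Concavity of $\Re f(q+iy)$ in $y$ (proved as in \eqref{trunc-1-upperbound}) confines the dominant contribution to a small vertical segment through $s_0 = q$, on which a Gaussian evaluation delivers a prefactor of order $1/\sqrt{2\pi(L+M)N f''(q)}$ multiplied by the Gaussian weight in $v_1 e^{-i\theta} + \bar v_1 e^{i\theta}$, exactly mirroring \eqref{3-weight-2}.

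For the finite sum, write $T_{M,N} = \sum_{j=0}^{N-1} e^{-f_{M,N}(j)}$ with $f_{M,N}(t) = M\log\Gamma(t+1) + L\log\Gamma(N-t) - t\log(z_1\bar z_2)$. Substituting $t=\tau N$ and applying Stirling, the same $\log N$ cancellation yields
\begin{align}
f_{M,N}(\tau N) = \mathrm{const} + (L+M)N f(\tau) + \sqrt{(L+M)N}\,\xi \tau + Q_{M,N}(\xi,\tau),
\end{align}
with $\xi = -(v_1 e^{-i\theta} + \bar v_2 e^{i\theta})/\rho$. Hypothesis \ref{lem-eulmac-st-1} is checked from $\Re f'_{M,N}(t) = M\psi(t+1) - L\psi(N-t) - \log\abs{z_1\bar z_2} < 0$ on $(0,\delta N)$ for small $\delta$, and \ref{lem-eulmac-st1} from the strict convexity of $f$ with interior minimum at $q$. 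Since $t_0 = q \in (0,1)$, Lemma \ref{lem-eulmac} (bulk branch) produces $T_{M,N}$ times an explicit Gaussian factor in $\xi$ with no $\mathrm{erfc}$.

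Multiplying the weight and sum asymptotics, the two Gaussian exponents recombine (via $\rho\abs{u}f''(q)=1$) into $-\tfrac{1}{2}(\abs{v_1}^2+\abs{v_2}^2-2v_1\bar v_2)$ modulo a purely imaginary phase of the form $h(v_1)-\overline{h(v_2)}$, which is killed by the gauge freedom of determinantal kernels; the leftover unimodular prefactor is absorbed into $\abs{z_1\cdots z_n}^2$; and the residual amplitude assembles into the stated normalization $(M/(\rho\abs{u}N))^n$. The main technical obstacle is the uniform verification of \ref{lem-eulmac-st0}: we must bound $Q_{M,N}$ and $Q'_{M,N}$ by $o(\sqrt{(L+M)N})$ in the regime where $L$ and $M$ may be of wildly different sizes. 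This reduces to $\max(L,M)/\sqrt{(L+M)N}\le\sqrt{(L+M)/N}\to 0$, together with a $\log N$-type control of the subleading Stirling corrections, both of which are secured by the hypothesis $(L+M)/N \to 0$.
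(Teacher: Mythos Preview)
Your proposal is correct and follows essentially the same route as the paper: the same action $f(s)$ (the paper writes it as $\tfrac{M}{L+M}s(\log s-1)+\tfrac{L}{L+M}(1-s)(\log(1-s)-1)-s\log|u|^2$), the same saddle-point treatment of the weight after $s\to -sN$, the same invocation of Lemma~\ref{lem-eulmac} for $T_{M,N}$ with the three hypotheses verified exactly as you describe, and the same closing identity $\rho|u|f''(q)=1$. One minor slip: your $\xi$ is off by a factor of $|u|$ (it should be $-(v_1e^{-i\theta}+\bar v_2 e^{i\theta})/(\rho|u|)$, coming from $1/u=e^{-i\theta}/|u|$), but this is cosmetic and the rest of the bookkeeping goes through unchanged.
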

   \begin{proof}
     For convenience,  let
     \begin{align}
       \varphi_{j} = 1 + \frac{v_{j}}{u \rho\sqrt{(L+M)N}}, \quad j=1,2.
     \end{align}

     {\bf Step 1: Asymptotics for the weight function.}
     Let  $s_{0} = q$ and
      \begin{align}
      f(s) = \frac{M}{L+M}s(\log s - 1) + \frac{L}{L+M} (1-s)(\log(1-s) - 1) - s\log\abs{u}^{2}. \label{phere-3-f}
    \end{align}
     By a  change of variables $s\to -sN$ in   \eqref{phereweight} and the Stirling formula,      simple calculation leads to
     \begin{align}
        w(z_{1})
        &= N e^{C_{M,N} - s_{0} (L+M)N \log\abs{\varphi_{1}}^{2} } J, \label{phere-3-weight-0}
    \end{align}
    where \begin{equation}C_{M,N}=(L+M)N f(s_{0})    +  \frac{M}{2} \log\frac{2\pi}{s_0N} + \frac{L}{2} \log(2\pi N(1-s_{0}) ) \end{equation}
    and   for some $\delta_{0} > 0$
    \begin{align}
      J &= \int_{s_{0}-i\infty}^{s_{0}+i\infty} \frac{ds}{2 \pi i} e^{(L+M)N (f(s) - f(s_{0}) - (s-s_{0}) \log\abs{\varphi_{1}}^{2}) - \frac{M}{2} \log\frac{s}{s_{0}}  + \frac{L}{2} \log\frac{1-s}{1-s_{0}}+ \bigO(\frac{L+M}{N})} \nonumber\\
       &=  \Big(\int_{s_0 -i \delta_0}^{s_0 + i \delta_0} \frac{ds}{2 \pi i} +\int_{s_0 - i \infty}^{s_0 -i \delta_0} \frac{ds}{2 \pi i}+ \int_{s_0 +i \delta_0}^{s_0 + i \infty} \frac{ds}{2 \pi i} \Big)\Big(\cdot\Big)=:J_{0}+J_{-}+J_{+}.
    \end{align}

    It is easy to see  that
    \begin{align}
      f'(s) = \frac{M}{L+M} \log s - \frac{L}{L+M} \log(1-s) - \log\abs{u}^{2} = 0
    \end{align}
    has a unique solution $s=s_0$ and with $s=s_{0}+iy$
    \begin{align}
      \frac{\partial}{\partial y} \Re{f (s_0 + iy)}
      &=-\Im f'(s_0 + iy)\\
      &=-\frac{M}{L+M} \arctan \frac{y}{s_0} - \frac{L}{L+M} \arctan\frac{y}{1 - s_{0}}.
    \end{align}
    So $\Re{f (s_0 + iy)}$ with $y\in \mathds{R}$ attains  its  unique  maximum $y=0$, together with the standard steepest decent argument (see e.g. \cite{Wong}) implying that for large $N$ sufficiently
    \begin{align}
      |J_{\pm}|\leq  e^{\frac{1}{2}MN\Re \{f(s_0\pm i\delta_0)-f(s_0)\}}. \label{phere-3-weight-1}
    \end{align}
    Choose a sufficiently small $\delta_0>0$ and  take a Taylor expansion at $s=s_0$, noting $(L+M)/N\to 0$, we have
    \begin{align}
      J_0 &=  \frac{1}{ \sqrt{(L+M)N}}\int_{-i \delta_0 \sqrt{(L+M)N}}^{ i \delta_0 \sqrt{(L+M)N}} \frac{ds}{2 \pi i}
      e^{\frac{1}{2}f''(s_0)s^2 - s  (\frac{v_1}{u\rho}+\frac{\bar{v}_1}{\bar{u}\rho})} \Big(1+ \bigO\Big(\sqrt{\frac{L+M}{N}}\Big)\Big)\nonumber\\
      &=\frac{1}{ \sqrt{2\pi (L+M)N f''(s_{0})} } e^{-\frac{1}{2} (v_1 e^{-i \theta} + \bar{v}_1 e^{i \theta})^{2}} \Big(1+ \bigO\Big(\sqrt{\frac{L+M}{N}}\Big)\Big). \label{phere-3-weight-2}
    \end{align}
    Combination of  \eqref{phere-3-weight-0}, \eqref{phere-3-weight-1} and \eqref{phere-3-weight-2}  gives rise to
    \begin{align}\label{phere-3-weightest}
        w(z_{1}) &=e^{C_{M,N}- s_{0} (L+M)N \log\abs{\varphi_{1}}^{2} }\nonumber\\
        &\times \frac{\sqrt{N}}{ \sqrt{2\pi (L+M) f''(s_{0})} } e^{-\frac{1}{2} (v_1 e^{-i \theta} + \bar{v}_1 e^{i \theta})^{2}} \Big(1+ \bigO\Big(\sqrt{\frac{L+M}{N}}\Big)\Big),
    \end{align}
    uniformly for $v_1,v_2$ in a compact set of $\mathbb{C}$.

    {\bf Step 2: Asymptotics for the finite sum.}
     Let
    \begin{align}
      f_{M,N}(t)
      &=M\big(\log\Gamma(t+1) - t\log(N\abs{u}^{2}) - t \log (\varphi_1 \bar{\varphi}_2)\big)\nonumber\\
      &\quad + L\big(\log\Gamma(N-t) + t \log N - t\log\abs{u}^{2} - t\log (\varphi_1 \bar{\varphi}_2)\big),
    \end{align}
the finite sum is then rewritten as
    \begin{align}
      T_{M,N}
      = \sum_{j=0}^{N-1} e^{-f_{M,N}(j)}.
    \end{align}
       For any $\delta\in (0,q)$,  when  $N$ is sufficiently  large  we have  for $t \in (0, \delta N)$
    \begin{align}
      \Re{f_{M,N}'(t)}
      &= M \big(\psi(t + 1) - \log(N\abs{u}^{2}) - \log \abs{\varphi_1 \bar{\varphi}_2}\big)\nonumber\\
      &\quad + L\big(-\psi(N-t) + \log (N \abs{u}^{2}) - \log \abs{\varphi_1 \bar{\varphi}_2}\big)\nonumber\\
      &<0.
    \end{align}
    This shows that $\Re f_{M,N}(t)$ obtains its minimum at $\delta N$ for $t \in [0, \delta N]$,  so the condition \ref{lem-eulmac-st-1} is satisfied.
Meanwhile, apply   the Stirling formula and we obtain
    \begin{multline}
      f_{M,N}(tN) = (L+M)Nf(t)
       - (L+M)(t-s_{0}) \log ( \varphi_1 \bar{\varphi}_2)
       + \frac{M}{2} \log\frac{t}{s_{0}}  \\- \frac{L}{2} \log\frac{1-t}{1-s_{0}}+C_{M,N}-L\log(N(1-s_0))+M\log(Ns_0)+ \bigO(\frac{L+M}{N})\\
    \end{multline}
    and   \begin{equation}   f_{M,N}'(tN) = (L+M)\big( f'(t) + \bigO(((L+M)N)^{-\frac{1}{2}})\big)  \end{equation}
    uniformly for $t \in [\delta/2,1]$ and $v_1, v_2$ in a compact set of $\mathbb{C}$, which implies the condition \ref{lem-eulmac-st0}.  Recalling  the definition of $f(t)$  in \eqref{phere-3-f}, it is easy to see  that $f'(s_0)=0$ and $t=s_0$ is a unique minimum point over $(\delta,1]$. Besides, \begin{equation}f''(s_{0}) =\frac{M}{L+M}\frac{1}{s_0}+
    \frac{L}{L+M}\frac{1}{1-s_0}
    > 0.\end{equation} Thus the condition  \ref{lem-eulmac-st1} is  also satisfied.

    Now, apply Lemma \ref{lem-eulmac} and
    we obtain asymptotics of the finite sum
    \begin{align}
      T_{M,N}&= e^{-C_{M,N}+L\log(N(1-s_0))-M\log(Ns_0) + s_{0}(L+M)N\log (\varphi_1 \bar{\varphi}_2)}\nonumber\\
      &\times \frac{\sqrt{2\pi N}}{ \sqrt{(L+M) f''(s_{0})}}  e^{\frac{1}{2} (v_1 e^{-i \theta} + \bar{v}_2 e^{i \theta})^{2}} \Big(1+ \bigO\Big(\sqrt{\frac{L+M}{N}}\Big)\Big), \label{phere-3-totalsum}
    \end{align}
    uniformly for $v_1,v_2$ in a compact set of $\mathbb{C}$.

    Putting \eqref{phere-3-weightest} and  \eqref{phere-3-totalsum}  together,   we obtain
    \begin{align}
      K_{M,N}(z_1,z_2)&=  \frac{(N(1-s_{0}))^{L}}{(N s_{0})^{M}}  \Big( \frac{ z_1 \bar{z}_2 }{|z_1 \bar{z}_2|}\Big)^{(L+M)Ns_{0}} \frac{N}{Mf''(s_{0})} \Big(1+ \bigO\Big(\sqrt{\frac{M}{N}}\Big)\Big)\nonumber\\
      &\times  \frac{1}{\pi}  e^{-\frac{1}{2} (|v_1|^2 +|v_2|^2 - v_1 \bar{v}_2) }
    \end{align}
    uniformly for $v_1,v_2$ in a compact set of $\mathds{C}$. Note that $\rho |u|f''(s_0)=1$, the desired result immediately follows.
 \end{proof}

\begin{rem}
  Considering  the mixed products of  truncated unitary matrices and their inverses (cf. \cite{adhikari2013determinantal}), we can prove the  same results  as in Model C for   the fourth product model
  \begin{quote}
{\bf Model D}:  $\Pi_M=(Y_{L} \cdots Y_{1} )^{-1} X_{M}\cdots X_{1}$   where  $Y_1, \ldots, Y_L, X_{1}, \dotsc, X_{M}$ are  i.i.d.  truncated unitary matrices, chosen  from  the  upper left $N\times N$ corner  of a random   unitary  $(N + L) \times (N + L)$ matrix.  \end{quote}
\end{rem}

\section{Concluding remarks} \label{conclusion}

\subsection{Crossovers}

Since  the parameter  $\beta$ via $\gamma$ represents  the   limit of the ratio  $M/N$,   we definitely have reason to believe that  as $\beta\to 0$  the  critical bulk and  edge kernels,  defined in   \eqref{critbulk} and  \eqref{critiedge},    tend to the Ginibre bulk and edge  kernels, while  as  $\beta \to  \infty$
we observe  the Gaussian phenomenon. Indeed, we have
\begin{prop} \label{crossprop}
The following hold  uniformly for $z_1, z_2$ in a compact subset of $\mathds{C}$.
\begin{itemize}
\item[(i)]
  \begin{equation}
    \lim_{\beta \to 0}
    \beta\,
     \mathrm{K^{(bulk)}_{crit}}\big(\beta;  \sqrt{\beta} z_1,  \sqrt{\beta}  z_2\big)
      =  \mathrm{K^{(bulk)}_{Gin}}\big( z_1,  z_2\big) \label{cross-1}
     \end{equation}
and
  \begin{equation}
    \lim_{\beta \to \infty}  \sqrt{\beta}\,
     \mathrm{K^{(bulk)}_{crit}}\big(\beta;  \sqrt{\beta} z_1,  \sqrt{\beta}  z_2\big)
      = \frac{1}{\sqrt{2\pi^3}}   e^{- \frac{1}{2} (|z_1|^2+|\bar{z}_2|^2  + z_1^{2} + \bar{z}_2^2)} ;  \label{cross-2}
     \end{equation}
  \item[(ii)]
   \begin{equation}
    \lim_{\beta \to 0}
    \beta\,
     \mathrm{K^{(edge)}_{\mathrm{crit}}}\big(\beta;  \sqrt{\beta} z_1,  \sqrt{\beta}  z_2\big)
      =  \mathrm{K^{(edge)}_{Gin}}\big( z_1,  z_2\big) \label{cross-3}
     \end{equation}
and
  \begin{equation}
    \lim_{\beta \to \infty}  \sqrt{\beta}\,
     \mathrm{K^{(edge)}_{crit}}\big(\beta;  \sqrt{\beta} z_1,  \sqrt{\beta}  z_2\big)
      = \frac{1}{\sqrt{2\pi^3}}   e^{- \frac{1}{2} (|z_1|^2+|\bar{z}_2|^2  + z_1^{2} + \bar{z}_2^2)} .\label{cross-4}
     \end{equation}
  \end{itemize}
  \end{prop}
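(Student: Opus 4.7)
The plan is to substitute $z_j\mapsto\sqrt\beta\,z_j$ in both critical kernels, simplify the explicit Gaussian prefactor, and then analyze the remaining theta-type series
\[
\Theta_+(\beta,w) := \sum_{j\in\mathbb{Z}} e^{-\tfrac{1}{2}\beta j^{2} - wj}, \qquad \Theta_-(\beta,w) := \sum_{j\ge 0} e^{-\tfrac{1}{2}\beta j^{2} - wj}
\]
at $w=\sqrt{\beta}(z_1+\bar z_2)$, in the two regimes $\beta\to\infty$ and $\beta\to 0$. The algebraic point is that the rescaling collapses the $\beta$ inside the exponent cleanly:
\[
\sqrt\beta\,\mathrm{K}^{(\mathrm{bulk}/\mathrm{edge})}_{\mathrm{crit}}(\beta;\sqrt\beta z_1,\sqrt\beta z_2) = \frac{1}{\sqrt{2\pi^3}}\,e^{-\tfrac{1}{2}(|z_1|^2+|\bar z_2|^2+z_1^2+\bar z_2^2)}\,\Theta_{\pm}\!\bigl(\beta,\sqrt\beta(z_1+\bar z_2)\bigr),
\]
so everything reduces to the asymptotics of $\Theta_\pm$.

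For the limits $\beta\to\infty$ in \eqref{cross-2} and \eqref{cross-4}, the $j=0$ term equals $1$, while for $j\neq 0$ one has the uniform bound $|e^{-\beta j^2/2-\sqrt\beta(z_1+\bar z_2)j}|\le e^{-\beta j^2/2+C\sqrt\beta|j|}$ on compact sets, which is summable and vanishes as $\beta\to\infty$. Thus $\Theta_\pm\to 1$ and the claimed Gaussian limits follow immediately.

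For \eqref{cross-1} ($\beta\to 0$, bulk), I would apply Poisson summation (equivalently, the Jacobi theta identity already quoted in the excerpt) to obtain
\[
\Theta_+(\beta,w) = \sqrt{2\pi/\beta}\,e^{w^{2}/(2\beta)}\sum_{k\in\mathbb{Z}} e^{-2\pi^{2}k^{2}/\beta + 2\pi i k w/\beta}.
\]
At $w=\sqrt\beta(z_1+\bar z_2)$ the real part of the $k$-th exponent is bounded above by $-2\pi^{2}k^{2}/\beta+2\pi C|k|/\sqrt\beta$, which tends to $-\infty$ uniformly for $k\neq 0$, so the dual sum converges to $1$. Combining the factor $\sqrt{2\pi/\beta}\,e^{(z_1+\bar z_2)^2/2}$ with the explicit prefactor $1/\sqrt{2\pi^3\beta}$ produces $1/(\pi\beta)$, and the identity $(z_1+\bar z_2)^2/2 - (z_1^2+\bar z_2^2)/2 = z_1\bar z_2$ collapses the Gaussian exponents to $-\tfrac{1}{2}(|z_1|^2+|z_2|^2-2z_1\bar z_2)$; multiplying by $\beta$ yields precisely $\mathrm{K}^{(\mathrm{bulk})}_{\mathrm{Gin}}(z_1,z_2)$.

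For \eqref{cross-3} ($\beta\to 0$, edge) the Jacobi duality is not directly applicable since the sum is one-sided. Instead, view $\Theta_-$ as a Riemann sum with spacing $h=\sqrt\beta$: writing $g(t)=e^{-t^2/2-(z_1+\bar z_2)t}$,
\[
\Theta_-\!\bigl(\beta,\sqrt\beta(z_1+\bar z_2)\bigr) = \sum_{j\ge 0} g(hj) = h^{-1}\!\left(\int_0^\infty g(t)\,dt + O(h)\right),
\]
with the error controlled via Euler--Maclaurin. A direct Gaussian computation gives
\[
\int_0^\infty g(t)\,dt = \sqrt{\pi/2}\,e^{(z_1+\bar z_2)^2/2}\,\mathrm{erfc}\!\bigl((z_1+\bar z_2)/\sqrt 2\bigr),
\]
and combining with the explicit prefactor exactly as in the bulk case produces $\mathrm{K}^{(\mathrm{edge})}_{\mathrm{Gin}}(z_1,z_2)$ (in the form corresponding to $\theta=0$). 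The principal technical obstacle is ensuring the Riemann-sum estimate holds uniformly in $z_1,z_2$ on compact sets when $z_1+\bar z_2$ is complex and $g$ therefore oscillates; this is handled by observing that the dominant factor $e^{-t^2/2}$ keeps $g$ and all its derivatives in $L^1(\mathbb{R}_+)$ with $L^1$-norms bounded uniformly over the compact set of $(z_1,z_2)$, so the Euler--Maclaurin remainder is $O(\sqrt\beta)$ uniformly, as required.
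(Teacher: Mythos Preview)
Your proposal is correct and follows essentially the same route as the paper: the $\beta\to\infty$ limits are handled by isolating the $j=0$ term, the bulk $\beta\to 0$ limit via the Jacobi/Poisson duality, and the edge $\beta\to 0$ limit by recognizing the one-sided sum as a Riemann sum for $\int_0^\infty e^{-t^2/2-(z_1+\bar z_2)t}\,dt$. Your invocation of Euler--Maclaurin for the uniform error in the Riemann-sum step is in fact more explicit than the paper's own treatment, which simply asserts the convergence.
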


\begin{proof}[Proof of Proposition \ref{crossprop}]   Change  variables and rewrite the kernel as
\begin{multline}
    \sqrt{\beta} \mathrm{K^{(bulk)}_{\mathrm{crit}}}(\beta;\sqrt{\beta} z_1,  \sqrt{\beta}  z_2)=\\ \frac{1}{\sqrt{2 \pi^{3}}}
     e^{- \frac{1}{4} (|z_1|^2+|\bar{z}_2|^2  + z_1^{2} + \bar{z}_2^2)}
    \sum_{j= - \infty}^{\infty} e^{- \frac{1}{2} \beta j^{2} -  \sqrt{\beta}(z_1+ \bar{z}_2)j}.  \label{rescalingcritbulk}   \end{multline}

    We first deal with   part (i).
When  $\beta$  is large sufficiently, we have    $|(z_1+ \bar{z}_2)/\sqrt{\beta}|\leq 1/4$ uniformly  for $z_1, z_2$ in a compact subset of $\mathds{C}$.  Thus,
\begin{align}
  \Big |  \sum_{0\neq j= - \infty}^{\infty} e^{- \frac{1}{2} \beta j^{2} - \sqrt{\beta}  (z_1+ \bar{z}_2)j}\Big |\leq 2\sum_{j= 1}^{\infty} e^{- \frac{1}{2} \beta j (j -\frac{1}{2})}\leq \frac{2}{e^{\frac{\beta}{4}}-1},     \end{align}
  from which  and \eqref{rescalingcritbulk}  as $\beta\to \infty$ we  obtain \eqref{cross-2}.    To prove \eqref{cross-1},
  use the duality formula to rewrite
  \begin{multline}
    \beta \mathrm{K^{(bulk)}_{\mathrm{crit}}}(\beta;\sqrt{\beta} z_1,  \sqrt{\beta}  z_2)=\\ \frac{1}{ \pi}
     e^{- \frac{1}{2} (|z_1|^2+|\bar{z}_2|^2  -2z_1 \bar{z}_2)}
    \sum_{j= - \infty}^{\infty} e^{- \frac{2\pi^2}{ \beta} j^{2} +  \frac{2\pi i}{\sqrt{\beta}}(z_1+ \bar{z}_2)j}.    \end{multline}
As $\beta\to 0$, use the same argument as above we  can prove that the summation on the  right-hand side tends to 1 and we thus obtain  \eqref{cross-1}.

    For part (ii), we can use the same argument as to \eqref{cross-2} to  prove \eqref{cross-4}. To prove \eqref{cross-3},
    rewrite the kernel as
\begin{multline}
    \beta \mathrm{K^{(edge)}_{\mathrm{crit}}}(\beta;\sqrt{\beta} z_1,  \sqrt{\beta}  z_2)=\\ \frac{1}{\sqrt{2 \pi^{3}}}
     e^{- \frac{1}{4} (|z_1|^2+|\bar{z}_2|^2  + z_1^{2} + \bar{z}_2^2)}
    \sum_{j= - \infty}^{\infty}     \sqrt{\beta} e^{- \frac{1}{2} (\sqrt{\beta} j)^{2} -  \sqrt{\beta}j (z_1+ \bar{z}_2)},   \end{multline}
    and treat the summation as an integral over $(0,\infty)$,  as $\beta\to 0$  we have
    \begin{equation}
    \sum_{j= - \infty}^{\infty}     \sqrt{\beta} e^{- \frac{1}{2} (\sqrt{\beta} j)^{2} -  \sqrt{\beta}j (z_1+ \bar{z}_2)} \to
   \int_{0}^{\infty}e^{-\frac{1}{2}t^2-(z_1+ \bar{z}_2)t}dt.  \end{equation}
  Thus   the proof of  \eqref{cross-3} is complete.
\end{proof}

\subsection{Random block  non-Hermitian matrices}   Following \cite{Gud, burda2010spectrum,AB12},
the matrix product  $ \Pi_{M}^{}=X_{M}\cdots X_{2}X_{1}$  admits a  linearization (see also \cite{Mingo-Speicher17})  \begin{equation}
Y=\left(
\begin{array}{cccccc}
 0& X_{M-1}& 0& 0 &\cdots  & 0\\
0& 0&  X_{M-2}& 0 &\cdots  & 0\\
0& 0&  0& X_{M-3} &\cdots& 0\\
\vdots & \vdots & \vdots  & \vdots & \ddots & \vdots \\
0& 0&  0 & 0 &\cdots& X_{1}\\
X_{M} & 0&  0 & 0 &\cdots  &0
\end{array}
\right ).
\end{equation}
Here  the  cyclic block matrix $Y$  is constructed from $X_1, \ldots, X_M$  which   are placed in a cyclic positions
of  a sparse $MN\times MN$ matrix.
Note that  $\det(zI_{MN} -Y)=\det(z^M I_{N} -X_M \cdots X_1)$,  we see that all $MN$  eigenvalues of $Y$ can be read off from $N$ eigenvalues  of  the product $ \Pi_{M}^{}$  by taking the $M$-th roots for each one. Thus,  we guess that  eigenvalues of $Y$ have similar local statistical properties as stated in    Section \ref{sect1.4}.   We will come back to this question in the future.

As  non-Hermitian  analogies  of
 random (Hermitian) band matrices, random non-Hermitian band matrices are relatively less studied.  Although there is  a lot of literature   available about  random band matrices,  the local  bulk statistics  transition   from Poisson to  GOE/GUE, which is conjectured that a phase transition occurs as the bandwidth  $W \propto \sqrt{N}$, remains a major open problem; see   \cite{Shch} and references therein.   As to the  non-Hermitian case,  say  $Y$, the key results in  Section \ref{sect1.4}  provide compelling evidence since  the bandwidth  $M \propto \sqrt{MN}$ is equivalent to   $M \propto N$. So we conjecture  that  the critical phenomena  in Theorem \ref{ginibre-2} characterize the crossover in  random non-Hermitian  complex band  matrices.

\subsection{Open questions}  As mentioned in the  Introduction, there are     a few     primary  problems on eigenvalues of products of non-Hermitian random matrices, in which  local universality problem seems particularly outstanding.
We believe that our method can be generalized to deal with  products of rectangular complex Ginibre matrices and products of truncated unitary matrices from unitary matrices of different  size,  under certain assumptions on the growth of matrix sizes; see \cite{LWW} for a similar treatment on singular values.     But the other problems may be  challenging.
Here we list some   to conclude this last section.

\begin{que}
  Consider the product of i.i.d. real/quaternion Gaussian random   matrices (or truncated orthogonal/symplectic matrices) and prove a similar phase transition under  the same rescalings; cf. \cite{forrester16}.
\end{que}

Recently, a class of random matrix ensembles  is   introduced in \cite{FKK}  and is named after P\'{o}lya  ensembles because of their relations  with P\'{o}lya frequency functions.   These ensembles unify almost all  exact  complex matrix ensembles and have lots of similar structures with complex Ginibre ensembles.   The second question is
\begin{que}   Verify Theorems \ref{ginibre-1}--\ref{ginibre-3} or find new local statistics for products of i.i.d. P\'olya ensembles; cf. \cite{KK19,FKK}.
\end{que}

\begin{que}
  Verify Theorems \ref{ginibre-1}--\ref{ginibre-3} for  complex eigenvalues products  of non-Hermitian random matrices with i.i.d. complex entries     under certain higher moment assumptions.
\end{que}

\section*{Acknowledgement}



We acknowledge support by the National Natural Science Foundation of China \#11771417, the Youth Innovation Promotion Association CAS \#2017491 (D.-Z. Liu), and by   the National Natural Science Foundation of China \#11901161 (Y. Wang).



\begin{thebibliography}{100}


\bibitem{adhikari2013determinantal}
K. Adhikari, N. K. Reddy, T. R. Reddy, and K. Saha. Determinantal point processes in the plane from products of random matrices.  \newblock {\em Ann. Inst. Henri Poincar\'{e} Probab. Stat.}, 52(1):16–46, 2016.

\bibitem{Ahn} A. Ahn.  Fluctuations of $\beta$-Jacobi Product Processes.  ArXiv:1910.00743.


%

\bibitem{Akemann-Baik-Di_Fransesco11}
G.~Akemann, J.~Baik, and P.~Di~Francesco.
\newblock {\em The Oxford handbook of random matrix theory}.
\newblock Oxford University Press, Oxford, 2011.






%
%
%









\bibitem{AB12}
G.~Akemann, Z.~Burda.
\newblock Universal microscopic correlation functions for products of independent Ginibre matrices. \newblock {\em J. Phys. A}, 45 (2012),  465201.


\bibitem{ABKN14}
G.~Akemann, Z.~Burda, M. Kieburg, and T. Nagao.
\newblock
Universal microscopic correlation functions for products of truncated unitary matrices.
\newblock {\em J. Phys. A: Math. Theor.},  47 (2014),  255202
\bibitem{Akemann-Burda-Kieburg14}
G.~Akemann, Z.~Burda, and M.~Kieburg.
\newblock Universal distribution of Lyapunov exponents for products of
  Ginibre matrices.
\newblock {\em J. Phys. A: Math. Theor.}, 47(39):395202, 35, 2014.

\bibitem{Akemann-Burda-Kieburg18}
G.~Akemann, Z.~Burda, and M.~Kieburg.
\newblock From integrable to chaotic systems: Universal local statistics of
  Lyapunov exponents.
\newblock  Europhysics Letter, 126 (4), 40001: p1-p7, 2019.

\bibitem{Akemann-Ipsen15}
G.~Akemann,   J.~R. Ipsen.
\newblock Recent exact and asymptotic results for products of independent
  random matrices.
\newblock {\em Acta Phys. Polon. B}, 46(9):1747--1784, 2015.






\bibitem{bai1997circular}
Z. D.~Bai.
\newblock Circular law.
\newblock {\em  Ann.  Probab.}, 25(1):494--529, 1997.


\bibitem{Bellman54}
R.~Bellman.
\newblock Limit theorems for non-commutative operations. {I}.
\newblock {\em Duke Math. J.}, 21:491--500, 1954.


\bibitem{BS09} A. Borodin,  C.  D. Sinclair. The Ginibre ensemble of real random matrices and its scaling limits. Comm. Math. Phys.,  291,:177-224, 2009.
%

\bibitem{burda2010spectrum}
Z.~Burda, R. A.~Janik, and B.~Waclaw.
\newblock Spectrum of the product of independent random gaussian matrices.
\newblock {\em Phys. Rev. E}, 81(4):041132, 2010.

\bibitem{burda2010eigenvalues}
Z.~Burda, A.~Jarosz, G.~Livan, M. A.~Nowak, and A.~Swiech.
\newblock Eigenvalues and singular values of products of rectangular gaussian
  random matrices.
\newblock {\em Phys. Rev. E}, 82(6):061114, 2010.


\bibitem{Bougerol-Lacroix85}
P.~Bougerol,  J.~Lacroix.
\newblock {\em Products of random matrices with applications to {S}chr\"odinger
  operators}, volume~8 of {\em Progress in Probability and Statistics}.
\newblock Birkh\"auser Boston, Inc., Boston, 1985.

\bibitem{burda2013free}
Z. Burda.
\newblock Free products of large random matrices--a short review of recent
  developments.
\newblock In {\em Journal of Physics: Conference Series}, volume 473, page
  012002. IOP Publishing, 2013.
\bibitem{CLQ}
S. Chang, D. Li, and Y. Qi. Limiting distributions of spectral radii for product of matrices from the spherical ensemble. \newblock {\em J. Math. Anal. Appl.}, 461(2), 1165–1176.   


\bibitem{Cohen-Kesten-Newman86}
J.~E. Cohen, H.~Kesten, and C.~M. Newman.
\newblock {\em Random matrices and their applications}, volume~50 of {\em
  Contemporary Mathematics}. American Mathematical Society, Providence, RI,
  1986.


\bibitem{Crisanti-Paladin-Vulpiani93}
A.~Crisanti, G.~Paladin, and A.~Vulpiani.
\newblock {\em Products of random matrices in statistical physics}, volume 104
  of {\em Springer Series in Solid-State Sciences}.
\newblock Springer-Verlag, Berlin, 1993.

\bibitem{Deift17}
P.~Deift.
\newblock Some open problems in random matrix theory and the theory of
  integrable systems. {II}.
\newblock {\em SIGMA 
}13: Paper No.
  016, 23, 2017.





\bibitem{Forrester10}
P. J. Forrester.
\newblock {\em Log-gases and random matrices}, volume~34 of {\em London
  Mathematical Society Monographs Series}.
\newblock Princeton University Press, Princeton, NJ, 2010.





\bibitem{Forrester13}
P. J. Forrester.
\newblock Lyapunov exponents for products of complex {G}aussian random
  matrices.
\newblock {\em J. Stat. Phys.}, 151(5):796--808, 2013.

\bibitem{Forrester15}
P. J. Forrester.
\newblock Asymptotics of finite system Lyapunov exponents for some random
  matrix ensembles.
\newblock {\em  J. Phys. A: Math. Theor.}, 48(21):215205, 17, 2015.



\bibitem{forrester1999exact}
P. J.~Forrester, G~Honner.
\newblock Exact statistical properties of the zeros of complex random
  polynomials.
\newblock {\em   J. Phys. A: Math. Theor.}, 32(16):2961,
  1999.

\bibitem{forrester16}
P. J. Forrester, I.R. Ipsen. Real eigenvalue statistics for products of asymmetric real Gaussian matrices.
Linear Algebra Appl.,   510: 259-290, 2016.

\bibitem{FKK}
Y.-P. F\"{o}rster, M. Kieburg, H. K\"{o}sters.
Polynomial ensembles and P\'olya frequency functions.
 ArXiv:1710.08794.




\bibitem{Furstenberg-Kesten60}
H.~Furstenberg, H.~Kesten.
\newblock Products of random matrices.
\newblock {\em Ann. Math. Statist.}, 31:457--469, 1960.


\bibitem{ginibre1965statistical}
J.~Ginibre.
\newblock Statistical ensembles of complex quaternion and real matrices.
\newblock {\em J.  Math.  Phys.}, 6(3):440--449, 1965.

\bibitem{CES}
G.  Cipolloni, L.  Erd\H{o}s, D. Schr\H{o}der.
Edge universality for non-Hermitian random matrices.
ArXiv:1908.00969v2

\bibitem{girko1985circular}
V. L. ~Girko.
\newblock Circular law.
\newblock {\em Theory Probab. Appl.}, 29(4):694--706,
  1985.


\bibitem{Gol}
I. Goldhirsch, P.-L. Sulem, and S. A. Orszag.  Stability and Lyapunov stability of dynamical systems: a differential approach and a numerical method.  \newblock {\em Phys. D},  27, no. 3: 311-37.

\bibitem{GNT}
F. G\'{o}tze, A. Naumov, and A. Tikhomirov. On local laws for non-hermitian random matrices
and their products. ArXiv:1708.06950.




\bibitem{GS} V. Gorin, Y. Sun. Gaussian fluctuations for products of random matrices.  ArXiv:1812.06532.


\bibitem{gotze2010circular}
F. G{\"o}tze, A. Tikhomirov.
\newblock The circular law for random matrices.
\newblock {\em  Ann.  Probab.} 38(4):1444--1491, 2010.
\bibitem{gotze2010asymptotic}
F. G{\"o}tze, A. Tikhomirov.
\newblock On the asymptotic spectrum of products of independent random
  matrices.
   ArXiv:1012.2710.



\bibitem{gotze2015Asymptotic}
F.~G{\"o}tze, H.~K{\"o}sters, and A.~Tikhomirov.
\newblock Asymptotic spectra of matrix-valued functions of independent random
  matrices and free probability.
\newblock {\em Random Matrices Theory Appl.}, 4(2):1550005 (85 pages), 2015.

\bibitem{Gud} E.  Gudowska-Nowak,  R. A. Janikb,  J.  Jurkiewicz, M. A.Nowak.
Infinite products of large random matrices and matrix-valued diffusion.
{\em  Nucl. Phys. B}, 670, no. 3, 479-507.


\bibitem{Gui} Y. Guivarc'h. Produits de matrices al\'eatoires et applications aux propri\'et\'es g\'eom\'etriques des sous-groupes du groupe lin\'eaire.  \newblock {\em Ergodic Theory Dynam. Systems} 10, no. 3: 483-512, 1990.

 

 \bibitem{HN}
B. Hanin, M. Nica. Products of many large random matrices and gradients in deep
neural networks.  ArXiv:1812.05994.

\bibitem{HKPV}
J.B. Hough, M. Krishnapur, Y. Peres,  and B. Vir\'{a}g. \newblock {\em Zeros of Gaussian Analytic Functions and Determinantal Point Processes}. American Mathematical Society, Providence, RI,  2009

\bibitem{Ipsen15}
J.~R. Ipsen.
\newblock Lyapunov exponents for products of rectangular real, complex and
  quaternionic {G}inibre matrices.
\newblock {\em J. Phys. A}, 48(15):155204, 18, 2015.

\bibitem{Ipsen-Kieburg14}
J.~R. Ipsen,  M.~Kieburg.
\newblock Weak commutation relations and eigenvalue statistics for products of
  rectangular random matrices.
\newblock {\em Phys. Rev. E}, 89(3):032106, 20, 2014.

%


\bibitem{JQ17} T. Jiang, Y. Qi.  Spectral radii of large non-Hermitian random matrices.  \newblock {\em J. Theor. Probab.} 30, no. 1: 326-364, 2017.

\bibitem{JQ19} T. Jiang, Y. Qi.
Empirical distributions of eigenvalues of product ensembles. \newblock {\em J. Theor. Probab. } 32, no.1:353-394, 2019.



\bibitem{Kargin14}
V.~Kargin.
\newblock On the largest {L}yapunov exponent for products of {G}aussian
  matrices.
\newblock {\em J. Stat. Phys.}, 157(1):70--83, 2014.


\bibitem{khoruzhenko2011}
B. A. Khoruzhenko,  H. J. Sommers.
\newblock Non-Hermitian random matrix ensembles, in The Oxford Handbook
of Random Matrix Theory (editors, G. Akemann, J. Baik and P. Di Francesco).
\newblock Oxford University Press, 2011.

\bibitem{KSZ10}
B. A. Khoruzhenko,  H. J. Sommers,  and K. Zyczkowski.  Truncations of random orthogonal matrices.  \newblock {\em Phys. Rev. E} 82,  no. 4 (2010): 040106.


\bibitem{KK19}  M. Kieburg,  H. K\"{o}sters.  Products of Random Matrices from Polynomial Ensembles.
\newblock {\em Ann. Inst. H. Poincar\'{e} Probab. Statist.} 55 (2019), 98–126.




\bibitem{Kri}
M.  Krishnapur. From random matrices to random analytic functions.  \newblock {\em Ann. Probab.} 37(1): 314-346, 2009.


\bibitem{KOV18}
P. Kopel,  S.  O'Rourke, V. Vu.  Random matrix products: Universality and least singular values.  ArXiv:1802.03004.



%

 \bibitem{Lampret} V. Lampret.
 The Euler-Maclaurin and Taylor Formulas: Twin, Elementary Derivations. Mathematics Magazine, 74: 2(2001), 109-122.




\bibitem{LWW}
D.-Z. Liu,  D. Wang, and Y. Wang. Lyapunov exponent, universality and phase transition for products of random matrices. ArXiv:1810.00433v2.

\bibitem{LiuWang16} D.-Z. Liu, Y.~Wang. Universality for products of random matrices I: Ginibre and truncated unitary cases. {\em Int. Math. Res. Not. IMRN} 2016, No. 11, 3473--3524, 2016.

%

\bibitem{Mehta04}
M.~L. Mehta.
\newblock {\em Random matrices}, volume 142 of {\em Pure and Applied
  Mathematics}.
\newblock Elsevier/Academic Press, Amsterdam, 3rd edition, 2004.

\bibitem{Mingo-Speicher17}
J.~A. Mingo, R.~Speicher.
\newblock {\em Free probability and random matrices}, volume~35 of {\em Fields
  Institute Monographs}.
\newblock Springer, New York, 2017.

\bibitem{Mum} D. Mumford.   \newblock {\em Tata Lectures on Theta I}. Birkh\H{a}user Boston, 2007.


\bibitem{Nem17}
Y. Nemish. Local law for the product of independent non-Hermitian random matrices with
independent entries. {\em Electron. J. Probab.}, 22: Paper No. 22, 35, 2017.


\bibitem{Newman86}
C.~M. Newman.
\newblock The distribution of  Lyapunov exponents: exact results for random
  matrices.
\newblock {\em Comm. Math. Phys.}, 103(1):121--126, 1986.

\bibitem{Olver10}
F.~W.~J. Olver, D.~W. Lozier, R.~F. Boisvert, and C.~W. Clark, editors.
\newblock {\em N{IST} handbook of mathematical functions}.
\newblock 
   Cambridge University Press, Cambridge, 2010.


\bibitem{o2011products}
S.  O'Rourke, A.  Soshnikov.
\newblock Products of independent non-hermitian random matrices.
\newblock {\em Electron. J. Probab}, 16(81):2219--2245, 2011.



\bibitem{Oseledec68}
V.~I. Oseledets.
\newblock A multiplicative ergodic theorem. {C}haracteristic {L}japunov,
  exponents of dynamical systems.
\newblock {\em Trudy Moskov. Mat. Ob\v{s}\v{c}.}, 19:179--210, 1968.

\bibitem{pan2010circular}
G. Pan,   W.  Zhou.
\newblock Circular law, extreme singular values and potential theory.
\newblock {\em  Journal of Multivariate Analysis}, 101(3):645--656, 2010.

\bibitem{QX}
Y. Qi,  M. Xie.  Spectral radii of products of random rectangular matrices.  To appear in J. Theor. Probab.
 (2019). https://doi.org/10.1007/s10959-019-00942-9.


%
\bibitem{Raghunathan79}
M.~S. Raghunathan.
\newblock A proof of {O}seledec's multiplicative ergodic theorem.
\newblock {\em Israel J. Math.}, 32(4):356--362, 1979.

\bibitem{Reddy16}
N.~K. Reddy.
\newblock Equality of Lyapunov and stability exponents for products of isotropic random matrices.
\newblock {\em Int. Math. Res. Not.  IMRN }2019 (2): 606-624, 2019.

\bibitem{Shch}
M. Shcherbina, T. Shcherbina.
Transfer operator approach to 1d random band matrices.
In Proceedings of the International Congress of Mathematicians 2018,  Vol. 2.


\bibitem{Tao-Vu11a}
T.~Tao, V.~Vu.
\newblock Random matrices: universality of local eigenvalue statistics.
\newblock {\em Acta Math.}, 206(1):127--204, 2011.

\bibitem{tao2008random}
T. Tao, V. Vu.
\newblock Random matrices: the circular law.
\newblock {\em Commun. Contemp. Math.}, 10(02):261--307,
  2008.

\bibitem{tao2015random}
T. Tao, V. Vu.
\newblock Random matrices: universality of local spectral statistics of
  non-hermitian matrices.
\newblock {\em  Ann. Probab.}, 43(2):782--874, 2015.

\bibitem{tao2010random2}
T. Tao, V. Vu,  and M. Krishnapur.
\newblock Random matrices: Universality of ESDS and the circular law.
\newblock {\em  Ann.  Probab.}, 38(5):2023--2065, 2010.



\bibitem{Tulino-Verdu04}
A.~M. Tulino,  S.~Verd{\'u}.
\newblock Random matrix theory and wireless communications.
\newblock {\em Found. Trends Commun. Inform. Theory}, 1(1):1--182, 2004.

\bibitem{Viana14}
M.~Viana.
\newblock {\em Lectures on {L}yapunov exponents}, volume 145 of {\em Cambridge
  Studies in Advanced Mathematics}.
\newblock Cambridge University Press, Cambridge, 2014.

\bibitem{Wilkinson17}
A.~Wilkinson.
\newblock What are {L}yapunov exponents, and why are they interesting?
\newblock {\em Bull. Amer. Math. Soc.}, 54(1):79--105, 2017.

\bibitem{Wong}
R. Wong. Asymptotic Approximations of Integrals, vol. 34. Philadelphia, PA: SIAM, 2001.

\bibitem{zyczkowski2000truncations}
K. \.{Z}yczkowski, H.-J. Sommers.
\newblock Truncations of random unitary matrices.
\newblock {\em J. Phys. A: Math. Theor.}, 33(10):2045,
  2000.







%
%





%


%
%







%
%
%






\end{thebibliography}
\end{document}